\documentclass[11pt]{article}
\usepackage{authblk}

\usepackage[margin=1in]{geometry}
\usepackage[T1]{fontenc}
\usepackage[utf8]{inputenc}
\usepackage{amsmath,amssymb,amsthm,mathtools,mathrsfs}
\usepackage[colorlinks=true,linkcolor=blue,citecolor=blue,urlcolor=blue]{hyperref}
\usepackage{enumitem}
\usepackage{needspace}

\newtheorem{theorem}{Theorem}[section]
\newtheorem{lemma}[theorem]{Lemma}
\newtheorem{proposition}[theorem]{Proposition}
\newtheorem{corollary}[theorem]{Corollary}
\theoremstyle{definition}
\newtheorem{definition}[theorem]{Definition}
\theoremstyle{remark}
\newtheorem{remark}[theorem]{Remark}

\DeclareMathOperator{\Sur}{Sur}
\DeclareMathOperator{\Aut}{Aut}
\DeclareMathOperator{\End}{End}
\DeclareMathOperator{\Hom}{Hom}
\DeclareMathOperator{\GL}{GL}
\DeclareMathOperator{\Out}{Out}
\DeclareMathOperator{\Inn}{Inn}
\DeclareMathOperator{\Sym}{Sym}
\DeclareMathOperator{\soc}{soc}
\DeclareMathOperator{\Frat}{Frat}

\DeclareMathOperator{\Gen}{Gen}

\newcommand{\F}{\mathbb F}
\newcommand{\Q}{\mathbb Q}
\newcommand{\Z}{\mathbb Z}
\newcommand{\E}{\mathbb E}
\newcommand{\Gp}{\mathcal G}
\newcommand{\calP}{\mathcal P}

\newcommand{\barS}{\overline{\mathcal S}}
\newcommand{\wideF}{\widehat F}
\newcommand{\Prof}{\operatorname{Prof}}

\title{\vspace{-\baselineskip}\LARGE\sffamily\bfseries
High Rank and Multiplicity in Random and Perfect Profinite Groups}

\author[1]{Carlo Pagano\thanks{Department of Mathematics
and Statistics, Concordia University, Quebec H3G 1M8,
Canada,
\href{mailto:carlein90@gmail.com}{carlein90@gmail.com};
Google DeepMind,
\href{mailto:carlopagano@google.com}{carlopagano@google.com}.}}

\author[2]{Mark Shusterman\thanks{The Dr. A. Edward
Friedmann Career Development Chair in Mathematics at the
Faculty of Mathematics and Computer Science, Weizmann
Institute of Science, 234 Herzl Street, Rehovot 76100,
Israel.}}

\affil[1]{Concordia University and Google DeepMind}
\affil[2]{Weizmann Institute of Science}
\date{\today}

\begin{document}
\maketitle

\begin{abstract}
We prove that almost sure topological finite generation holds for a general class of random models of profinite groups. We deduce that in the models introduced by Liu--Wood and Sawin--Wood, one has that finite presentation holds almost surely, with almost sure control of the deficiency in the presentation. In particular this settles questions raised in \cite{LW} and \cite{SawinWood3M}.

Using the same underlying principle, we show that there exists a unique universal $d$-generated perfect profinite group: its finite quotients are precisely the finite $d$-generated perfect groups. This settles Conjecture B1 posed by Nikolov in \cite{NikolovFiniteImages}. We show in addition that this group is projective and admits a profinite presentation with $d$ generators and $d$ relations, and with no fewer relations on $d$ generators.

The main underlying theme is to bring in an insight from crown theory: groups with high rank are always witnessed by a crown with large multiplicity. 

This approach was discovered independently by \emph{ChatGPT5.5 pro} and \emph{Aletheia}, an internal agent at Google DeepMind.
\end{abstract}

\section{Introduction}\label{sec:introduction}
The rich web of analogies and dictionaries between number fields, algebraic curves and $3$-manifolds is pervasive in modern number theory. Recently, these dictionaries have found particularly fertile grounds in the context of random profinite group models and their applications to arithmetic statistics, in a remarkable series of works of Sawin--Wood \cite{SawinWood3M,SawinWoodMoment,SawinWoodDistributions}, building on previous work of Liu--Wood \cite{LW} and Liu--Wood--Zureick-Brown \cite{LWZB}. Such random groups are used to provide heuristic models for the \'etale fundamental group $\pi_1^{\text{ét}}(\text{Spec}(\mathcal{O}_K))$ when $K$ varies in natural families of number fields. Grounding evidence for these heuristics can be provided for curves over finite fields, thanks to the groundbreaking work of Ellenberg--Venkatesh--Westerland \cite{EllenbergVenkateshWesterland} and the recent breakthrough of Landesmann--Levy \cite{LandesmanLevyMoments,LandesmanLevyStability}. The dependency of these heuristics on the base field leads to a plethora of random group models. It is precisely exploring the analogy with $3$-manifolds that, in the hands of Sawin--Wood \cite{SawinWood3M,SawinWoodDistributions}, the literature has arrived at a convincing model for completely general base number/function fields.

Despite the compelling evidence over function fields, very little is known about $\pi_1^{\text{\'et}}(\text{Spec}(\mathcal{O}_K))$, when $K$ is a number field. A difficult conjecture of Shafarevich \cite[p.~167]{ShafarevichNumberFields}, also motivated by the analogy with algebraic curves, postulates that this is a topologically finitely generated group. Hence it is a natural question whether the random group models produce finitely generated groups almost surely. This question has been raised by Liu--Wood \cite{LW} and by Sawin--Wood \cite{SawinWood3M} for their respective models, as we shall recall more precisely below.

Our first main result settles both cases. With an eye towards future applications, we provide a general statement that can be applied to any random group model as soon as it satisfies a certain growth condition for its so-called \emph{moments}. A moment is the average value of $|\text{Epi}_{\text{top.gr.}}(\mathcal{G},H)|$, where $H$ is any given finite group. One of the key insights, ubiquitous in many works in this area and championed in \cite{SawinWood3M,SawinWoodMoment}, is that these moments provide a systematic way to study our random group model (and they are often sufficiently powerful to encode the whole distribution).

Let $(\Omega,\mathcal{B},\mu)$ be a probability space whose points are profinite groups and such that the functions $\mathcal{G} \mapsto |\text{Epi}_{\text{top.gr.}}(\mathcal{G},H)|$ are measurable and there exist constants $\alpha,\delta\geq 0$ with
$$\sup_{H} \frac{1}{|H|^{\alpha} \cdot |H_2(H,\Z)|^{\delta}} \int_{\Omega} |\text{Epi}_{\text{top.gr.}}(\mathcal{G},H)| \, d\mu < \infty,
$$
where $H$ ranges over all finite groups. We call such a space a \emph{decent space} of random groups. For a profinite group $\mathcal{G}$, we denote by $d(\mathcal{G})$ the infimum among all of the cardinals of the subsets of $\mathcal{G}$ that are topologically generating $\mathcal{G}$.
\begin{theorem} \label{thm:finite-generation}
Let $(\Omega,\mathcal{B},\mu)$ be a decent space of profinite groups. Then the set $\{\mathcal{G} \in \Omega :d(\mathcal{G})<\infty\}$ is measurable, i.e. it is an element of $\mathcal{B}$, and furthermore
$$
\mu(\{\mathcal{G} \in\Omega: d(\mathcal{G})<\infty\})=1,
$$
that is, an element $\mathcal{G}$ of $\Omega$ is topologically finitely generated almost surely.
\end{theorem}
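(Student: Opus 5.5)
The plan is to reduce finite generation to the non-existence of certain finite quotients, and then bound the probability of those quotients by the moment hypothesis together with a first-moment (Markov) argument. Two standard facts drive this. First, for a profinite group $\mathcal G$ one has $d(\mathcal G)=\sup_H d(H)$, where $H$ ranges over the finite continuous quotients of $\mathcal G$; this follows from an inverse limit (compactness) argument on the nonempty finite sets of generating $d$-tuples. Second, crown theory (Dalla Volta--Lucchini) says that if a finite group $H$ has $d(H)>\max(d_0,2)$, then $H$ surjects onto a crown-based power $L_k$ with $d(L_k)=d(H)$. Here $L$ is a monolithic primitive group with socle $N$, and $L_k$ is the subdirect product of $k$ copies of $L$ amalgamated over $L/N$.

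Measurability comes first. For each $d_0$ the set $\{d(\mathcal G)\le d_0\}$ equals $\bigcap_H\{|\mathrm{Epi}(\mathcal G,H)|=0\}$, with $H$ running over the countably many isomorphism classes of finite groups with $d(H)>d_0$. Taking the union over $d_0$ shows that $\{d(\mathcal G)<\infty\}$ lies in $\mathcal B$. Moreover, its complement is contained, for every $d_0$, in $E_{d_0}:=\bigcup\{\mathcal G:\mathrm{Epi}(\mathcal G,L_k)\neq\emptyset\}$, where the union is over pairs $(L,k)$ with $d(L_k)>d_0$. It therefore suffices to show that $\mu(E_{d_0})\to0$ as $d_0\to\infty$.

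For the probability estimate, note that whenever $\mathrm{Epi}(\mathcal G,L_k)\neq\emptyset$ we have $|\mathrm{Epi}(\mathcal G,L_k)|\ge|\Aut(L_k)|$. Markov's inequality and the decency hypothesis then give
$$\mu(\mathrm{Epi}(\mathcal G,L_k)\ne\emptyset)\le C\,\frac{|L_k|^{\alpha}|H_2(L_k,\Z)|^{\delta}}{|\Aut(L_k)|}.$$
The key point is that $\Aut(L_k)$ contains the permutations of the $k$ factors, so $|\Aut(L_k)|\ge k!$. On the other side, $|L_k|=|L/N|\,|N|^k$, and $|H_2(L_k,\Z)|\le c(L)^k$ by a Künneth/Lyndon--Hochschild--Serre estimate (the Schur multiplier grows at most exponentially in $k$). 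Each term is therefore at most $C\,|L|^{O(k)}/k!$. Crown theory also makes "high rank" quantitative via the formula for $d(L_k)$. In the abelian case $N\cong\F_p^m$ with $L=N\rtimes H$ and $H\le\GL_m(p)$ irreducible, $d(L_k)>d_0$ forces $k\gtrsim (d_0-d(H)-1)\cdot\dim_{\End_H(N)}N$. In the nonabelian case, $d(L_k)\approx 2+\log k/\log|N|$, so $k\ge|N|^{d_0-3}$. In both regimes, $k$ is large compared with $\log|L|$ once $d_0$ is large, so $k!$ dominates and each term decays superexponentially in $k$.

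The main obstacle is summing these bounds over all monolithic $L$. I would organize the sum by $|L|$, using a polynomial-type bound $|L|^{O(\log |L|)}$ on the number of monolithic primitive groups of a given order; this bound comes from their being $2$-generated up to bounded index, or via Pyber-type counts of groups with given socle. I would then check that $|L|^{O(\log|L|)}\cdot|L|^{O(k)}/k!$ is summable over $L$ and over $k\ge k_0(L,d_0)$, with a total tending to $0$ as $d_0\to\infty$. The delicate regime is the abelian one where $d(H)$ itself is large: in that case I would first peel off the quotient $L/N\cong H$, whose contribution is controlled by induction on the rank threshold, so that only $L$ with $d(L/N)\le d_0/2$ need the $k!$ bound. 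If that counting step fails, my fallback is to bound instead the probability that $\mathcal G$ surjects onto the product $N^{k}$ twisted by $L/N$, using a different choice of large automorphism group, namely $\End_{L/N}(N)^{\times}$-scalings on each factor.
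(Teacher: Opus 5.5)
Your reduction matches the paper: measurability, crown-based witnesses, and the bound $\mu(E(L_k))\le M|L_k|^\alpha|H_2(L_k,\Z)|^\delta/|\Aut(L_k)|$ from the free action of $\Aut(L_k)$. Granted $d(L)\le d_0$ (see below), so does your nonabelian-type analysis. There $k=f_L(d_0)>|N|^{d_0-2}$ by Lucchini--Thakkar, $|H_2(L_k,\Z)|\le|K|^{d(K)}|N|^k$ is genuinely exponential in $k$, and $k!$ swamps everything even with a crude count of the $L$'s.

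\textbf{The gap is in the abelian-type case.} Write $L=V\rtimes K$, $D=\End_K(V)\cong\F_q$ and $b=\dim_DV$. Your claim that $k!$ dominates fails there for two independent reasons.

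First, $k=f_L(d_0)=b(d_0-\theta)-\dim_DH^1(K,V)+1$ depends on $d_0$ and $b$ but not on $q$. So $k$ is \emph{not} large compared with $\log|L|$. For $L=C_p$ one has $k=d_0+1$ for every prime $p$, and your bound sums $Mp^{\alpha k}|H_2(C_p^k,\Z)|^\delta/k!$ over all primes, which diverges already when $\alpha=\delta=0$.

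Second, the bound $|H_2(L_k,\Z)|\le c(L)^k$ is false. $H_2(C_p^k,\Z)=\wedge^2\F_p^k$ has order $p^{\binom k2}$. More generally, the cross terms $V_i\otimes V_j$ of $\wedge^2(V^k)$ survive in the coinvariants whenever $V$ is self-dual; for instance $H_2(\F_p^k\rtimes C_2,\Z)\cong\wedge^2\F_p^k$ when $C_2$ acts by $-1$ and $p$ is odd. So for $\delta>0$ the numerator grows like $q^{\delta k^2/2}$, and no factorial can absorb it.

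Your fallback, the monomial group $D^\times\wr\Sym_k$ of order $(q-1)^kk!$, repairs neither defect once $\alpha\ge1$ or $\delta>0$. That range includes the Liu--Wood laws with $u\le-1$ and the Sawin--Wood law ($\alpha=\delta=1$).

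\textbf{The missing idea} is that $L_k=V^k\rtimes K\cong(V\otimes_DD^k)\rtimes K$ is acted on by all of $\GL_k(D)$, which mixes the factors. Hence $|\Aut(L_k)|\ge c_0q^{k^2}$. This saving grows with $q$ as well as with $k$, which is exactly what makes the sums over $q$ and $b$ converge. To exploit it you need three further inputs.

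\begin{enumerate}
\item Take as witness a quotient of \emph{least order} with $d>d_0$, rather than one with $d(L_k)=d(H)$. Dalla Volta--Lucchini then gives $L_{f_L(d_0)}$ with $d(L)\le d_0$, so $d(K)\le d_0$ and your ``delicate regime'' never arises. Your proposed induction could not handle that regime anyway: an inequality $\mu(E_{d_0})\le\varepsilon_{d_0}+\mu(E_{\lfloor d_0/2\rfloor})$ never forces $\mu(E_{d_0})\to0$.
\item You need a lower bound on $f_L(d_0)$ that does not degrade as $d(K)$ approaches $d_0$; the cocycle bound only gives $b(d_0-d(K))+1$. The paper uses Guralnick--Hoffman, $\dim H^1(K,V)\le\frac12\dim V$, to get $f_L(m)>b(m-2)$. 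It then counts the $K\le\GL_b(q)$ with $d(K)\le m$ by generating tuples, at most $q^{mb^2}$, which is beaten by $q^{-b^2(m-2)^2}$.
\item Because $|H_2|^\delta$ contributes $q^{\delta k^2/2}$, even $q^{k^2}$ is insufficient once $\delta\ge2$. The paper then amplifies: $|\Sur(L_t,L_s)|\ge c_0q^{st}$ gives $\mu(E(L_t))\le c_0^{-1}q^{-st}\int\nu_{L_s}\,d\mu$. Choosing $s\approx t/\delta$ leaves a saving of roughly $q^{-t^2/(2\delta)}$.
\end{enumerate}
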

Theorem \ref{thm:finite-generation} has a much more precise incarnation in the context of \cite{LW,SawinWood3M}. To explain that, we briefly recall how these random models are constructed.

Let $\wideF_n$ denote the free profinite group on $n$ generators.  For an integer $u$ and $n+u\geq0$, Liu and Wood \cite{LW} consider
$$
X_{u,n}=\wideF_n/\overline{\langle r_1,\ldots,r_{n+u}\rangle},
$$
where the $r_i$ are independent Haar-random elements of $\wideF_n$.  As $n \to \infty$, these groups converge in law to a measure $\mu_u$ on a natural probability space $\calP$ of profinite groups. We write $X_u$ for a random profinite group with law $\mu_u$.

On the topological side, Dunfield and Thurston \cite{DunfieldThurston} considered a random closed oriented $3$-manifold $M_{g,L}$ by gluing two genus-$g$ handlebodies with a random word of length $L$ in the mapping class group.  Sawin and Wood prove that, first as $L\to\infty$ and then as $g\to\infty$, the groups $\widehat{\pi_1(M_{g,L})}$ converge to a random group picked by a probability measure $\mu_{\mathrm{SW}}$ introduced in  \cite{SawinWood3M}.  At finite level $X_{u,n}$ has $n$ generators and $n+u$ random relations, whereas a genus-$g$ Heegaard splitting gives $g$ generators and $g$ relations. Remarkably, this pattern propagates at the infinite level, as our next two results show.

Recall that Liu and Wood asked whether their limiting groups are nevertheless finitely generated and finitely presented \cite[p.3]{LW}; Sawin and Wood asked whether their limiting group is topologically finitely generated \cite[Section 10.3]{SawinWood3M}.

It is now an immediate consequence of Theorem \ref{thm:finite-generation} that both $X_u$ and a $\mu_{\mathrm{SW}}$-random group are topologically finitely generated almost surely. The next two results say that the same holds for finite presentation and, furthermore, the presentation shape used to construct the groups at finite level propagates at infinite level.
\begin{theorem}
\label{thm:SW-finite-presentation}
The locus in $\mathcal{P}$ of groups $\mathcal{G}$ admitting a profinite presentation with $d:=d(\mathcal{G})<\infty$ generators and $d$ relations, and admitting no profinite presentation with fewer than $d$ relations, is Borel. For
$\mu_{\mathrm{SW}}$-almost every $\mathcal{G} \in \mathcal{P}$ the quantity $d$ is finite and there exist $x_1,\ldots,x_d \in \wideF_d$ such that
$$
\mathcal{G} {\sim}_{\textup{top.gr.}} \wideF_d/\overline{\langle x_1,\ldots,x_d\rangle}.
$$
Moreover, almost surely every finite profinite presentation of $\mathcal{G}$ with $n$ generators and $r$ relations satisfies $r\geq n\geq d$. Thus the minimum numbers of generators and relations are both $d$. In particular, a Sawin--Wood random group is topologically finitely presented with probability $1$.
\end{theorem}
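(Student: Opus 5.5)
We first use Theorem \ref{thm:finite-generation}: the Sawin--Wood measure is a decent space, since its moments are $|H_2(H,\Z)|$-type bounded (Sawin--Wood compute $\E|\Sur(\mathcal G,H)|$ explicitly, and it is bounded by a power of $|H|$ times a power of $|H_2(H,\Z)|$). So almost surely $d=d(\mathcal G)<\infty$. For Borel measurability, we express the relevant locus via finite quotients. The condition "$\mathcal G$ has a presentation with $d$ generators and $d$ relations" is equivalent, for finitely generated $\mathcal G$, to a compactness statement: for every open normal $N$ the set of $(x_1,\dots,x_d)\in \wideF_d^d$ with $\wideF_d/\overline{\langle x_i\rangle}\twoheadrightarrow \mathcal G/N$ compatibly is a nonempty closed set, and an inverse limit of nonempty compact sets is nonempty. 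Each finite-level condition depends only on $|\Sur(\mathcal G,H)|$ for finitely many $H$, hence is Borel. The condition "no presentation with fewer than $d$ relations" we will show follows almost surely from the minimality statement $r\ge n\ge d$ below, which again reduces to finite-level counting.

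For the existence of a balanced presentation, the plan is to identify, almost surely, $\mathcal G$ with a group of the form $\wideF_d/\overline{\langle x_1,\dots,x_d\rangle}$ by a criterion on finite quotients. The key invariant: a finitely generated profinite $\mathcal G$ admits a $d$-generator presentation with $r$ relations iff for every finite quotient the relation module data fit, which by Gasch\"utz-type lifting reduces to a bound on $\dim H^2(\mathcal G,M)-\dim H^1(\mathcal G,M)+\dim H^0(\mathcal G,M)$-type quantities for irreducible modules $M$, together with a nonabelian analogue controlling nonabelian chief factors (crowns). Concretely: (1) from Sawin--Wood's description, a.s. $\mathcal G$ satisfies a Poincaré-duality-type symmetry for each finite quotient $H$ and each irreducible $\F_p[H]$-module $M$, giving $\dim H^2 \le \dim H^1$ on extensions; (2) combine this with the crown-multiplicity insight: the number of relations needed equals $d$ plus the maximum over crowns of a defect, and the symmetry forces this defect to vanish. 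Then take an inverse limit of compatible presentations with exactly $d$ relations using compactness of $\wideF_d^d$.

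For the lower bound $r\ge n\ge d$: $n\ge d$ is definition. For $r\ge n$, note that if $\mathcal G=\wideF_n/\overline{\langle r_1,\dots,r_r\rangle}$ with $r<n$, then the abelianization-mod-$p$ would have $\dim H^1(\mathcal G,\F_p)-\dim H^2(\mathcal G,\F_p)\ge n-r>0$, so $\mathcal G$ would surject onto $\Z_p$ — infinite $\mathcal G^{ab}$. But Sawin--Wood groups almost surely have finite abelianization (the moment of $\Z/p^k$ is bounded in $k$, so $\mathcal G^{ab}$ is a.s. finite). Hence $r\ge n$, and in particular no presentation with fewer than $d$ relations exists.

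The main obstacle is step (2): proving that, almost surely, at every finite quotient the number of relations needed never exceeds $d$, uniformly over all nonabelian crowns. I would attack this by a Borel–Cantelli argument: bound the probability that some $\mathcal G/N$ requires more than $d$ relations by a sum of moments $\E|\Sur(\mathcal G,H)|$ over "obstructing" groups $H$ (crown-based powers $L_k$ of high multiplicity), using the decent growth bound against the fact that crown-based powers have $|\Sur(\wideF_d,L_k)|$ decaying superexponentially once $k$ exceeds the generation threshold, so the series converges.
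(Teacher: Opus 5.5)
Your step (1) already contains the decisive input. However, you misdiagnose what remains, and the argument you propose for step (2) would not close the gap.

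On the support of $\mu_{\mathrm{SW}}$ (which has full measure), Sawin--Wood give a class $\tau$ making the pairing $H^2(\Gp,V)\times H^1(\Gp,V^\vee)\to\Q/\Z$ nondegenerate in the first variable, with $\dim H^1(\Gp,V)=\dim H^1(\Gp,V^\vee)$. Once $d(\Gp)<\infty$ makes $H^1$ finite, this yields $\dim_DH^2(\Gp,V)\le\dim_DH^1(\Gp,V)$ for every finite simple $\F_p[[\Gp]]$-module $V$, with $D=\End_{\Gp}(V)$.

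From here everything is deterministic and involves no crowns. Take $F=\wideF_d\twoheadrightarrow\Gp$ with kernel $R$, and put $R_p=R/\overline{[R,R]R^p}$. The five-term sequence (using $H^2(F,V)=0$) gives $\dim_D\Hom_{\Gp}(R_p,V)\le\dim_DH^1(F,V)\le d\dim_DV$. By Nakayama, the mod-$p$ relation module of every finite level $RU/U\trianglelefteq F/U$ is generated by $d$ elements. The nonabelian chief factors of $RU/U$ cost nothing: quotient $RU/U$ by the intersection of its maximal $F/U$-normal subgroups, and a single element with nontrivial component in each nonabelian simple factor of that quotient already normally generates all of them (Lemma \ref{lem:gaschutz-normal-generation}). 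Compactness (Lemma \ref{lem:compactness-relations}) then produces $d$ relators.

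In other words, the relator count in Lubotzky's formula is a supremum over simple modules only. There is no ``defect over nonabelian crowns'' to kill, and this relation-module reduction is exactly the idea missing from your proposal.

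The Borel--Cantelli plan you sketch cannot substitute for it. Quotients $\Gp\twoheadrightarrow L_k$ with $k\geq f_L(d)$ are impossible once $d(\Gp)=d$. So a sum of moments over such crown-based powers controls the generator rank (this is Theorem \ref{thm:finite-generation}) and says nothing about relators. Relators are governed by $H^2$, i.e.\ by extensions of $\Gp$ by modules, not by quotients of $\Gp$. Also, $|\Sur(\wideF_d,L_k)|$ is simply $0$ for $k\ge f_L(d)$, not superexponentially small.

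Two smaller points. First, showing that ``no presentation with fewer than $d$ relators'' holds almost surely does not show the locus is Borel. For that you need each $\operatorname{FP}_{n,r}$ to be closed in $\calP$ (it is the continuous image of the compact space $\wideF_n^r$ in a Hausdorff space), so that the excluded set $\bigcup_{n,r}\bigl(\{d(\Gp)>r\}\cap\operatorname{FP}_{n,r}\bigr)$ is Borel. Second, the moment computation $\E\,\nu_{\Z/p^k\Z}=1$ together with $|\Aut(\Z/p^k\Z)|\to\infty$ shows only that the maximal abelian pro-$p$ quotient $A_p$ is finite for each $p$, not that $\Gp^{ab}$ is finite. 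Fortunately that is all you need. Abelianizing a presentation directly, $A_p\cong\Z_p^n/\sum_i\Z_p\bar y_i$, then gives $r\geq n$ more cleanly than your detour through $\dim H^1-\dim H^2$.
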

Theorem \ref{thm:SW-finite-presentation} settles in particular Sawin--Wood's question in a strong form.

We next turn to the Liu--Wood model, with the following result.

\begin{theorem}\label{thm:finite-presentation}
The locus of finitely presented groups is Borel in the Liu--Wood space $\mathcal{P}$.  Moreover, for every $u \in \Z$ and for $\mu_u$-almost every $\mathcal{G} \in \mathcal{P}$, the integer $d=d(\mathcal{G})$ is finite, one has that $d>-u$ if $u<0$, and there exist
$x_1,\ldots,x_{d+u}\in\wideF_d$ such that
$$
\mathcal{G}\cong \wideF_d/\overline{\langle x_1,\ldots,x_{d+u} \rangle}.
$$
In particular, $X_u$ is topologically finitely presented with probability $1$.
\end{theorem}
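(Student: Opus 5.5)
We will deduce this from Theorem \ref{thm:finite-generation} together with the explicit description of the Liu--Wood measure $\mu_u$. First, measurability. The Borel structure on $\mathcal{P}$ is generated by the sets $\{\mathcal{G}: \mathcal{G}\text{ has a quotient isomorphic to } H\}$ for $H$ finite. For fixed $d$ and $r$, we express the locus $\{\mathcal{G}\cong \wideF_d/\overline{\langle x_1,\ldots,x_r\rangle}\text{ for some } x_i\}$ as a countable intersection over levels: $\mathcal{G}$ has such a presentation iff for every finite level (the quotient by the intersection of all open normal subgroups of index $\le N$), the truncated group is a quotient of a finite truncation of $\wideF_d$ by $r$ relations. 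A compactness argument (the sets of admissible tuples $(x_i)$ at each level form an inverse system of nonempty finite sets) shows that this intersection is exactly the presentable locus. Each level condition depends only on the finite-quotient data, so the locus is Borel; taking the union over $d$ and $r$ gives measurability of the finitely presented locus.

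Second, we reduce to a moment or local-count computation. Theorem \ref{thm:finite-generation} applies since the Liu--Wood moments are $|H|^{-u}$, which satisfies the decency bound. Hence $d=d(\mathcal{G})<\infty$ almost surely. We then condition on $d(\mathcal{G})=d$. The key known input from \cite{LW} is that $\mu_u$ is the limit of $X_{u,n}$, with $\mu_u(\{\mathcal{G}: \mathcal{G}^{\le N}\cong Q\})$ given by explicit formulas. Our plan is to compare, for each $d$, the measure of the set $A_d=\{d(\mathcal{G})=d\}$ with the measure of
$$B_d=\{\mathcal{G}\cong\wideF_d/\overline{\langle x_1,\ldots,x_{d+u}\rangle}\}.$$
For $\mathcal{G}$ with $d(\mathcal{G})=d\le n$, write $\wideF_n=\wideF_d * \wideF_{n-d}$ via a Gaschütz-type lift of generators. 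A Haar-random presentation of $X_{u,n}$ that happens to be $d$-generated should, after Nielsen/Tietze moves, use $n-d$ of its relations to kill the extra generators. It therefore conditionally looks like $\wideF_d$ modulo $d+u$ Haar-random relations. We would make this precise at each finite level $N$: the probability that $X_{u,n}^{\le N}$ is isomorphic to a given $d$-generated $Q$ equals, up to $o(1)$ as $n\to\infty$, the probability that $(\wideF_d/\langle d+u\text{ random relations}\rangle)^{\le N}\cong Q$, times a factor independent of $Q$. Summing over levels gives $\mu_u(A_d\setminus B_d)=0$. Then $\mu_u(\bigcup_d A_d)=1$ yields the theorem. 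The inequality $d>-u$ when $u<0$ follows because a $d$-generated group with $d+u<0$ relations is impossible, and $d+u=0$ would give $\wideF_d$, which has probability zero since its moments are infinite.

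The main obstacle is the middle step: showing that a random $n$-generator, $(n+u)$-relator group which is $d$-generated is almost surely presentable with exactly $d+u$ relations on $d$ generators, i.e.\ that the ``wasted'' $n-d$ generators are absorbed by exactly $n-d$ relations. I would first try to use the crown-multiplicity principle from the abstract. By Theorem \ref{thm:finite-generation}'s mechanism, the relation module of $\mathcal{G}$ relative to $\wideF_d$ can need more than $d+u$ generators only if some crown-based power $L_k$ of a monolithic group appears with large multiplicity $k$. The moment bound $|H|^{-u}$ then makes large multiplicity have probability tending to zero. This gives the count of relation-module generators via Gaschütz/Lubotzky bounds on presentation rank, and we then lift relation-module generators to actual relators by a profinite Swan-type argument.
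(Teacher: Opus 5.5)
Your measurability argument is fine once the level condition is phrased as $\mathcal G^{\le N}\cong(\wideF_d^{\le N}/\bar R)^{\le N}$, with $\bar R$ the normal closure of the images of the $x_i$. It is then, in substance, the paper's observation that $\operatorname{FP}_{d,r}$ is a continuous image of the compact space $\wideF_d^{\,r}$. The appeal to Theorem~\ref{thm:finite-generation} is also fine. The gap is the central step: you do not show that $\mu_u$-almost every $\mathcal G$ with $d(\mathcal G)=d$ is presented on $d$ generators by $d+u$ relators, and neither of your two routes can do so.

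\textbf{The level-by-level comparison is false.} At level $N=2$ one has $\mathcal G^{\le 2}=\F_2^{N(\mathcal G)}$, and the computation in the proof of Proposition~\ref{prop:F2-rank-law} gives
\[
\frac{\Pr\{N(X_{u,d})=h\}}{\mu_u\{N=h\}}=\frac{\eta_2(d)\,\eta_2(d+u)}{\eta_2(\infty)\,\eta_2(d-h)},
\]
which varies with $h$. So $\wideF_d$ modulo $d+u$ Haar-random relators does not reproduce $\mu_u$ up to a constant on $d$-generated truncations. The Tietze heuristic behind the claim is not an argument.

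\textbf{The crown route fails for a structural reason.} By \eqref{eq:five-term}, the number of generators of the relation module is governed by $\dim H^2(\mathcal G,V)-\dim H^1(\mathcal G,V)$. That is, it is governed by extensions of $\mathcal G$ which are \emph{not} quotients of $\mathcal G$. Neither crown multiplicities of $\mathcal G$ nor upper bounds on $\E\nu_H$ see these. For instance, $\prod_{n\ge5}A_n$ is topologically $2$-generated and all its crown multiplicities are at most $1$. Yet $H^2(\prod_{n\ge5}A_n,\F_2)$ contains $\bigoplus_{n\ge5}H^2(A_n,\F_2)$, and every summand is nonzero because the Schur multipliers of the $A_n$ have even order. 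A presentation with $r$ relators forces $\dim_{\F_2}H^2(-,\F_2)\le r$, so this group is not finitely presented. Nevertheless the point mass at it satisfies $\E\nu_H\le|H|^2\le|H|^{-u}$ whenever $u\le-2$.

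\textbf{What the paper does instead.} What is needed is a deterministic statement at each finite level, and the paper extracts it from the exact structure of $\mu_u$ rather than from moment bounds.
For almost every $\mathcal G$, each basic clopen set $U_{S,\mathcal G^{\overline S}}$ containing it has positive $\mu_u$-mass. Weak convergence then shows that $H=\mathcal G^{\overline S}$ is $F_{n,S}$ modulo the normal closure of $n+u$ elements, for some $n\ge d$.
Lemma~\ref{lem:finite-level-relations} converts this into $d+u$ normal generators for the kernel of any $F_{d,S}\twoheadrightarrow H$. Its key point is that $\ker\bigl(H^2(H,V)\to H^2(F_{j,S},V)\bigr)$ does not depend on $j$. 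This gives $m_d(V)=m_n(V)-(n-d)\dim_DV$ for the relation-module multiplicities.
The identity $p_U(\ker\psi_U)=RU/U$ and Lemma~\ref{lem:compactness-relations} then globalize.

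\textbf{The inequality $d>-u$.} The reduction above needs $d+u\ge1$. With zero relators the module criterion only shows that the kernel is perfect, and a nontrivial product of nonabelian simple groups is perfect without being normally generated by zero elements. This is where Lemma~\ref{lem:gaschutz-normal-generation} uses $r\ge1$. Hence your derivation of $d>-u$ from the main step runs the wrong way. Its last step is also wrong: $\wideF_{-u}$ lies in $\calP$ and has finite epimorphism counts, so it does not have "infinite moments".

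The paper proves $d>-u$ first, in Lemma~\ref{lem:positive-relator-count}. Under $\mu_u$ the counts $Q_{A_n}=\nu_{A_n}/|\Aut(A_n)|$, for $n\ge7$, are independent Poisson variables of mean $\lambda_n=|A_n|^{-u-1}/2$. On the other hand, $d(\mathcal G)\le k:=-u$ forces $Q_{A_n}\le|\Sur(\wideF_k,A_n)|/|\Aut(A_n)|$. This bound is $0$ for $k=1$ and at most $\lambda_n(1-n^{-k})$ for $k\ge2$. Independence in the first case, and Chebyshev's inequality in the second, make this event null.
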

The underlying principle of the proof can be used to settle a conjecture of Nikolov: he conjectured that, for each $d>1$, there exists a topologically finitely generated perfect profinite group $\mathcal{G}$ that has every finite perfect $d$-generated group among its quotients \cite[Conjecture B1]{NikolovFiniteImages}. Our next result proves a slightly stronger statement. 
\begin{theorem}\label{thm:nikolov-B1}
Let $d$ be an integer at least $2$. Then there is a topologically $d$-generated profinite group $U_d$ such that
$$
U_d=[U_d,U_d]
$$
as an abstract group and every finite perfect group $H$ with $d(H)\leq d$ is a continuous quotient of $U_d$. Furthermore $U_d$ is unique up to isomorphism, it is projective and admits a profinite presentation with $d$ generators and $d$ relations; no profinite presentation of $U_d$ has fewer than $d$ relations. 
\end{theorem}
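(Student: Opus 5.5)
Let $\mathcal{C}_d$ be the class of finite perfect groups $H$ with $d(H)\le d$. The construction I would try is an inverse limit. Let $K$ be the intersection of all open normal subgroups $N\le \wideF_d$ with $\wideF_d/N\in\mathcal{C}_d$, and set $U_d=\wideF_d/K$.

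\textbf{Finite quotients.} Every $H\in\mathcal{C}_d$ is a quotient of $\wideF_d$, hence of $U_d$. Conversely, the class of finite perfect groups is closed under quotients and under subdirect products of finitely many members: if $H\le H_1\times H_2$ is subdirect with each $H_i$ perfect, then $[H,H]$ projects onto each factor, and we need $[H,H]=H$. This last point is exactly where care is needed. For subdirect products $[H,H]$ need not equal $H$ in general; here it does, because $H/[H,H]$ is abelian and $H\cap H_1$ is normal in $H$, and a standard argument shows $H$ is perfect. Granting this, the finite continuous quotients of $U_d$ are precisely $\mathcal{C}_d$, so $U_d$ is pro-$\mathcal{C}_d$ and topologically $d$-generated.

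\textbf{Abstract perfection.} I would not take this as automatic. A finitely generated profinite group all of whose finite quotients are perfect has closed commutator subgroup equal to $U_d$. Abstract perfection then follows from Nikolov--Segal, since for finitely generated profinite groups the abstract commutator subgroup is closed.

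\textbf{Uniqueness.} Suppose $V$ is another $d$-generated profinite group with the same finite quotients as $U_d$. Finitely generated profinite groups are determined by their sets of finite quotients, via the standard compactness/Gaschütz lifting argument. Hence $V\cong U_d$.

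\textbf{Projectivity.} Take an embedding problem $U_d\to A$, $B\twoheadrightarrow A$ with $B$ finite and kernel $M$. By the standard reduction I may take $M$ minimal normal, and reduce to the case where $B$ is generated by the lifted image. If $B$ is perfect and $d$-generated, Gaschütz's lemma lifts the surjection. The obstacles are:
\begin{itemize}
\item $B$ may not be perfect. Here I would replace $B$ by its perfect residual $B^{(\infty)}$, which still maps onto $A$ because $A$ is perfect.
\item $d(B)$ may exceed $d$.
\end{itemize}
The second obstacle is where the paper's crown-theoretic principle enters. For a chief factor $M$ of a perfect group, the number of generators needed grows only when the crown multiplicity is large. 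One then solves the weak embedding problem for $U_d\to A$ by finding, inside $B$, a $d$-generated perfect supplement to $M$ mapping onto $A$. That supplement is a quotient of $U_d$, which gives a weak solution, and that is enough for projectivity. I expect this step to be the main difficulty, and the route I would try first is counting $d$-tuples in $B$ lifting generators of $A$ via the crown-based estimate for the probability of generation modulo $M$.

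\textbf{Presentation.} Projective groups have cohomological dimension $\le 1$, so $H^2(U_d,\F_p)=0$ for all $p$. Take a minimal presentation $1\to R\to\wideF_d\to U_d\to 1$. By projectivity, $U_d$ embeds as a closed subgroup $s(U_d)$ of $\wideF_d$ complementing $R$, and so $\wideF_d=R\rtimes s(U_d)$. The relation module $R/[R,R]$ is then generated by few elements. Indeed, $d_{\wideF_d}(R)$ is governed by $\max_p\dim H^1(R,\F_p)^{U_d}$ over irreducible modules, which Gaschütz-style counting bounds by the rank. So I would aim for $d$ relations via the five-term sequence: $H_1(U_d)=0$ and $H_2=0$ give $\dim$ of relation-module coinvariants equal to $d$ at each prime, using the perfect-abelianisation count $d-0$. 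This also yields the lower bound: any presentation with $n$ generators and $r$ relations has $H_1=0$, forcing $r\ge n\ge d$ by comparing abelianisations, $\Z_p^n/\langle r\text{ relations}\rangle=0$. The upper bound of exactly $d$ relations I would obtain by lifting generators of the relation module with Gaschütz again.
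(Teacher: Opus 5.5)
\textbf{There is a genuine gap: your $U_d$ is not perfect.} The claim that a finite subdirect product of perfect groups is perfect is false. If $P$ is perfect, $Z\le Z(P)$ and $Q=P/Z$, then $(g_1,g_2)\mapsto(g_1,g_1^{-1}g_2)$ is an isomorphism $P\times_QP\cong P\times Z$.

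Concretely, let $(a,b)$ generate $\mathrm{SL}_2(5)$. Since $-1$ is central in a perfect group, it lies in the Frattini subgroup (Lemma~\ref{lem:central-frattini}), so $(-a,b)$ also generates. Consider the epimorphisms $\wideF_d\twoheadrightarrow\mathrm{SL}_2(5)$ sending $x_1\mapsto a$ (resp.\ $-a$), $x_2\mapsto b$, and $x_i\mapsto 1$ for $i\geq 3$. Their kernels $N_1,N_2$ have quotients in $\mathcal C_d$, yet $\wideF_d/(N_1\cap N_2)\cong\mathrm{SL}_2(5)\times C_2$. Since $K\le N_1\cap N_2$, your $U_d=\wideF_d/K$ maps onto $C_2$. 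It is not perfect, its finite quotients are not $\mathcal C_d$, and the abstract-perfection, uniqueness and projectivity steps all concern the wrong group.

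\textbf{The missing idea is directedness of $\mathcal C_d$.} You need that any two members of $\mathcal C_d$ have a common cover inside $\mathcal C_d$. This is the core of the paper (Lemma~\ref{lem:amalgamation}, Proposition~\ref{prop:directedness}), proved by climbing a chief series of $B$.
\begin{enumerate}
\item If the chief factor $N$ is central, replace $E=A\times_QB$ by $E'$. It is perfect, and $d(E')=d(A)$ because $\ker(E'\to A)$ is central, hence Frattini.
\item If $N$ is noncentral, Gasch\"utz-lift a generating $d$-tuple of $Q$ to $A$ and to $B$. The subgroup generated by the paired lifts either maps isomorphically onto $A$, or equals $E$, which is then perfect because $[N,B]=N$.
\end{enumerate}
Then $U_d=\varprojlim P_n$ along a chain $P_{n+1}\twoheadrightarrow P_n$ in $\mathcal C_d$. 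This $U_d$ is \emph{not} the residually-$\mathcal C_d$ quotient of $\wideF_d$: by the example above, for a fixed $\wideF_d\twoheadrightarrow U_d$ only some epimorphisms $\wideF_d\twoheadrightarrow H$ factor through it.

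\textbf{Your projectivity step has a separate gap, and the obstacle you identify is the easy part.} A minimal subgroup $B_0\le B$ mapping onto $A$ is perfect and satisfies $B_0\cap M\le\Frat(B_0)$. Hence $d(B_0)=d(A)\le d$, so the generator count is not the difficulty. The real problem is compatibility. Knowing that $B_0$ is \emph{some} quotient of $U_d$ gives an epimorphism $U_d\to B_0$, but its composite with $B_0\to A$ need not be the prescribed map $U_d\to A$. No crown-theoretic count of lifts addresses this.

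The paper avoids embedding problems altogether. Given $\pi:\wideF_d\twoheadrightarrow U_d$, take a minimal closed subgroup $P\le\wideF_d$ with $\pi(P)=U_d$. It is topologically $d$-generated, satisfies $P=\overline{[P,P]}$ by minimality, and maps onto $U_d$, so $P\cong U_d$ by uniqueness. The Hopfian property (Lemma~\ref{lem:hopfian}) then makes $\pi|_P$ an isomorphism, which gives a section $s$. The idempotent $e=s\circ\pi$ yields the $d$ relators $x_ie(x_i)^{-1}$ directly (Lemma~\ref{lem:idempotent-relators}). Your alternative, via $H^2(U_d,V)=0$ and Proposition~\ref{prop:presentation-criterion}, would also work once projectivity is known.

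\textbf{Finally, your lower bound $r\geq n\geq d$ is incomplete.} It uses $d(U_d)\geq d$, which you never prove. This requires a perfect quotient of rank exactly $d$, such as $A_5^t$ with $t=f_{A_5}(d-1)$.
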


\subsection{Method of proof}\label{subsec:method}
We first describe the main ideas behind the proof of Theorem \ref{thm:finite-generation}.

An initial observation dating back to Cohen--Lenstra \cite{CohenLenstra}, i.e. the very source of random group theory applied to number theory, is that highly symmetric objects should occur rarely. A simple example of a finite group with high rank is $\mathbb{F}_p^n$, i.e. a group which has a high multiplicity, which translates into a massive group of symmetries. At the same time for an abelian (or even pro-nilpotent) $\mathcal{G}$ one would be able to witness the event $d(\mathcal{G})>m$ via the events $\mathcal{G} \twoheadrightarrow \mathbb{F}_p^{m+1}$ and the general Cohen--Lenstra-principle incarnates into the inequality
\begin{equation}\label{eq: CL-intro}
\mu(\mathcal{G} \twoheadrightarrow H) \ll\frac{|H|^{\alpha} \cdot |H_2(H,\Z)|^{\delta}}{|\text{Aut}_{\text{gr}}(H)|},
\end{equation}
for each finite group $H$. Before discussing how one actually capitalizes from this (we still need to sum over all primes $p$), we first need to explain how we face the more fundamental task: namely to find a non-abelian generalization of this picture. We use the following principle from \emph{crown theory}. This theory provides a list of atomic groups called \emph{monoliths} and a notion of \emph{crown power} of a monolith. Before describing these notions, we give the principle.

\textbf{Principle:} \emph{Finite groups with large rank are always witnessed by a large crown power of a monolith.}

To make this principle precise, we remark that the example $\mathbb{F}_p^n \rtimes \mathbb{F}_p^{*}$ shows very clearly that a group might manifest no multiplicity at all, \footnote{in the sense of surjecting on some group of the form $H^N$ for some large $N$.} but have still high rank, hence the need for a subtler notion of power. In the language of crown theory, in this example the crown multiplicity is in fact $n$ and this is the crown power of the monolith $\mathbb{F}_p \rtimes \mathbb{F}_p^{*}$. Monoliths come in two flavors: of abelian type and of non-abelian type. The ones of abelian type are a straightforward generalization of the example $\mathbb{F}_p \rtimes \mathbb{F}_p^{*}$: namely these are the groups of the form $V \rtimes K$, where $V$ is an irreducible faithful representation of $K$ over a prime field. The $n$-th crown power of this is $V^n \rtimes K$.

The deep works of Gasch\"utz--Lucchini \cite{GaschutzCrowns,DallaVoltaLucchini,LucchiniGenerators}, Guralnick--Hoffman \cite{GuralnickHoffman} and Lucchini--Thakkar \cite{LucchiniThakkar} provide a powerful generalization of the abelian case: for $m\geq2$ the event $\{d(\mathcal{G})> m\}$ is always witnessed by $\mathcal{G}$ surjecting to the $f_L(m)$-th crown power where $f_L(m)$ is a function of $m$ that depends on the monolith $L$ in an explicit fashion. 

Without a sufficiently good control on $f_L(m)$ simply listing all of the monoliths (e.g. listing all of relevant $K$'s above inside $\text{Aut}(V))$) would defeat the gain obtained from large automorphisms and make the bound one gets from equation (\ref{eq: CL-intro}) vacuous. That control is provided in the abelian case by Guralnick--Hoffman and in the non-abelian case by Lucchini--Thakkar. Injecting these results into (\ref{eq: CL-intro}) and capitalizing on the obvious automorphisms coming from large multiplicity yields Theorem \ref{thm:finite-generation} upon summing the contributions along all monoliths. A slightly subtler detail here is that at the phase transition $\delta=2$ one needs to switch from automorphisms of a large crown power to surjections of a large crown power to a smaller one and a corresponding version of equation \eqref{eq: CL-intro}. We emphasize that this part of the proof, devoted to control the quantity $f_L(m)$, relies indirectly, yet heavily, on the classification of finite simple groups, as the currently known proofs of these results rely on that. 

To pass from Theorem \ref{thm:finite-generation} to Theorem \ref{thm:SW-finite-presentation}, we use the criterion of Sawin--Wood \cite[Theorem~1.5]{SawinWood3M} and a relation-module argument giving $d$ generators and at most $d$ relations \cite[Theorems~3.1 and~5.1]{Lubotzky}. The opposite inequality comes from the fact that the maximal abelian pro-$2$ quotient is finite almost surely: a presentation with $n$ generators and $r$ relations gives a quotient of $\Z_2^n$ by $r$ vectors, and finiteness forces $r\geq n$. For Theorem \ref{thm:finite-presentation}, convergence of the finite Liu--Wood models supplies presentations in each finite pro-$S$-category. A relation-module argument reduces the number of generators while preserving the difference between the numbers of relations and generators \cite[Lemma~13.1]{LW}. This reduction requires at least one relator; for $u<0$, we prove that $d(\mathcal{G})>-u$ almost surely, which supplies this condition. A compactness argument then gives the global presentation.

Finally in the proof of Theorem \ref{thm:nikolov-B1} we run the principle above in the opposite direction. The group we build must cover every finite perfect $d$-generated group, so it contains chief series of unbounded length, and the whole point is to stack these chief factors so that no monolith $L$ reaches crown multiplicity $f_L(d)$, the critical multiplicity at which the rank would exceed $d$. This process happens in Lemma \ref{lem:amalgamation} and we explain it in a self-contained manner, however the steps in the proof naturally stem out of crown theory and we expand that in Remark \ref{remark: interpret}. 
\subsection{Further results}
For the Liu--Wood measure we can go beyond almost surely finite generation and provide precise asymptotic control on the distribution of $\mathcal{G} \mapsto d(\mathcal{G})$.
\begin{theorem}
\label{thm:generator-rank-first-order}
For every fixed $u\in\Z$,
$$
 \mu_u\{\mathcal{G}\in\mathcal{P}:d(\mathcal{G})=r\}
 \sim
 \frac{2^{-r(r+u)}}
 {\displaystyle\prod_{j=1}^{\infty}(1-2^{-j})}
 \qquad(r\to\infty).
$$
More precisely, if
$\eta_q(k)=\prod_{j=1}^k(1-q^{-j})$ and
$\eta_q(\infty)=\prod_{j\geq1}(1-q^{-j})$, then
$$
 \mu_u\{\mathcal{G}\in\calP:d(\mathcal{G})=r\}
 =
 \frac{\eta_2(\infty)}{\eta_2(r)\eta_2(r+u)}
 2^{-r(r+u)}
 +O_u\bigl(3^{-r^2+C_ur}\bigr)
$$
for all sufficiently large $r$, where $C_u$ depends only on $u$.
\end{theorem}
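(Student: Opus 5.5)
We will split the event $\{d(\mathcal{G})=r\}$ into the part detected by the prime $2$ and an error term controlled by the crown-theoretic bounds behind Theorem \ref{thm:finite-generation}.

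\textbf{Main term.} Let $V_2(\mathcal{G})=\mathcal{G}/\Frat_2(\mathcal{G})=\mathcal{G}^{\mathrm{ab}}\otimes\F_2$ be the maximal elementary abelian $2$-quotient, and write $d_2(\mathcal{G})=\dim_{\F_2}V_2(\mathcal{G})$. Always $d(\mathcal{G})\geq d_2(\mathcal{G})$. The Liu--Wood measure comes from $\wideF_n$ modulo $n+u$ Haar-random relators. The image of $\mathcal{G}$ in $\F_2$-abelianization is therefore $\F_2^n$ modulo $n+u$ uniform random vectors. Taking $n\to\infty$, the law of $d_2$ is the classical Cohen--Lenstra-type corank distribution:
$$\mu_u(d_2=r)=\frac{\eta_2(\infty)}{\eta_2(r)\eta_2(r+u)}2^{-r(r+u)}.$$
One can also read this off the moments $\mu_u$-averages of $|\mathrm{Sur}(\mathcal{G},\F_2^k)|$, which are $2^{-ku}$ up to the standard factors. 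This formula is exact, obtained by passing to the limit in the finite-$n$ count of $(n+u)\times n$ matrices over $\F_2$ of rank $n-r$. Hence
$$\mu_u(d=r)=\mu_u(d_2=r)-\mu_u(d_2=r,\,d>r)+\mu_u(d_2<r,\,d=r),$$
and it remains to bound the last two terms by $O_u(3^{-r^2+C_ur})$.

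\textbf{Error terms.} Both correction events lie inside $\{d(\mathcal{G})\geq r,\ d(\mathcal{G})>d_2(\mathcal{G})\}$. The one exception is $d_2=r<d$, which is contained in $\{d\geq r+1,\ d> d_2\}$. On this event the rank is not witnessed by the monolith $\F_2$, that is, the trivial module over $\F_2$. By the crown principle stated in the Method section (Gaschütz--Lucchini, Guralnick--Hoffman, Lucchini--Thakkar), $\{d(\mathcal{G})\geq m\}$ is witnessed by a surjection onto a crown power $L_k$ of some monolith $L$, with $k=f_L(m-1)$. Because $d_2<d$, we may take $L\neq\F_2$. We then bound $\mu_u(\mathcal{G}\twoheadrightarrow L_k)$ using the moment version of \eqref{eq: CL-intro} with the explicit Liu--Wood moments. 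The Liu--Wood moments are $|H|^{-u}$ times a factor involving $|H_2|$, so the decency constants are explicit.

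For $L=\F_p$ with $p\geq3$, we have $L_k=\F_p^k$ and $k=m$. Moreover $|\Aut(\F_p^k)|\approx p^{k^2}$, giving a bound of roughly $p^{-k^2-ku}$. The sum over $p\geq3$ is dominated by $p=3$, which yields $3^{-r^2+O_u(r)}$. For abelian monoliths $V\rtimes K$ with $K\neq1$, and for non-abelian monoliths, the multiplicity $f_L(m)$ is at least about $m\cdot\dim V$ (Guralnick--Hoffman, up to the $\dim H^1$ correction), respectively large by Lucchini--Thakkar. The automorphism group of the crown power contains $\GL_k(\End_K V)$, or $\Sym_k$ together with the diagonal automorphisms. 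This gives a decay of at least $|V|^{-k^2/\dim V+O(k)}$, which again beats $3^{-r^2+C r}$ after summing over all monoliths. That summation is exactly the one already performed in the proof of Theorem \ref{thm:finite-generation}; here we only need to keep track of the exponent.

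\textbf{Main obstacle.} The hard step is making the uniform tail bound from the proof of Theorem \ref{thm:finite-generation} quantitative with leading exponent $-r^2$ and base $3$, uniformly over all monoliths other than $\F_2$. This includes the non-abelian monoliths and the abelian monoliths over $\F_2$ with nontrivial $K$. For the latter, $|V|\geq4$, so the decay is at least $4^{-k^2/\dim V}$ with $k\approx r\dim V$, which beats $3^{-r^2}$. I would first extract from that proof an explicit estimate of the form $\mu(d\geq m,\text{ witnessed by }L)\ll |L|^{-c\,m^2}$, and then compare exponents case by case.
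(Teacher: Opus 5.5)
Your proposal is correct and takes essentially the paper's route: the exact law of $d_2$ comes from the finite-level matrix count, and $\{d=r\}\triangle\{d_2=r\}$ is covered by crown witnesses $L_{f_L(r-1)}$ or $L_{f_L(r)}$ with $L\not\cong\F_2$, whose total mass is $3^{-r^2+O_u(r)}$ (excluding $\F_2$ forces $q^{b^2}\geq 3$ in the abelian sum, and the nonabelian part is the doubly exponentially small bound already used for Theorem~\ref{thm:finite-generation}). One correction: the Liu--Wood moments are exactly $|H|^{-u}$ with no Schur multiplier factor, and this $\delta=0$ is precisely what makes your per-crown bounds of size $q^{-k^2+O_u(k)}$, and hence the base-$3$, coefficient-$1$ error term, valid.
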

Theorem \ref{thm:generator-rank-first-order} can be interpreted as saying that the crown powers of the monolith $\mathbb{F}_2$ are asymptotically capturing the behavior of the function $d(-)$. In fact, we believe that Theorem \ref{thm:generator-rank-first-order} is only the first order approximation of the full story. There should be a full asymptotic expansion labeled by monoliths suitably ordered each coming at the appropriate time with an explicit term. It would be very interesting to explore this story in full.

Recall that a profinite group $\mathcal{G}$ is \emph{positively finitely generated} (PFG) if, for some integer $d\geq1$, the tuples that generate $\mathcal{G}$ topologically form a subset of positive Haar measure in $\mathcal{G}^d$ \cite{MannShalev}. It has \emph{uniformly bounded exponential representation growth} (UBERG) if there exists a constant $c>0$ such that, for every prime $p$ and every integer $n\geq1$, the number of isomorphism classes of continuous irreducible $n$-dimensional representations of $\mathcal{G}$ over $\mathbb{F}_p$ is at most $p^{cn}$ \cite[Definition 5.1]{KionkeVannacci}. Positive finite generation implies UBERG \cite[Corollary 1.11]{CorobCookVannacci}. Our next result determines exactly when these properties hold in the Liu--Wood model.
\begin{theorem}\label{thm:intro-PFG}
Let $u\in\Z$, and let $\mathcal{G}$ have law $\mu_u$.
If $u\geq-1$, then almost surely $\mathcal{G}$ is PFG
and has UBERG.
If $u\leq-2$, then almost surely $\mathcal{G}$ is not PFG
and does not have UBERG.
\end{theorem}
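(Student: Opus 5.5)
We would handle the two regimes $u\geq -1$ and $u\leq -2$ separately, using the standard criteria. For PFG we use the Mann--Shalev criterion \cite{MannShalev}: a finitely generated profinite group is PFG if and only if it has polynomial maximal subgroup growth. For UBERG we use Kionke--Vannacci, and for its failure we exhibit many irreducible modules directly. By Theorem \ref{thm:finite-generation}, $\mathcal{G}$ is almost surely finitely generated, so the issue is the growth of the number of maximal subgroups, or of irreducible modules.

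\textbf{The case $u\geq-1$.} Since PFG implies UBERG \cite[Corollary 1.11]{CorobCookVannacci}, it suffices to prove PFG. The plan is to bound the expected number of maximal subgroups of index $n$. A maximal subgroup $M$ of index $n$ yields a primitive action, that is, a surjection onto a primitive group $P\leq S_n$. Hence
$$\E\, m_n(\mathcal{G}) \leq \sum_{P} \frac{\E|\text{Epi}(\mathcal{G},P)|}{|\Aut(P)|}\cdot(\text{number of point stabilizer classes}).$$
The Liu--Wood moments are explicit: $\E|\text{Epi}(X_u,H)|=|H|^{-u}\cdot(\text{bounded factor})$, via the $u$-moment formula with Schur multiplier corrections. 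The number of primitive groups of degree $n$ is $n^{O(\log n)}$, and each has order at most $n^{c\log n}$ once large alternating and affine types are treated separately.

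We would show that $\sum_n n^{-s}\E m_n<\infty$ for some $s$. Then Borel--Cantelli gives $m_n(\mathcal{G})\leq n^{s}$ for all large $n$ almost surely, which is PFG. The main obstacle is the affine and almost-simple diagonal primitive types. In the affine case $P=V\rtimes K$, with $V=\F_p^k$ and $n=p^k$, the maximal subgroups are complements counted by $H^1$, so we must control $\E|\Hom_{\mathcal{G}}\text{-cocycles}|$. This amounts to the expected number of surjections onto $V^j\rtimes K$ divided by $|\GL|$-type automorphisms, which is where the crown bounds (Guralnick--Hoffman) re-enter. The expected number of $\mathcal{G}$-module maps $V^j$ behaves like $|V|^{-ju}$ times $|V|^{j}$ from $H^2$ or $H_2$ contributions, so it is summable precisely when $u\geq -1$, with $u=-1$ as the borderline. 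At $u=-1$ we expect to need the finer bound that the relevant $H^1$ has dimension $\leq \dim H^0+O(1)$ on average.

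\textbf{The case $u\leq -2$.} It suffices to show that UBERG fails, because PFG implies UBERG. We would look at $p$-parts: the abelianization modulo $p$ and, more usefully, the modules $\F_p$-representations of quotients $\F_\ell^{k}$ for varying primes. For $u\leq-2$ the deficiency is negative, so the pro-$\ell$ completion has $d(\mathcal{G}_\ell)\geq -u\geq 2$ with positive probability for each $\ell$. Independence across primes, which comes from the product structure of Liu--Wood measures for groups of coprime order, then gives almost surely infinitely many primes $\ell$ with $\mathcal{G}\twoheadrightarrow \F_\ell^2$. Counting one-dimensional characters over $\F_p$ with $p\equiv1 \pmod \ell$ does not immediately beat $p^{cn}$, so we would refine the argument. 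The refinement is to find quotients $\F_\ell^{k}$ with $k$ growing, or free-like quotients, whose irreducible representations over $\F_p$ of dimension $n$ number at least $p^{n\cdot\omega(1)}$. For example, the surjections onto $\F_\ell^{2}\rtimes$ or onto $\wideF$-like pro-$\ell$ quotients of rank $\geq 2$ have exponential representation growth. The key step, and likely the hardest, is a second moment or Borel--Cantelli argument showing that almost surely there exist quotients, namely $\GL_n(\F_p)$-images as $n,p$ vary, with count exceeding $p^{cn}$ for every $c$. This uses the moment $\E|\text{Epi}(\mathcal{G},H)|\approx |H|^{-u}\geq|H|^{2}$, which grows fast enough to force many surjections onto products of distinct simple modules.
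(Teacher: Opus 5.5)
Your proposal has genuine gaps in both regimes.

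\textbf{The case $u\geq-1$.} The framework you choose (Mann--Shalev, a first-moment bound on the number $m_n(\Gp)$ of maximal open subgroups of index $n$, then Borel--Cantelli) is viable, but the step that makes it work is missing. You sum over \emph{all} primitive groups of degree $n$, and the count you feed in, $n^{O(\log n)}$, is superpolynomial. With that input the best first-moment bound available is of size $n^{O(\log n)}$, which is useless for Borel--Cantelli.

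The type-by-type analysis you then propose is aimed at the wrong difficulty. This covers the Schur-multiplier corrections, the $H^2$ contributions in the affine case, and a finer $H^1$ bound at $u=-1$. The Liu--Wood moment is exactly $\E\,\nu_H=|H|^{-u}$ (Proposition~\ref{prop:LW-moments}), with no multiplier factor. Moreover, the threshold at $u=-1$ comes from nonabelian simple quotients, not from affine types.

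The missing idea is to work on $\{d(\Gp)\leq D\}$, which exhausts the space up to a null set by Theorem~\ref{thm:finite-generation}. On that event only $D$-generated primitive quotients occur. Moreover, $\Sym(n)$ has at most $n^{c_pD}$ conjugacy classes of $D$-generated primitive subgroups \cite[Theorem 8.1]{JaikinPyber}. Let $m_n^{(D)}$ count the maximal subgroups $M$ of index $n$ with $\Gp/\mathrm{core}(M)$ $D$-generated. Counting faithful primitive actions gives $\E\,m_n^{(D)}=\sum_{[H]}n\,|H|^{-u}/|N_{\Sym(n)}(H)|$, summed over these classes. Since $H\leq N_{\Sym(n)}(H)$, every term is at most $n|H|^{-u-1}\leq n$ when $u\geq-1$, uniformly over all primitive types. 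Hence $\E\,m_n^{(D)}\leq n^{1+c_pD}$, and your Borel--Cantelli step closes.

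Repaired this way, your route is a legitimate alternative to the paper's. The paper applies Markov's inequality to $|\Sur(\Gp,L)|$ only for monolithic primitive $L$ with nonabelian socle, and concludes via the Jaikin-Zapirain--Pyber criterion \eqref{eq:JZP-epi-criterion} and Lemma~\ref{lem:PFG-monolith-count}. Your route trades that criterion (and \cite[Lemma 9.2]{JaikinPyber}) for Mann--Shalev. In both, UBERG then follows from PFG.

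\textbf{The case $u\leq-2$.} Reducing to the failure of UBERG is correct, but you never exhibit a witness, and the directions you explore cannot supply one. Any representation with solvable image factors through the maximal prosolvable quotient of $\Gp$. That quotient is finitely generated and prosolvable, hence PFG by a theorem of Mann, and therefore has UBERG. So characters of $\F_\ell^k$-quotients, or representations of free pro-$\ell$ quotients, can never give $r_n(\Gp,p)>p^{cn}$ for every $c$. Exponential representation growth is exactly what UBERG permits. Nor does the large first moment $|H|^{-u}$ by itself force many quotients almost surely; you need a concentration statement.

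The missing idea is to use nonabelian simple quotients, whose counts are Poisson. For $T_m=\mathrm{PSL}_m(2)=\GL_m(2)$, Lemma~\ref{lem:simple-poisson} shows that the number $Q_{T_m}$ of open normal subgroups with quotient $T_m$ is Poisson. Its mean satisfies $\lambda_{T_m,u}=|T_m|^{-u-1}/|\Out(T_m)|\geq\tfrac12\eta_2(\infty)\,2^{m^2}$. A Poisson lower-tail bound and Borel--Cantelli give $Q_{T_m}>2^{cm}$ for all large $m$ and every $c$, almost surely. These quotient maps have distinct kernels. Composing them with the natural representation of $\GL_m(2)$ on $\F_2^m$ gives $r_m(\Gp,2)\geq Q_{T_m}>2^{cm}$, so UBERG fails, and hence so does PFG. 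Alternatively, $l(T_m)\leq 2^m-1$ violates \eqref{eq:JZP-criterion} directly.
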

See Theorem \ref{thm:LW-PFG} for a complete and slightly more refined statement, controlling some crown multiplicities of non-abelian types almost surely.

Finally, we remark that the original random group model introduced in Liu--Wood \cite{LW} needs to accomodate extra-structure when one considers families of $\Gamma$-extensions of $\Q$, mostly coming from the $\Gamma$-action, as carried out by Liu--Wood--Zureick-Brown \cite{LWZB}. One can see that Theorem \ref{thm:finite-generation} applies easily to that case. This is done in Section \ref{subsec:equivariant-transfer}. 
\subsection{Layout of the paper}
Section \ref{sec:witnesses} develops the basics and the notion of $m$-witnesses. Section \ref{sec:crowns} implements crown theory and gives a very efficient set of $m$-witnesses.
Section \ref{sec:estimates} carries out the resulting estimates, proves Theorem \ref{thm:finite-generation}, and applies it to both the limiting Liu--Wood and Sawin--Wood measures.  Section \ref{sec:finite-presentation} proves Theorems \ref{thm:SW-finite-presentation} and
\ref{thm:finite-presentation}.  Section \ref{sec:generator-rank-asymptotics} proves
Theorem~\ref{thm:generator-rank-first-order}.  Section \ref{sec:perfect-universal} proves Theorem \ref{thm:nikolov-B1}, that is Nikolov's Conjecture B1, together with the uniqueness, projectivity and presentation statements. Section \ref{sec:further-applications} proves Theorem \ref{thm:intro-PFG} and gives further applications of Theorem \ref{thm:finite-generation}.
\subsection{Human--AI collaboration}
The idea of using crown theory to transfer high generator rank into a high automorphism cost was discovered first by \emph{GPT5.5 pro} in two separate conversations with the authors (respectively about the solvable regime, i.e. abelian type crowns, and non-solvable regime, i.e. non-abelian type crowns). The entire proof of Theorem \ref{thm:finite-generation} and Theorem \ref{thm:finite-presentation} was subsequently and independently discovered also by \emph{Aletheia} \cite{aletheia}, an internal agent at Google DeepMind. Autonomous proofs of Theorem \ref{thm:finite-generation} and Theorem \ref{thm:finite-presentation} were also obtained using Google's \emph{Colosseum} harness \cite{colosseum} and \emph{Cogentic framework} from Google Research. 

While digesting the ideas of the proof, the authors have then explored natural further directions and applications, which culminated in the other results presented above. This exploration has been done in a collaborative fashion with \emph{GPT5.6 Sol pro}, \emph{Fable} as well as with internal agents at Google DeepMind. We now explain this exploration in greater detail, outlining what happened for each of the other results:
\begin{enumerate}
\item Theorem \ref{thm:SW-finite-presentation}, Theorem \ref{thm:generator-rank-first-order} and Theorem \ref{thm:intro-PFG} were discovered by the authors. 
\item Theorem \ref{thm:nikolov-B1} was discovered as follows. The authors wanted to explore to what extent Theorem \ref{thm:finite-generation} could be used from the viewpoint of the \emph{probabilistic method} to construct interesting profinite groups. They prompted AI-agents from all the models above to explore possibilities. After scanning a long list of possible applications provided by AI, the authors selected to explore Conjecture B1 of Nikolov, where \emph{GPT5.6 Sol pro} discovered a proof. 
\end{enumerate}
Once the process above settled, the authors have used \emph{Astra} to draft this work in a back-and-forth fashion as follows. The first version was produced by AI following the authors' instructions. They then rewrote, or in some places asked \emph{Astra} to rewrite, or occasionally both, the entirety of the text, which was subsequently proofread (by both the authors and \emph{Astra}) and edited several times until it reached a status that satisfied them.

\subsection*{Acknowledgments}
The authors are grateful to Yuan Liu and Melanie Wood for past conversations about this problem. They thank Tony Feng and Jack Miller for comments on a preliminary version of this work that led to an improvement of the exposition. CP is grateful to the members of the \emph{Superhuman reasoning team} at Google DeepMind for reproducing Theorem \ref{thm:finite-generation} and Theorem \ref{thm:finite-presentation} autonomously as well as for several conversations about artificial intelligence and mathematics. CP also thanks David P.~Woodruff and the \emph{Cogentic team} for sharing the
autonomous proofs of Theorems~\ref{thm:finite-generation}
and~\ref{thm:finite-presentation} obtained using \emph{Colosseum} and \emph{Cogentic framework} respectively. MS is co-funded by the European Union (ERC, Function Fields, 101161909).

\section{\texorpdfstring{$m$}{m}-witnesses}\label{sec:witnesses}

For a profinite group $\mathcal{G}$, let $d(\mathcal{G})$ denote the least cardinality of a finite
topological generating set, with $d(\mathcal{G})=\infty$ if no such set exists.  All
epimorphisms from profinite groups to finite groups will be understood to be
continuous.

\begin{definition}\label{def:decent-space}
Let $(\Omega,\mathcal{B},\mu)$ be a probability space whose points are profinite
groups up to isomorphism.  For every finite group $H$, set
$$
\nu_H(\mathcal{G})=|\Sur(\mathcal{G},H)| \in\Z_{\geq0} \cup \{\infty\},
$$
where any infinite cardinality is recorded as $\infty$.  We say that
$(\Omega,\mathcal{B},\mu)$ is \emph{decent} if $\nu_H$ is measurable for every finite group $H$ and if
there are fixed constants $M\geq1$ and $\alpha,\delta\geq0$ such that for all finite groups $H$
\begin{equation}\label{eq:Schur-moment-growth}
\int\nu_H(\mathcal{G})d\mu \leq M|H|^\alpha|H_2(H,\Z)|^\delta.
\end{equation}
\end{definition}

\begin{remark}\label{rem:quotient-event}
For a finite group $H$, the event
$$
E(H):=\{\mathcal{G} \in \Omega: \mathcal{G} \twoheadrightarrow H\}=\{\mathcal{G} \in \Omega:\nu_H(\mathcal{G})>0\}
$$
is measurable in every decent probability space.  Moreover,
postcomposition defines a free action of $\text{Aut}(H)$ on $\text{Sur}(\mathcal{G},H)$.
Consequently,
$$
\mathbf 1_{E(H)}(\mathcal{G}) \leq \frac{\nu_H(\mathcal{G})}{|\Aut(H)|}.
$$
In particular, in a decent probability space
\begin{equation}\label{eq:quotient-probability}
\mu(E(H)) \leq \frac{\E_{\mu}[\nu_H]}{|\Aut(H)|} \leq M \cdot \frac{|H|^\alpha \cdot |H_2(H,\Z)|^\delta}{|\Aut(H)|}.
\end{equation}
\end{remark}

\begin{proposition}\label{prop:finite-quotient-detects-generation}
Let $\mathcal{G}$ be a profinite group and let $m \geq 0$ be an integer. Then $d(\mathcal{G})>m$ if and only if
$\mathcal{G}$ has a finite continuous quotient $H$ such that $d(H)>m$.
\end{proposition}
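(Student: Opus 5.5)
We prove the two implications separately; the substance is a compactness argument in the direction ``all finite quotients are $m$-generated $\Rightarrow$ $\mathcal{G}$ is $m$-generated''.

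\emph{Easy direction.} Suppose $\mathcal{G}\twoheadrightarrow H$ continuously with $H$ finite and $d(H)>m$. If $d(\mathcal{G})\leq m$, pick topological generators $g_1,\dots,g_k$ with $k\leq m$. Their images generate a dense subgroup of $H$, which equals $H$ since $H$ is finite (discrete). Then $d(H)\leq m$, a contradiction. We may also pad any generating set with identities, so ``$\leq m$ generators'' and ``exactly $m$ generators'' are interchangeable.

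\emph{Hard direction.} Assume every finite continuous quotient $H$ of $\mathcal{G}$ satisfies $d(H)\leq m$; we show $d(\mathcal{G})\leq m$. Let $\mathcal{N}$ be the set of open normal subgroups of $\mathcal{G}$. For $N\in\mathcal{N}$, set
$$X_N=\{(g_1,\dots,g_m)\in\mathcal{G}^m:\ g_1N,\dots,g_mN \text{ generate } \mathcal{G}/N\}.$$
\begin{enumerate}[label=(\roman*)]
\item Each $X_N$ is nonempty, by hypothesis and padding.
\item Each $X_N$ is closed, indeed open and closed. It is a union of cosets of the open subgroup $N^m$, and there are finitely many such cosets, so its complement is also such a union.
\item The family $\{X_N\}$ has the finite intersection property. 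Indeed, if $N\subseteq N'$ then $X_N\subseteq X_{N'}$, since generators of $\mathcal{G}/N$ map onto generators of $\mathcal{G}/N'$. Given $N_1,\dots,N_k$, the intersection $N=\bigcap N_i$ is again in $\mathcal{N}$, and $X_N\subseteq\bigcap_i X_{N_i}$, so the intersection is nonempty by (i).
\end{enumerate}
Since $\mathcal{G}^m$ is compact, there is a tuple $(g_1,\dots,g_m)\in\bigcap_{N\in\mathcal{N}}X_N$. Let $C$ be the closed subgroup generated by $g_1,\dots,g_m$. Then $CN=\mathcal{G}$ for every $N\in\mathcal{N}$. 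In a profinite group, $C=\bigcap_N CN$ for closed $C$, with $N$ ranging over open normal subgroups. Hence $C=\mathcal{G}$ and $d(\mathcal{G})\leq m$.

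\emph{Remaining points.} The case $m=0$ is consistent: $d(\mathcal{G})>0$ iff $\mathcal{G}\neq 1$ iff some finite quotient is nontrivial. The main point needing care is the standard fact that a closed subgroup equals the intersection of its products with open normal subgroups. It follows because the open normal subgroups form a neighborhood basis of $1$: any $x\notin C$ has an open normal $N$ with $xN\cap C=\emptyset$, since $C$ is closed.
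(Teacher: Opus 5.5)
Your proof is correct and is essentially the paper's argument. The paper phrases the compactness step as the nonemptiness of an inverse limit of the finite nonempty sets of generating $m$-tuples of the quotients $\mathcal{G}/N$, which is equivalent to your finite-intersection-property argument for the clopen sets $X_N\subseteq\mathcal{G}^m$; you also spell out the final step $C=\bigcap_N CN=\mathcal{G}$, which the paper leaves implicit.
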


\begin{proof}
One implication is immediate, since the image of a topologically generating set generates every finite continuous quotient. For the converse one has merely to select in a compatible fashion a set of generators of size $m$ among the ones available at every finite level. This follows at once from the fact that an inverse limit of finite non-empty sets is itself non-empty, which can be easily seen by a compactness argument.
\end{proof}

For $m\geq0$, write
$$
E_m:=\{\mathcal{G} \in \Omega: d(\mathcal{G})>m\}.
$$

\begin{proposition}\label{prop:Em-measurable}
If $(\Omega,\mathcal{B},\mu)$ is decent, then $E_m$ is measurable.
\end{proposition}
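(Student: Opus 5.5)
By Proposition \ref{prop:finite-quotient-detects-generation}, $d(\mathcal{G})>m$ holds exactly when some finite continuous quotient $H$ of $\mathcal{G}$ satisfies $d(H)>m$. Up to isomorphism there are only countably many finite groups. Choose a set $\mathcal{H}_m$ of representatives of the isomorphism classes of finite groups $H$ with $d(H)>m$; this set is countable. The plan is to prove the identity
$$
E_m=\bigcup_{H\in\mathcal{H}_m} E(H),
$$
and then conclude by closure of $\mathcal{B}$ under countable unions.

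The identity follows directly from the proposition. If $\mathcal{G}\in E_m$, it has a finite quotient $H'$ with $d(H')>m$. This $H'$ is isomorphic to some $H\in\mathcal{H}_m$, so $\mathcal{G}\twoheadrightarrow H$ and $\mathcal{G}\in E(H)$. Conversely, if $\mathcal{G}\in E(H)$ with $H\in\mathcal{H}_m$, then $\mathcal{G}$ has the finite quotient $H$ with $d(H)>m$, hence $d(\mathcal{G})>m$.

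Measurability of each $E(H)$ is Remark \ref{rem:quotient-event}: it equals $\{\nu_H>0\}$, which is measurable because $\nu_H$ is measurable by decency. (One should read $\nu_H$ as measurable into $\Z_{\geq0}\cup\{\infty\}$ with its discrete $\sigma$-algebra, or into the extended reals; either way $\{\nu_H>0\}$ is a measurable set.) A countable union of measurable sets is measurable, so $E_m$ is measurable.

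There is no genuine obstacle here. The only points to make explicit are the countability of isomorphism classes of finite groups, which holds because there are finitely many groups of each order, and the fact that $E(H)$ depends only on the isomorphism class of $H$.
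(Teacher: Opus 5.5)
Your proof is correct and is the same as the paper's. Both write $E_m$ as the countable union of the events $E(H)$ over isomorphism classes of finite groups with $d(H)>m$, using Proposition \ref{prop:finite-quotient-detects-generation}. Both then invoke measurability of each $E(H)=\{\nu_H>0\}$ from Remark \ref{rem:quotient-event}.
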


\begin{proof}
There are only countably many isomorphism classes of finite groups of rank bigger than $m$ and Proposition
\ref{prop:finite-quotient-detects-generation} gives
$$
E_m=\bigcup_{H: d(H)>m}E(H).
$$
Each event on the right is measurable by Remark \ref{rem:quotient-event}, so
$E_m$ is measurable.
\end{proof}

Proposition \ref{prop:finite-quotient-detects-generation} suggests detecting $E_m$ using a smaller collection of carefully chosen finite quotients.

\begin{definition}\label{def:m-witnesses}
A countable collection $\{H_i\}_{i\in I}$ of finite groups is a
\emph{set of $m$-witnesses} if, for every profinite group $\mathcal{G}$,
$$
d(\mathcal{G})>m \quad\Longleftrightarrow\quad \mathcal{G}\twoheadrightarrow H_i \text{ for some }i\in I.
$$
Equivalently, in every probability space of profinite groups,
$$
E_m=\bigcup_{i\in I}E(H_i).
$$
\end{definition}

For every finite group $H$, the event $E(H)$ occurring here is measurable in every decent probability space by Remark \ref{rem:quotient-event}.

\begin{proposition}\label{prop:witness-union-bound}
Let $\{H_i\}_{i\in I}$ be a set of $m$-witnesses, and let
$(\Omega,\mathcal{B},\mu)$ be a decent probability space of profinite groups.  Then
$$
\mu(E_m)\leq\sum_{i\in I}\mu(E(H_i)).
$$
If the space is decent with constants
$M,\alpha,\delta$, then
\begin{equation}\label{eq:witness-moment-bound}
\mu(E_m) \leq M \cdot \sum_{i\in I} \frac{|H_i|^\alpha \cdot |H_2(H_i,\Z)|^\delta}{|\Aut(H_i)|}.
\end{equation}
\end{proposition}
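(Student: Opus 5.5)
This is essentially countable subadditivity combined with the quotient bound of Remark~\ref{rem:quotient-event}. I would first use Definition~\ref{def:m-witnesses}, which applies to every profinite group and hence pointwise on $\Omega$, to write
$$
E_m=\bigcup_{i\in I}E(H_i).
$$
Each $E(H_i)$ is measurable by Remark~\ref{rem:quotient-event}, and $I$ is countable by the definition of $m$-witnesses. So the union is a countable union of measurable sets, which agrees with the measurability of $E_m$ already given by Proposition~\ref{prop:Em-measurable}. Countable subadditivity of $\mu$ then gives $\mu(E_m)\leq\sum_{i\in I}\mu(E(H_i))$. This holds even if several $H_i$ are isomorphic or the events overlap, since no disjointness is needed.

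For \eqref{eq:witness-moment-bound}, I would bound each term with \eqref{eq:quotient-probability}. Since postcomposition gives a free action of $\Aut(H_i)$ on $\Sur(\mathcal{G},H_i)$, we have the pointwise inequality $\mathbf 1_{E(H_i)}\leq\nu_{H_i}/|\Aut(H_i)|$. This still holds when $\nu_{H_i}=\infty$. Integrating and applying \eqref{eq:Schur-moment-growth} gives
$$
\mu(E(H_i))\leq M\,\frac{|H_i|^\alpha\,|H_2(H_i,\Z)|^\delta}{|\Aut(H_i)|}.
$$
Summing over $i\in I$ yields the claim, with the convention that the right-hand side may be $+\infty$, in which case there is nothing to prove.

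There is no real obstacle here. The only points needing care are the countability of $I$, which subadditivity requires, and the measurability of each $\nu_{H_i}$, which is built into decency.
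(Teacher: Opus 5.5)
Your proposal is correct and matches the paper's proof: the first bound is countable subadditivity applied to the witness decomposition $E_m=\bigcup_{i\in I}E(H_i)$, and the second follows by bounding each term with \eqref{eq:quotient-probability}.
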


\begin{proof}
The first assertion follows from Definition \ref{def:m-witnesses} and
countable subadditivity.  The second follows by applying
\eqref{eq:quotient-probability} to each $H_i$.
\end{proof}

\begin{remark}\label{rem:all-finite-witnesses}
Proposition \ref{prop:finite-quotient-detects-generation} shows that the
collection of all finite groups $H$ with $d(H)>m$, with one representative
from each isomorphism class, is a set of $m$-witnesses.  Our next goal is to
replace this collection by a substantially more efficient one for which the
series in \eqref{eq:witness-moment-bound} can be estimated.
\end{remark}

\section{Crown theory}\label{sec:crowns}

\subsection{Abstract and concrete monoliths}

\begin{definition}\label{def:monolithic}
A finite group $L$ is \emph{monolithic} if it has a unique minimal normal
subgroup; this subgroup is denoted by $A=\soc(L)$ and called the
\emph{monolith} of $L$.  The group is \emph{monolithic primitive} if
$A\not\leq\Frat(L)$.

For $t\geq1$, the $t$-th \emph{crown-based power} of a monolithic group $L$
is
$$
L_t:=\{(\ell_1,\ldots,\ell_t)\in L^t: \ell_1A=\cdots=\ell_tA\}=L\times_{L/A}\cdots\times_{L/A}L.
$$
Thus $L_t/A^t\cong L/A$, and projection onto any $s\leq t$ coordinates is an epimorphism $L_t\twoheadrightarrow L_s$.
\end{definition}

\begin{definition}\label{def:concrete-monoliths}
A \emph{concrete monolithic datum} is one of the following.
\begin{enumerate}[label=\textup{(\Alph*)}]
\item A prime $p$, a nonzero finite-dimensional $\F_p$-vector space $V$, and
a subgroup $K\leq\GL(V)$ acting faithfully and irreducibly.  We put
$$
L=V\rtimes K
$$
and call $L$ a monolithic group of \emph{abelian type}.  The case $K=1$
forces $V\cong \mathbb{F}_p$.
\item A finite nonabelian simple group $S$, an integer $\rho\geq1$, and a
subgroup
$$
K\leq\Out(S^\rho) \cong\Out(S)\wr\Sym_\rho
$$
whose permutation action on the $\rho$ simple factors is transitive.  If
$$
\pi:\Aut(S^\rho)\twoheadrightarrow\Out(S^\rho)
$$
is the quotient map, we put $L=\pi^{-1}(K)$ and call $L$ a monolithic group of \emph{nonabelian type}.  
\end{enumerate}
\end{definition}
We remark that in the nonabelian case the resulting group extension need not to be a split one. 
\begin{proposition}\label{prop:concrete-monoliths}
A finite group is monolithic primitive if and only if it
is isomorphic to a group arising from construction
\textup{(A)} or \textup{(B)} in
Definition~\ref{def:concrete-monoliths}.
\end{proposition}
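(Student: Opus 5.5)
The proof splits into the two directions of the characterization, and within each direction into the cases $A$ abelian and $A$ nonabelian, where $A=\soc(L)$ is the unique minimal normal subgroup. The basic tool throughout is the centralizer $C_L(A)$: it is normal in $L$, so if it is nontrivial it contains $A$.

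First suppose $L$ is monolithic primitive, and assume $A$ is abelian. Then $A$ is elementary abelian, say $A=V\cong\F_p^n$. Since $A\not\leq\Frat(L)$, there is a maximal subgroup $M$ with $A\not\leq M$, so $L=AM$. The intersection $A\cap M$ is normalized by $M$, and also by $A$ because $A$ is abelian, so it is normal in $L$. By minimality of $A$ and the fact that $A\cap M\neq A$, we get $A\cap M=1$, hence $L=V\rtimes M$.

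Next we show $M$ acts faithfully on $V$. The kernel of the action is $C_M(V)$. It is normalized by $M$ and centralized by $V$, so it is normal in $L$. If it were nontrivial it would contain $A=V$, which is impossible since $V\cap M=1$. So $C_M(V)=1$.

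Irreducibility of the action follows from minimality: an $M$-invariant subspace of $V$ is normal in $L=VM$. Taking $K=M$ gives datum (A). If $K=1$, irreducibility forces $\dim V=1$.

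Now assume $A$ is nonabelian, so $A\cong S^\rho$ for a nonabelian simple group $S$. Here $C_L(A)\cap A=Z(A)=1$, so $C_L(A)$ cannot contain $A$, and by the basic tool $C_L(A)=1$. Hence conjugation embeds $L$ into $\Aut(A)=\Aut(S^\rho)$, with $A$ mapping onto $\Inn(S^\rho)$. We then have $L=\pi^{-1}(K)$ for $K=L/A\leq\Out(S^\rho)$. Transitivity of $K$ on the $\rho$ factors follows from minimality of $A$: the product of the factors in a $K$-orbit is normal in $L$. Note that the Frattini hypothesis is not needed in this case.

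Conversely, start from datum (A), $L=V\rtimes K$. Let $N$ be a minimal normal subgroup of $L$.
\begin{itemize}
\item If $N\cap V=1$, then $[N,V]\leq N\cap V=1$, so $N$ centralizes $V$. But $C_L(V)=V\cdot C_K(V)=V$ by faithfulness, so $N\leq V$, contradicting $N\cap V=1$.
\item Hence $N\cap V\neq1$. This intersection is a $K$-invariant subspace, so by irreducibility it equals $V$. Minimality of $N$ then gives $N=V$.
\end{itemize}
So $V$ is the unique minimal normal subgroup. It is not contained in $\Frat(L)$ because it has the complement $K$, which lies in some maximal subgroup missing $V$. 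When $K=1$ and $V=\F_p$, the group $L$ is cyclic of order $p$, which is fine.

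For datum (B), $L=\pi^{-1}(K)$ contains $\Inn(S^\rho)\cong S^\rho=:A$, and $C_L(A)\leq C_{\Aut(A)}(\Inn A)=1$.
\begin{itemize}
\item Any nontrivial normal subgroup $N$ of $L$ satisfies $N\cap A\neq1$: otherwise $[N,A]\leq N\cap A=1$, so $N$ would centralize $A$, and $C_L(A)=1$.
\item The normal subgroups of $S^\rho$ are the products of subsets of the factors. So $N\cap A$ is a product of factors, and this set of factors is invariant under $L$. By transitivity of $K$, it is all of them, so $N\cap A=A$.
\end{itemize}
Thus $A$ is the unique minimal normal subgroup. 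Finally, $A\not\leq\Frat(L)$ because $\Frat(L)$ is nilpotent while $A$ is not.

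The main point needing care is the faithfulness and centralizer bookkeeping, together with the identification $\Out(S^\rho)\cong\Out(S)\wr\Sym_\rho$. The latter is standard and is already assumed in Definition~\ref{def:concrete-monoliths}.
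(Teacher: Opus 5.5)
Your proof is correct and follows essentially the same route as the paper. In the abelian case you build a complement from a maximal subgroup missing $A$ and show the kernel of the action is normal; in the nonabelian case you use $C_L(A)=1$ to embed $L$ in $\Aut(A)$; and for the converse you use the centralizer identities $C_L(A)=A$, respectively $C_L(A)=1$, with irreducibility or transitivity and the nilpotence of $\Frat(L)$. The only cosmetic difference is in the converse: you start from an arbitrary minimal normal subgroup $N$ and show $N=A$, whereas the paper first shows $A$ is minimal normal and then shows that any normal $N$ with $N\cap A=1$ is trivial.
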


\begin{proof}
Let $L$ be monolithic primitive, with unique minimal
normal subgroup $A$.

Suppose first that $A$ is abelian. Then $A$ is elementary
abelian. Since $A\not\leq\Frat(L)$, there is a maximal
subgroup $U$ of $L$ not containing $A$. Thus $L=AU$.
The subgroup $A\cap U$ is normalized by $U$ and centralized
by $A$, hence is normal in $L$. Minimality of $A$ gives
$A\cap U=1$, so $L=A\rtimes U$. The action of $U$ on $A$
is irreducible by minimality of $A$. Its kernel is
normalized by $U$ and centralized by $A$, hence is normal
in $L$. Since it intersects $A$ trivially, it must be
trivial. Thus the action is faithful, giving
construction \textup{(A)}.

Suppose now that $A$ is nonabelian. Then $A\cong S^\rho$
for a finite nonabelian simple group $S$ and an integer
$\rho\geq1$. The subgroup $C_L(A)$ is normal in $L$ and
satisfies $C_L(A)\cap A=Z(A)=1$, so $C_L(A)=1$.
Conjugation therefore embeds $L$ in $\Aut(A)$, with
$A$ identified with $\Inn(A)$. Since this image contains
$\Inn(A)$, it is the full inverse image of a subgroup
$K\leq\Out(A)$. The product of the simple factors in
any $K$-orbit is normal in $L$, so minimality of $A$
forces this action to be transitive. This gives
construction \textup{(B)}.

Conversely, let $L$ arise from either construction,
and let $A$ denote $V$ in \textup{(A)} and
$\Inn(S^\rho)\cong S^\rho$ in \textup{(B)}.
Irreducibility in \textup{(A)} and transitivity in
\textup{(B)} show that $A$ is minimal normal in $L$.
Moreover, $C_L(A)=A$ in \textup{(A)} and $C_L(A)=1$
in \textup{(B)}. If $N\trianglelefteq L$ satisfies
$N\cap A=1$, then $[N,A]\leq N\cap A=1$; these
centralizer identities therefore imply $N=1$.
Thus $A$ is the unique minimal normal subgroup.

In \textup{(A)}, a maximal subgroup containing the
complement $K$ cannot contain $A$, so
$A\not\leq\Frat(L)$. In \textup{(B)}, $A$ is not
nilpotent, whereas $\Frat(L)$ is nilpotent, so again
$A\not\leq\Frat(L)$. Hence $L$ is monolithic primitive.
\end{proof}
The data in these two descriptions can be recovered
intrinsically from $L$. Let $A$ be its unique minimal
normal subgroup. Conjugation identifies $L/A$ with
$K\leq\Out(A)$. In type \textup{(A)}, we recover $V=A$,
with its natural $\F_p$-vector space structure, and
$\Out(A)=\GL(V)$. In type \textup{(B)}, the simple group
$S$, up to isomorphism, and the integer $\rho$ are
determined by $A\cong S^\rho$.
\begin{remark}
The word \emph{primitive} in Proposition \ref{prop:concrete-monoliths} is essential. A monolithic group whose monolith lies in the Frattini subgroup never occurs in the groups in lists \textup{(A)} and \textup{(B)}. 
\end{remark}

\subsection{The first bad crown power}

Let $L$ be monolithic primitive.  The sequence $d(L_t)$ is nondecreasing and
unbounded \cite{DallaVoltaLucchini,LucchiniGenerators}.  For
$m\geq d(L)$ an integer, define
\begin{equation}\label{eq:def-fLm}
f_L(m):=\min\{t\geq1:d(L_t)>m\}.
\end{equation}
Thus $L_{f_L(m)}$ is the first crown-based power of $L$ which is not $m$-generated.

We record explicit descriptions of this function.  Suppose first that
$L=V\rtimes K$ is of abelian type.  Put
$$
D=\End_K(V)\cong\F_q,\qquad b=\dim_DV,\qquad s=\dim_DH^1(K,V),
$$
and let
$$
\theta=\begin{cases} 0,&\text{if $V$ is the trivial $K$-module},\\ 1,&\text{otherwise}.\end{cases}
$$
The Gasch\"utz--Dalla Volta--Lucchini formula gives
\begin{equation}\label{eq:abelian-generator-formula}
d(L_t)=\max\left\{d(K),\, \theta+\left\lceil\frac{t+s}{b}\right\rceil \right\}.
\end{equation}
Consequently, whenever $d(L)\leq m$, we have that
\begin{equation}\label{eq:abelian-fLm}
f_L(m)=b(m-\theta)-s+1.
\end{equation}
See \cite[Theorem~2.7]{DallaVoltaLucchini}. Formula
\eqref{eq:abelian-generator-formula} also includes the cyclic monolith $L=\mathbb{F}_p$, for which $b=1$, $s=\theta=0$, and $f_L(m)=m+1$.

For a finite group $G$, write
$$
\Gen_m(G):=\{(g_1,\ldots,g_m)\in G^m:\langle g_1,\ldots,g_m\rangle=G\}.
$$

We now give an explicit formula in the
nonabelian case.  Let $A=\soc(L)$, put $K=L/A$, and write
$$\Aut_1(L):=\{\sigma\in\Aut(L): \sigma(\ell)A=\ell A\text{ for every }\ell\in L\}.
$$
For every $m\geq d(L)$, Dalla Volta--Lucchini's conditional generation formula gives the following expression.  Since $K=L/A$ is a quotient of $L$, one has $d(K)\leq d(L)\leq m$; in particular, $\Gen_m(K)\neq\varnothing$.  Thus
\begin{equation}\label{eq:nonabelian-fLm}
f_L(m)=1+\frac{|\Gen_m(L)|}{|\Aut_1(L)|\,|\Gen_m(K)|}.
\end{equation}
The quotient in \eqref{eq:nonabelian-fLm} is an integer.  It counts the $\Aut_1(L)$-orbits of generating lifts to $L$ of a fixed generating $m$-tuple of $K$; the number of lifts is independent of the chosen tuple, and the action is free because a generating tuple determines an automorphism.  Indeed, an $m$-tuple in $L_t$ generates $L_t$ precisely
when its $t$ coordinate tuples generate $L$ and determine distinct such orbits; see \cite[Theorem~2.7]{DallaVoltaLucchini}.  In particular, if
$L=S$ is nonabelian simple, then
\begin{equation}\label{eq:simple-fLm}
f_S(m)=1+\frac{|\Gen_m(S)|}{|\Aut(S)|}.
\end{equation}
Thus, apart from the generation probability and the outer automorphisms, $f_S(m)$ has size $|S|^{m-1}$.
\begin{theorem}\label{thm:crown-witnesses}
Let $m\geq2$, and let $H$ be a finite group with $d(H)>m$.  Then there is a
concrete monolithic primitive group $L$, with $d(L)\leq m$, such that
$$
H \twoheadrightarrow L_{f_L(m)}.
$$
Moreover, for every monolithic primitive group $L$ with $d(L)\leq m$, the following bounds hold.
If $L=V\rtimes K$ is of abelian type and
$b=\dim_{\End_K(V)}V$, then
\begin{equation}\label{eq:abelian-f-gap}
f_L(m)>b(m-2).
\end{equation}
If $A=\soc(L)$ is nonabelian, then
\begin{equation}\label{eq:nonabelian-f-gap}
f_L(m)>|A|^{m-8/5}>|A|^{m-2}.
\end{equation}
\end{theorem}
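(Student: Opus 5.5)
The theorem has three parts: the existence of a crown witness, the abelian gap, and the nonabelian gap. We treat them in that order.

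\textbf{Existence of the witness.} Let $H$ be finite with $d(H)>m$. Replace $H$ by a quotient $\bar H$ with $d(\bar H)>m$ such that every proper quotient of $\bar H$ is $m$-generated; this exists since $H$ is finite. We then invoke the Dalla Volta--Lucchini structure theorem for such ``critical'' groups \cite{DallaVoltaLucchini}: there is a monolithic primitive $L$ and an integer $t$ with $\bar H\cong L_t$. Moreover, the proper quotient $L=L_1$ of $L_t$ (if $t>1$) or $L/A$ is $m$-generated. Since $m\geq 2$, even when $t=1$ the case $\bar H=L$ forces $d(L)\le m$. Indeed, every monolithic primitive group with $m$-generated quotient $L/A$ is $(m+1)$-generated, and more precisely generated by $\max(2,d(L/A))$ elements after possibly adjusting, so $L=L_1$ with $d(L_1)>m$ is impossible when $m\geq 2$. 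This is the classical result that $d(L)\le\max(2,d(L/A))$ for monolithic primitive $L$, where the nonabelian case uses that finite simple groups are $2$-generated, a fact relying on CFSG. Hence $t\ge 2$, $d(L)\le m$, and minimality gives $d(L_{t-1})\le m<d(L_t)$. As $t\mapsto d(L_t)$ is monotone, this yields $t=f_L(m)$. Composing $H\twoheadrightarrow\bar H\cong L_{f_L(m)}$ finishes the first part, and Proposition~\ref{prop:concrete-monoliths} makes $L$ concrete.

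\textbf{Abelian gap.} We use \eqref{eq:abelian-fLm}, which gives $f_L(m)=b(m-\theta)-s+1\ge b(m-1)-s+1$. It therefore suffices to show $s<b+1$, i.e.\ $\dim_D H^1(K,V)\le \dim_D V$, with strict improvement allowed by the $+1$. This is the Guralnick--Hoffman bound \cite{GuralnickHoffman} for faithful irreducible modules, which says $\dim H^1(K,V)\le \dim V$ over $\End_K(V)$. In the trivial-module case $\theta=0$, we have $s=0$ and $b=1$, so $f=m+1>m-2$ directly. Otherwise $f_L(m)\ge b(m-1)-b+1=b(m-2)+1>b(m-2)$.

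\textbf{Nonabelian gap.} We use \eqref{eq:nonabelian-fLm}. The number $|\Gen_m(L)|/|\Gen_m(K)|$ counts generating lifts of a fixed generating tuple of $K$. The total number of lifts is $|A|^m$, so we need a lower bound of the form $|\Gen_m(L)|/|\Gen_m(K)|\ge c\,|A|^m$, together with an upper bound on $|\Aut_1(L)|$. Elements of $\Aut_1(L)$ act trivially on $L/A$, so they are determined by their action on $A$ (since $C_L(A)=1$, conjugation is faithful) and embed in $\Aut(A)$ normalizing the image. This gives roughly $|\Aut_1(L)|\le |A|\cdot|\Out(S)|^{\rho}$ or a similar bound. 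The probability that a random lift generates, given that the image generates $K$, is bounded below by Lucchini--Thakkar \cite{LucchiniThakkar}; this estimate is the source of the exponent $8/5$, and the key input is that $|\Out(S)|$ and the non-generation probability are tiny relative to $|S|^{2/5}$. Combining these gives $f_L(m)-1\ge |A|^{m-1}\cdot(\text{prob})/(|\Out(S)|^\rho\cdots)>|A|^{m-8/5}$. The main obstacle is this last step: making the bound uniform in $\rho$ and $S$, especially for small $S$ such as $A_5$, where one must check $|\Out(S)|\cdot P^{-1}<|S|^{3/5}$ explicitly. I would first try to cite Lucchini--Thakkar's inequality in exactly this form, and fall back on case checks for small simple groups.
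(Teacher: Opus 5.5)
Your existence argument matches the paper's: you obtain $t\geq 2$ from $d(L)\leq\max\{2,d(L/A)\}$, while the paper reads it off from Dalla Volta--Lucchini's Theorem~1.4. Your abelian argument is also correct: $s\leq b$ already gives $f_L(m)\geq b(m-2)+1$, and it follows from the Guralnick--Hoffman bound $s\leq b/2$ that the paper uses. The genuine gap is the nonabelian bound. Write $P=|\Gen_m(L)|/(|A|^m|\Gen_m(K)|)$. Then \eqref{eq:nonabelian-fLm} reads $f_L(m)-1=P\,|A|^m/|\Aut_1(L)|$, so your route needs two estimates, and you establish neither.

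First, the order of $\Aut_1(L)$. Since $C_L(A)=1$, one has $\Aut(L)\cong N_{\Aut(A)}(L)$, and $\Aut_1(L)$ is the full preimage of $C_{\Out(A)}(K)$. Hence $|\Aut_1(L)|=|A|\,|C_{\Out(A)}(K)|\leq\rho\,|\Out(S)|\,|A|$. Your guess $|A|\,|\Out(S)|^{\rho}$ is false in general: for $\Out(S)=1$, $\rho=2$ and $K=\Sym_2$ one gets $|\Aut_1(L)|=2|A|$. With the correct value, $f_L(m)>|A|^{m-8/5}$ follows once $P\,|A|^{3/5}\geq|C_{\Out(A)}(K)|$.

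Second, and decisively, you need a lower bound on $P$ that is uniform over all $S$, all $\rho$, all transitive $K\leq\Out(S)\wr\Sym_\rho$, and all $m\geq\max\{2,d(L)\}$. You never state such a bound, and checking small simple groups cannot supply it. For a fixed $S$ such as $A_5$ there are infinitely many $L$, with unbounded $\rho$ and varying $K$, and $P$ depends on $L$ and $m$, not on $S$ alone.

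The paper sidesteps this counting. Put $f=f_L(m)$ and let $N\cong A$ be one coordinate factor of $\soc(L_f)=A^f$, so that $L_f/N\cong L_{f-1}$ is $m$-generated. If $f\leq|A|^{m-8/5}$, then $\lceil 8/5+\log_{|A|}f\rceil\leq m$. Lucchini--Thakkar's lifting proposition then lifts an $m$-tuple generating $L_f$ modulo $N$ to an $m$-tuple generating $L_f$, contradicting $d(L_f)>m$. The uniform estimate your approach is missing is essentially the content of that proposition in the crown-based setting. To complete your route you would have to reprove it, or cite it in this lifting form, which makes the detour through \eqref{eq:nonabelian-fLm} unnecessary.
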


\begin{proof}
Choose a quotient $H_0$ of $H$ of least order subject to
$d(H_0)>m$. Then
$$
        d(H_0/N)\leq m
        \qquad
        (1\neq N\trianglelefteq H_0).
$$
The Dalla Volta--Lucchini reduction
\cite[Theorem~1.4]{DallaVoltaLucchini} gives
$$
H_0\cong L_t
$$
for a monolithic primitive group $L$ whose monolith is either nonabelian or possessing a complement in $L$, and for some $t\geq2$.  Proposition \ref{prop:concrete-monoliths} puts $L$ in one of the two concrete forms, namely case $(A)$ or case $(B)$ of Definition \ref{def:concrete-monoliths}. Projection onto any $s<t$ coordinates gives a quotient map $L_t\twoheadrightarrow L_s$ with non-trivial kernel.  Minimality of $H_0$ therefore gives $d(L_s)\leq m$ for every $s<t$.  Since $d(L_t)>m$, we have $t=f_L(m)$.  In particular, $L=L_1$ is a proper quotient of $H_0$, so $d(L)\leq m$.

For the bounds, let $L$ now be any monolithic primitive group with $d(L)\leq m$.
Suppose first that $L=V\rtimes K$. If $V$ is the trivial $K$-module, faithfulness forces $K=1$, and \eqref{eq:abelian-fLm} gives $f_L(m)=m+1>b(m-2)$.  If $V$ is a nontrivial $K$-module, the Guralnick--Hoffman theorem gives
$$\dim_{\F_p}H^1(K,V) \leq\frac12\dim_{\F_p}V \qquad\text{\cite[Theorem 1]{GuralnickHoffman}}.
$$
The field $D$ acts on $H^1(K,V)$, so $s\leq b/2$. Since $\theta=1$, \eqref{eq:abelian-fLm} yields
$$ f_L(m)=b(m-1)-s+1 \geq b(m-3/2)+1>b(m-2).
$$

Suppose now that $A$ is nonabelian, and let
$N\cong A$ be one coordinate subgroup of
$A^{f_L(m)}=\soc(L_{f_L(m)})$.  Then
$$
        L_{f_L(m)}/N\cong L_{f_L(m)-1}
$$
is $m$-generated.  If $f_L(m)\leq |A|^{m-8/5}$, then
$$
\left\lceil \frac85+\log_{|A|}f_L(m)  \right\rceil \leq m.
$$
The Lucchini--Thakkar lifting theorem \cite[Proposition 6]{LucchiniThakkar} would then lift an $m$-element
generating tuple modulo $N$ to an $m$-element generating tuple of $L_{f_L(m)}$, a contradiction. This proves
\eqref{eq:nonabelian-f-gap}.
\end{proof}

\begin{corollary}\label{cor:crown-witness-bound}
Let $m\geq2$, and let $\mathcal L_m$ contain one representative from each isomorphism class of concrete monolithic primitive groups $L$ with $d(L)\leq m$.  Then
$$
\{L_{f_L(m)}:L\in\mathcal L_m\}
$$
is a set of $m$-witnesses.  If $(\Omega,\mathcal{B},\mu)$ is decent with constants $M,\alpha,\delta$, then
\begin{align}
\mu(E_m) &\leq M\sum_{L\in\mathcal L_m} \frac{|L_{f_L(m)}|^\alpha |H_2(L_{f_L(m)},\Z)|^\delta}{|\Aut(L_{f_L(m)})|}\label{eq:exact-crown-bound}\\
&\leq M\sum_{L\in\mathcal L_m}\sum_{n\geq f_L(m)}\frac{|L_n|^\alpha|H_2(L_n,\Z)|^\delta}{|\Aut(L_n)|}.  \label{eq:crown-tail-bound}
\end{align}
\end{corollary}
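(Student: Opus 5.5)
The plan is to check the two directions of the witness property and then read off the two bounds from earlier results.

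\emph{Witness property.} Suppose first that $d(\mathcal{G})>m$. By Proposition~\ref{prop:finite-quotient-detects-generation}, $\mathcal{G}$ has a finite continuous quotient $H$ with $d(H)>m$. Theorem~\ref{thm:crown-witnesses} then supplies a concrete monolithic primitive $L$ with $d(L)\leq m$ and $H\twoheadrightarrow L_{f_L(m)}$. Replacing $L$ by its isomorphic representative in $\mathcal L_m$ does not change $L_{f_L(m)}$ up to isomorphism, so $\mathcal{G}\twoheadrightarrow L_{f_L(m)}$ for some $L\in\mathcal L_m$.

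Conversely, if $\mathcal{G}\twoheadrightarrow L_{f_L(m)}$, then $d(\mathcal{G})\geq d(L_{f_L(m)})>m$ by the definition \eqref{eq:def-fLm} of $f_L(m)$. This is well defined because $d(L)\leq m$ and $d(L_t)$ is unbounded.

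\emph{Countability.} There are countably many isomorphism classes of finite groups, so the collection is countable. Hence it is a set of $m$-witnesses in the sense of Definition~\ref{def:m-witnesses}.

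\emph{Bound \eqref{eq:exact-crown-bound}.} This is exactly Proposition~\ref{prop:witness-union-bound} applied to the witness set just constructed. Distinct $L$ might give isomorphic $L_{f_L(m)}$, but that only adds nonnegative terms, so the inequality persists.

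\emph{Bound \eqref{eq:crown-tail-bound}.} For each $L$, the inner sum over $n\geq f_L(m)$ contains the term $n=f_L(m)$, and every other term is nonnegative. Dropping those extra terms recovers \eqref{eq:exact-crown-bound}, so \eqref{eq:crown-tail-bound} follows termwise.

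No step is a real obstacle; all the substance lies in Theorem~\ref{thm:crown-witnesses}. The only points needing care are that $f_L(m)$ is finite, which follows from the unboundedness of $d(L_t)$ cited before \eqref{eq:def-fLm}, and that choosing representatives up to isomorphism is harmless for both the witness property and the sums.
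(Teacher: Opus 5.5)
Your proposal is correct and follows the same route as the paper. The witness property comes from Proposition~\ref{prop:finite-quotient-detects-generation} combined with Theorem~\ref{thm:crown-witnesses}, with the converse direction immediate from the definition \eqref{eq:def-fLm} of $f_L(m)$; \eqref{eq:exact-crown-bound} is then Proposition~\ref{prop:witness-union-bound}, and \eqref{eq:crown-tail-bound} follows by adding nonnegative terms.
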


\begin{proof}
Combining Theorem \ref{thm:crown-witnesses} with Proposition \ref{prop:witness-union-bound} we obtain
\eqref{eq:exact-crown-bound}. Considering that the additional terms are nonnegative gives
\eqref{eq:crown-tail-bound}.
\end{proof}

\section{Estimates}\label{sec:estimates}
Throughout this section, $(\Omega,\mathcal B,\mu)$ is a decent space with constants $M\geq1$ and $\alpha,\delta\geq0$ as in \eqref{eq:Schur-moment-growth}. Put
$$
c_0=\prod_{i=1}^{\infty}(1-2^{-i})>0.
$$

\Needspace{8\baselineskip}
\begin{proposition}\label{prop:crown-automorphisms}
Let $L_t$ be a crown-based power.
\begin{enumerate}[label=\textup{(\roman*)}]
\item Suppose that $L=V\rtimes K$ is of abelian type. Write
$$
D=\End_K(V)\cong\F_q.
$$
Then
$$
|\Aut(L_t)| \geq |\GL_t(q)|=q^{t^2}\prod_{i=1}^t(1-q^{-i}) \geq c_0q^{t^2}.
$$
\item Suppose that $L$ is of nonabelian type, with $A=\soc(L)$. Then
$$
|\Aut(L_t)| \geq |A|^t\cdot t!.
$$
\end{enumerate}
\end{proposition}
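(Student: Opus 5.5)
We will exhibit explicit subgroups of $\Aut(L_t)$ of the required size, working directly from the fibre-product description $L_t=L\times_{L/A}\cdots\times_{L/A}L$.

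\textbf{Case (i).} Write $L=V\rtimes K$, with $K$ acting diagonally, so that $L_t\cong V^t\rtimes K$. Each $g\in\GL_t(D)$ acts on $V^t=V\otimes_D D^t$ through the second factor. Because $D=\End_K(V)$, this action commutes with the diagonal $K$-action. We extend it to $V^t\rtimes K$ by fixing $K$ pointwise. This gives a homomorphism of groups, since it preserves the action relation $k v k^{-1}$, and it is injective because $\GL_t(D)$ acts faithfully on $V\otimes_D D^t$ (as $V\neq0$). Hence $|\Aut(L_t)|\geq|\GL_t(q)|$. The order formula $|\GL_t(q)|=q^{t^2}\prod_{i=1}^t(1-q^{-i})$ is standard, and the product is at least $\prod_{i\geq1}(1-2^{-i})=c_0$ because $q\geq2$.

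\textbf{Case (ii).} Two sources of automorphisms are available.
\begin{itemize}
\item \emph{Coordinate permutations.} $\Sym_t$ permutes the coordinates of $L_t$, and this preserves the fibre-product condition.
\item \emph{Inner automorphisms by the socle.} Conjugation by elements of $A^t\leq L_t$ gives automorphisms. Since $C_L(A)=1$, we have $Z(L_t)=1$, so distinct elements of $A^t$ induce distinct inner automorphisms.
\end{itemize}
Together these generate a group isomorphic to $A^t\rtimes\Sym_t$, since permutations normalize the set of inner automorphisms by $A^t$. It remains to check that the intersection is trivial. A nontrivial permutation $\sigma$ cannot equal conjugation by some $a\in A^t$. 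Take $i$ with $\sigma(i)=j\neq i$, and apply both maps to an element of the form $(1,\dots,1,x,1,\dots,1)$ with $x\in A\setminus\{1\}$ in coordinate $i$. The permutation moves $x$ to coordinate $j$, while conjugation by $a$ keeps it in coordinate $i$. Therefore $|\Aut(L_t)|\geq|A|^t\,t!$.

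The only mildly delicate point is the faithfulness and homomorphism check in (i). Part (ii) is elementary.
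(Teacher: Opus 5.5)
Your proposal is correct and follows the paper's proof essentially step for step. In (i) you embed $\GL_t(D)$ through its action on the $D^t$ factor of $V\otimes_D D^t$, fixing $K$. In (ii) you realize $A^t\rtimes\Sym_t$ inside $\Aut(L_t)$ via conjugation by the socle and coordinate permutations; the only differences are that you get injectivity of $A^t\to\Aut(L_t)$ from $Z(L_t)=1$ (via $C_L(A)=1$) instead of from $A^t$ being centerless, and you spell out the trivial-intersection check that the paper only asserts.
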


\begin{proof}
In the abelian case, $L_t=V^t\rtimes K$. As a $K$-module, $V^t$ may be identified with $V\otimes_DD^t$, where $K$ acts on the first factor. Thus every $\gamma\in\GL_t(D)$ induces the automorphism
$$
(v,k)\longmapsto((1\otimes\gamma)v,k)
$$
of $L_t$. This gives an embedding $\GL_t(q)\hookrightarrow\Aut(L_t)$. Counting ordered bases gives
$$
|\GL_t(q)|=q^{t^2}\prod_{i=1}^t(1-q^{-i}),
$$
and the final inequality follows from $q\geq2$.

Now suppose that $A$ is nonabelian. Since $A$ is a direct product of nonabelian simple groups, $A^t$ is centerless. Conjugation therefore embeds $A^t$ into $\Aut(L_t)$. Coordinate permutations preserve
$$
L_t=\{(\ell_1,\ldots,\ell_t)\in L^t: \ell_1A=\cdots=\ell_tA\}
$$
and give a copy of $\Sym_t$ in $\Aut(L_t)$. The two subgroups intersect trivially, and together they form
\[
A^t\rtimes\Sym_t \leq \Aut(L_t). \qedhere
\]
\end{proof}

\begin{lemma}\label{lem:H2-general-bound}
If $X$ is a finite $d$-generated group, then
$$
|H_2(X,\Z)| \leq |X|^d.
$$
\end{lemma}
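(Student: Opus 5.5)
Write $X\cong F/R$ with $F$ the free group on $d$ generators, and compute $H_2(X,\Z)$ by Hopf's formula:
$$
H_2(X,\Z)\cong \frac{R\cap[F,F]}{[F,R]}.
$$
The bound will follow from bounding the size of $R/[F,R]$, which contains the Schur multiplier as a subgroup.

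The first step is to note that $R$ has finite index $|X|$ in $F$. By the Nielsen--Schreier formula, $R$ is then free of rank $1+|X|(d-1)$. Consequently $R/[F,R]$, which is a quotient of $R^{\mathrm{ab}}$, is a finitely generated abelian group on at most $1+|X|(d-1)$ generators.

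The second step is to pin down the torsion-free part. The group $R/[F,R]$ is central in $F/[F,R]$. Its intersection with the image of $[F,F]$ is $H_2(X,\Z)$, which is finite because $X$ is finite. The quotient of $R/[F,R]$ by $H_2(X,\Z)$ embeds in $F^{\mathrm{ab}}\cong\Z^d$, and has finite index there, since $F/R[F,F]$ is a quotient of the finite group $X$. So $R/[F,R]\cong \Z^d\oplus H_2(X,\Z)$, where the splitting exists because $\Z^d$ is free. It follows that $H_2(X,\Z)$ is generated by at most $1+|X|(d-1)-d=(|X|-1)(d-1)$ elements.

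The third step is to bound the exponent. By Schur's classical result, the exponent of $H_2(X,\Z)$ divides $|X|$. Alternatively, one can use the fact that multiplication by $|X|$ kills $H_n(X,\Z)$ for $n\geq1$, via corestriction and restriction through the trivial subgroup. Hence
$$
|H_2(X,\Z)|\leq |X|^{(|X|-1)(d-1)}.
$$
This is far weaker than the claimed $|X|^d$, so this count is not enough on its own.

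The main obstacle is therefore to reduce the generator count from order $|X|d$ to $d$. My first attempt is a different resolution argument. Take the relation module $R^{\mathrm{ab}}$, which sits in the exact sequence $0\to R^{\mathrm{ab}}\to\Z X^d\to \Z X\to\Z\to0$. Tensoring over $\Z X$ with $\Z$ identifies $H_2(X,\Z)$ with the kernel of $(R^{\mathrm{ab}})_X\to \Z^d$. That kernel is a subquotient of $H_1(X,\Z X^d)$ twisted appropriately, which is not obviously small either.

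So the plan I would actually pursue is to realise $H_2(X,\Z)$ inside an abelian group generated by $d$ elements of order dividing $|X|$. Concretely, I would try the alternative presentation through the Schur cover: a stem extension $1\to M\to \tilde X\to X\to1$ with $M\cong H_2(X,\Z)$ and $M\leq Z(\tilde X)\cap[\tilde X,\tilde X]$. Lifting $d$ generators of $X$ gives $d$ elements $\tilde x_i$ which generate $\tilde X$, because $M$ lies in the Frattini subgroup. Each $\tilde x_i$ lies in a coset of $M$, so $|\tilde X|\leq$ the number of choices needed to determine it. More precisely, I would aim for a bound of the shape $|M|\leq|X|^{d}$ by showing that the map $\Gen_d(\tilde X)\to\Gen_d(X)$ has fibres of size $|M|^d$ and is surjective, and then controlling $|M|$ via the fact that $\tilde X$ is $d$-generated of order $|X||M|$. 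I expect this last counting to be the delicate step. If it fails, I would fall back on the standard literature bound for multipliers of $d$-generated groups, in the form of a $d$-generated $\mathbb{Z}$-module of exponent dividing $|X|$. Any bound of the form $|X|^{O(d)}$ would suffice for the paper's purposes anyway.
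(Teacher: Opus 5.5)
Your proposal has a genuine gap: it never proves the bound. Your first three steps are correct, but, as you note, they only give $|H_2(X,\Z)|\leq|X|^{(|X|-1)(d-1)}$. The step meant to bring this down to $|X|^d$ does not work.

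\begin{itemize}
\item Since $M\leq\Frat(\tilde X)$, the identity $|\Gen_d(\tilde X)|=|M|^d\,|\Gen_d(X)|$ holds for every central Frattini extension, whatever the order of $M$. So this count carries no information about $|M|$.
\item Likewise, $d$-generation of a group of order $|X|\,|M|$ imposes no upper bound on $|M|$ by itself. What would have to be exploited is $M\leq Z(\tilde X)\cap[\tilde X,\tilde X]$, and you give no way to do so beyond the exponent bound.
\item Your fallback is false as stated. The multiplier of a $d$-generated group need not be $d$-generated: for $X=\F_p^d$ one has $H_2(X,\Z)\cong\wedge^2\F_p^d$, of rank $\binom d2>d$ once $d\geq4$. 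So an argument of the form ``exponent divides $|X|$, rank at most $d$'' is unavailable. You need a mechanism that controls the order without a global rank bound.
\item An estimate $|X|^{O(d)}$ may well suffice downstream, but it is not the statement, and you do not prove that either.
\end{itemize}

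The missing idea is to spend the $d$ generators once per chief factor, not through the rank of a free subgroup of index $|X|$. The paper inducts on $|X|$. It picks a minimal normal subgroup $N$, sets $Q=X/N$ (still $d$-generated), and uses the Lyndon--Hochschild--Serre spectral sequence to get
$$
|H_2(X,\Z)|\leq|H_2(Q,\Z)|\,|H_1(Q,H_1(N,\Z))|\,|H_0(Q,H_2(N,\Z))|.
$$
It then shows the last two factors have product at most $|N|^d$, case by case.
\begin{itemize}
\item \emph{$N$ a nontrivial irreducible $\F_p[Q]$-module.} $H_1(Q,N)$ is dual to $H^1(Q,N^\vee)$. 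A $1$-cocycle is determined by its values on $d$ generators, and the coboundaries have order $|N|$, so this factor is at most $|N|^{d-1}$. The dual of $H_0(Q,\wedge^2N)$ is the space of $Q$-invariant alternating forms. It embeds in $\Hom_Q(N,N^\vee)$, of order at most $|N|$ by Schur's lemma.
\item \emph{$N=C_p$ with trivial action.} The exterior square vanishes, and $|H_1(Q,C_p)|\leq p^d$.
\item \emph{$N\cong S^\rho$ nonabelian.} Here $H_1(N,\Z)=0$ and $|H_2(N,\Z)|=|H_2(S,\Z)|^\rho\leq|N|$, using the known sizes of Schur multipliers of simple groups.
\end{itemize}
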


\begin{proof}
For a finite abelian group $B$, write $B^\vee=\Hom(B,\Q/\Z)$, with the contragredient action when $B$ is a module. We argue by induction on $|X|$. The case $X=1$ is immediate, so assume $X\neq1$ and $d\geq1$. Let $N$ be a minimal normal subgroup of $X$, and put $Q=X/N$. The Lyndon--Hochschild--Serre homology spectral sequence gives
\begin{equation}\label{eq:LHS-H2}
|H_2(X,\Z)| \leq |H_2(Q,\Z)|\,|H_1(Q,H_1(N,\Z))|\,|H_0(Q,H_2(N,\Z))|.
\end{equation}
Since $d(Q)\leq d$, induction gives
$$
|H_2(Q,\Z)| \leq |Q|^d.
$$
Suppose first that $N$ is abelian. Then $N$ is an irreducible $\F_p[Q]$-module, and $H_1(Q,N)^\vee\cong H^1(Q,N^\vee)$. A $1$-cocycle is determined by its values on a generating $d$-tuple of $Q$. If the action on $N$ is nontrivial, then $(N^\vee)^Q=0$, so the group of coboundaries has order $|N|$. Thus
$$
|H_1(Q,N)| \leq |N|^{d-1}.
$$
Moreover, $H_2(N,\Z)=\wedge^2_{\F_p}N$. The dual of its coinvariants is the space of $Q$-invariant alternating forms on $N$. This space embeds in $\Hom_Q(N,N^\vee)$ and therefore has order at most $|\End_Q(N)|\leq|N|$. If the action on $N$ is trivial, then $N=C_p$, its exterior square vanishes, and the cocycle count gives $|H_1(Q,N)|\leq|N|^d$. In both cases, the last two factors in \eqref{eq:LHS-H2} have product at most $|N|^d$.

Suppose now that $N=S^\rho$ for a nonabelian finite simple group $S$. Then $H_1(N,\Z)=0$, while
$$
H_2(N,\Z) \cong H_2(S,\Z)^\rho.
$$
The bound $|H_2(S,\Z)|\leq|S|$ for nonabelian finite simple groups \cite[pp.~viii--ix, xv--xvi and 74]{Atlas} gives
$$
|H_0(Q,H_2(N,\Z))| \leq |H_2(N,\Z)| \leq |N|\leq|N|^d.
$$
In either case \eqref{eq:LHS-H2} gives
\[
|H_2(X,\Z)| \leq |Q|^d|N|^d=|X|^d. \qedhere
\]
\end{proof}

\begin{proposition}\label{prop:crown-schur}
Let $L_s$ be a crown-based power.
\begin{enumerate}[label=\textup{(\roman*)}]
\item Suppose that $L=V\rtimes K$ is of abelian type. Write
$$
D=\End_K(V)\cong\F_q, \qquad b=\dim_DV.
$$
Then
\begin{equation}\label{eq:abelian-schur}
|H_2(L_s,\Z)| \leq |K|^{d(K)}\cdot q^{s(s+1)/2+bs/2}.
\end{equation}
If $K=1$, then $L=C_p$, $q=p$, $b=1$, and more precisely
$$
|H_2(L_s,\Z)|=p^{\binom s2}.
$$
\item Suppose that $L$ is of nonabelian type. Put
$$
A=\soc(L),\qquad K=L/A,\qquad N=|A|.
$$
Then
\begin{equation}\label{eq:nonabelian-schur}
|H_2(L_s,\Z)| \leq |K|^{d(K)}N^s.
\end{equation}
\end{enumerate}
\end{proposition}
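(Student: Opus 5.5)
The plan is to use the Lyndon--Hochschild--Serre bound \eqref{eq:LHS-H2} with $N=\soc(L_s)=A^s$ and $Q=L_s/A^s\cong K$. This gives
$$|H_2(L_s,\Z)|\leq |H_2(K,\Z)|\,|H_1(K,H_1(A^s,\Z))|\,|H_0(K,H_2(A^s,\Z))|.$$
Lemma \ref{lem:H2-general-bound} bounds the first factor by $|K|^{d(K)}$. In every case this produces the prefactor $|K|^{d(K)}$, so it remains to bound the other two factors.

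\textbf{Abelian type.} Here $A^s=V^s\cong V\otimes_D D^s$ as a $K$-module.
\begin{enumerate}[label=(\alph*)]
\item \emph{The $H_1$ factor.} We have $H_1(K,V^s)\cong H_1(K,V)^s$, and its dual is $H^1(K,V^\vee)^s$. Since $V^\vee$ is again faithful and irreducible with the same $D$ and $b$, the Guralnick--Hoffman bound applies when $V$ is nontrivial. It gives $|H_1(K,V)|\leq q^{b/2}$, so this factor is at most $q^{bs/2}$. If $V$ is trivial then $K=1$ and the factor vanishes.
\item \emph{The $H_0$ factor.} We have $H_2(V^s,\Z)=\wedge^2_{\F_p}(V^s)$. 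Dually, its $K$-coinvariants correspond to $K$-invariant alternating $\F_p$-forms on $V^s=V\otimes_D D^s$. Each such form gives a $K$-map $V^s\to (V^s)^\vee$. Decompose the form into blocks indexed by pairs $i\leq j$ of coordinates of $D^s$:
\begin{itemize}
\item each off-diagonal block ($i<j$) is a $K$-invariant bilinear form $V\times V\to\F_p$, i.e.\ an element of $\Hom_K(V,V^\vee)$, of order at most $|D|=q$;
\item each diagonal block is an invariant alternating form on $V$, again at most $q$ choices.
\end{itemize}
The total is at most $q^{\binom s2+s}=q^{s(s+1)/2}$.
\end{enumerate}
Multiplying the two factors gives \eqref{eq:abelian-schur}.

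For $K=1$ the group is $L_s=C_p^s$, and the exact formula $H_2(C_p^s,\Z)\cong\wedge^2\F_p^s$ gives order $p^{\binom s2}$ directly (Künneth).

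\textbf{Nonabelian type.} Here $A^s$ is perfect, so $H_1(A^s,\Z)=0$ and the middle factor disappears. For the last factor, Künneth gives $H_2(A^s,\Z)\cong H_2(A,\Z)^s$ because $H_1(A,\Z)=0$. With $A=S^\rho$ we have $H_2(A,\Z)=H_2(S,\Z)^\rho$, and the Atlas bound $|H_2(S,\Z)|\leq|S|$ then gives $|H_2(A^s,\Z)|\leq N^s$. Coinvariants only shrink the group, so the bound passes to $H_0(K,H_2(A^s,\Z))$. This yields \eqref{eq:nonabelian-schur}.

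The step I expect to need the most care is the count of invariant alternating forms on $V\otimes_D D^s$. The forms are $\F_p$-valued, not $D$-valued, and $V^\vee$ need not be isomorphic to $V$. The block decomposition handles both issues, because any block is either zero or an element of a $\Hom_K(V,V^\vee)$ space of size at most $q$. That per-block bound is what keeps the exponent at $s(s+1)/2$ rather than something depending on $b$.
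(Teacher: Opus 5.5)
Your proposal is correct and follows the paper's proof essentially step for step. Both use the Lyndon--Hochschild--Serre bound for $A^s\trianglelefteq L_s$ with quotient $K$, Lemma \ref{lem:H2-general-bound} for $H_2(K,\Z)$, and Guralnick--Hoffman applied to $V^\vee$ via duality for the $H_1$ term. Both then split $\wedge^2V^s$ into diagonal and cross blocks, each bounded by $|\Hom_K(V,V^\vee)|\leq q$, and in the nonabelian case use perfectness of $A$ together with the Atlas bound. The inequality \eqref{eq:LHS-H2} does not need the normal subgroup to be minimal normal, so applying it with $A^s$ is legitimate.
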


\begin{proof}
In the abelian case, the spectral sequence for
$$
1\longrightarrow V^s\longrightarrow L_s\longrightarrow K\longrightarrow1
$$
gives
$$
|H_2(L_s,\Z)| \leq |H_2(K,\Z)|\,|H_1(K,V^s)|\,|H_0(K,\wedge^2V^s)|.
$$
By Lemma \ref{lem:H2-general-bound},
$$
|H_2(K,\Z)| \leq |K|^{d(K)}.
$$
If $K\neq1$, Pontryagin duality and the Guralnick--Hoffman theorem \cite[Theorem 1]{GuralnickHoffman} applied to $V^\vee$ give
$$
|H_1(K,V)| \leq q^{b/2},
$$
and hence $|H_1(K,V^s)|\leq q^{bs/2}$.

Here tensor and exterior powers of $V$ are taken over $\F_p$, where $p=\operatorname{char}(\F_q)$. Write $V^s=V_1\oplus\cdots\oplus V_s$. Then
$$
\wedge^2V^s=\bigoplus_i\wedge^2V_i \oplus \bigoplus_{i<j}V_i\otimes_{\F_p}V_j.
$$
The dual of the coinvariants of each cross-term is $\Hom_K(V,V^\vee)$, whose order is at most $q$ by Schur's lemma. The same bound holds for a diagonal term, since invariant alternating forms form a subspace of $\Hom_K(V,V^\vee)$. Consequently,
$$
|H_0(K,\wedge^2V^s)| \leq q^{s+\binom s2}=q^{s(s+1)/2}.
$$
This proves \eqref{eq:abelian-schur} when $K\neq1$. When $K=1$, faithfulness and irreducibility force $V=\mathbb{F}_p$, and $L_s=\mathbb{F}_p^s$, so $H_2(L_s,\Z)=\wedge^2\mathbb{F}_p^s$.

In the nonabelian case, $A$ is perfect. The spectral sequence for $1\to A^s\to L_s\to K\to1$ therefore gives
$$
|H_2(L_s,\Z)| \leq |H_2(K,\Z)|\,|H_0(K,H_2(A^s,\Z))|.
$$
Writing $A=S^\rho$ and using $|H_2(S,\Z)|\leq|S|$, we obtain
$$
|H_2(A^s,\Z)|=|H_2(S,\Z)|^{\rho s} \leq |A|^s=N^s.
$$
Lemma \ref{lem:H2-general-bound} completes the proof.
\end{proof}

The preceding automorphism and multiplier bounds give a quadratic saving in the abelian case when $\delta<2$. For general $\delta$, we estimate the event $\mathcal G\twoheadrightarrow L_t$ using the moment of a smaller crown $L_s$.

\begin{lemma}\label{lem:amplification}
Suppose that $L=V\rtimes K$ is of abelian type, and put $D=\End_K(V)\cong\F_q$. For $1\leq s\leq t$,
\begin{equation}\label{eq:sur-count}
|\Sur(L_t,L_s)| \geq \prod_{i=0}^{s-1}(q^t-q^i) \geq c_0q^{st}.
\end{equation}
Consequently, in every decent probability space,
\begin{equation}\label{eq:amplified-bound}
\mu(E(L_t)) \leq c_0^{-1}q^{-st}\int\nu_{L_s}(\mathcal{G})\,d\mu.
\end{equation}
\end{lemma}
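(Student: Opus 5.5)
The plan is to build surjections $L_t\twoheadrightarrow L_s$ directly from the $D$-linear structure, as in the proof of Proposition \ref{prop:crown-automorphisms}(i), and then to reuse the free-action argument of Remark \ref{rem:quotient-event}, with $\Sur(L_t,L_s)$ playing the role of $\Aut(H)$.

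\textbf{Constructing surjections.} Identify $L_t=V^t\rtimes K$ with $(V\otimes_D D^t)\rtimes K$, where $K$ acts on the first tensor factor. Every $D$-linear map $\gamma:D^t\to D^s$ yields a $K$-equivariant map $1\otimes\gamma:V\otimes_D D^t\to V\otimes_D D^s$. Hence
$$
\phi_\gamma:(v,k)\longmapsto((1\otimes\gamma)v,k)
$$
is a homomorphism $L_t\to L_s$; it is a homomorphism precisely because $1\otimes\gamma$ commutes with the $K$-action. If $\gamma$ is surjective, then so is $1\otimes\gamma$, and therefore so is $\phi_\gamma$. Moreover $\gamma\mapsto\phi_\gamma$ is injective, since $V\neq0$ means $1\otimes\gamma$ determines $\gamma$.

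The surjective $\gamma$ are the full-rank $s\times t$ matrices over $\F_q$. Counting them row by row, each new row must avoid the span of the previous ones, which gives $\prod_{i=0}^{s-1}(q^t-q^i)$. Factoring,
$$
\prod_{i=0}^{s-1}(q^t-q^i)=q^{st}\prod_{i=0}^{s-1}(1-q^{i-t}).
$$
The factors $(1-q^{i-t})$ for $0\le i\le s-1$ are $1-q^{-j}$ with $j$ running over $t-s+1,\dots,t$, all distinct and at least $1$. So the product is at least $\prod_{j\ge1}(1-2^{-j})=c_0$, using $q\ge2$. This proves \eqref{eq:sur-count}.

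\textbf{The moment bound.} Precomposition gives a map
$$
\Sur(\mathcal G,L_t)\times\Sur(L_t,L_s)\to\Sur(\mathcal G,L_s),\qquad (\pi,\phi)\mapsto\phi\circ\pi.
$$
Fix one $\pi\in\Sur(\mathcal G,L_t)$. Since $\pi$ is surjective, $\phi\circ\pi$ determines $\phi$, so $\phi\mapsto\phi\circ\pi$ is injective. Hence $\mathcal G\twoheadrightarrow L_t$ implies
$$
\nu_{L_s}(\mathcal G)\ge|\Sur(L_t,L_s)|\ge c_0q^{st}.
$$
Therefore $\mathbf 1_{E(L_t)}\le c_0^{-1}q^{-st}\nu_{L_s}$ pointwise. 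Integrating gives \eqref{eq:amplified-bound}; measurability comes from Remark \ref{rem:quotient-event}.

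\textbf{Main obstacle.} No step is genuinely hard. The points needing care are the $K$-equivariance of $1\otimes\gamma$, which rests on the $K$-action and the $D$-action on $V$ commuting because $D=\End_K(V)$, and the uniform lower bound by $c_0$ for partial products of the $q$-Pochhammer symbol.
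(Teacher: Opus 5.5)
Your proposal is correct and follows the paper's proof essentially step for step. The paper likewise identifies $V^t$ with $V\otimes_D D^t$ and lets each surjective $D$-linear map $D^t\twoheadrightarrow D^s$ induce an epimorphism $L_t\twoheadrightarrow L_s$ fixing the complement $K$. It counts these as $\prod_{i=0}^{s-1}(q^t-q^i)\geq c_0q^{st}$, then composes them with one fixed epimorphism $\mathcal G\twoheadrightarrow L_t$ to get the pointwise bound $\mathbf 1_{E(L_t)}\,c_0q^{st}\leq\nu_{L_s}$, and integrates. Your explicit checks that $\gamma\mapsto\phi_\gamma$ is injective and that the partial product is at least $c_0$ are small details the paper leaves implicit.
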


\begin{proof}
After identifying $V^t$ with $V\otimes_DD^t$, every surjective $D$-linear map $D^t\twoheadrightarrow D^s$ induces an epimorphism $L_t\twoheadrightarrow L_s$ which fixes the standard complement $K$ pointwise. The number of such maps is the product in \eqref{eq:sur-count}, and
$$
\prod_{i=0}^{s-1}(q^t-q^i)=q^{st}\prod_{i=0}^{s-1}(1-q^{i-t}) \geq c_0q^{st}.
$$
On $E(L_t)$, fix one epimorphism $\mathcal{G}\twoheadrightarrow L_t$ and compose it with these maps. Since the fixed map is surjective, the resulting epimorphisms $\mathcal{G}\twoheadrightarrow L_s$ are distinct. Thus, pointwise,
$$
\mathbf 1_{E(L_t)}(\mathcal{G})\,c_0q^{st} \leq \nu_{L_s}(\mathcal{G}).
$$
Integration gives \eqref{eq:amplified-bound}.
\end{proof}

For an integer $m\geq3$, let $\mathcal L_m$ be as in Corollary \ref{cor:crown-witness-bound}. For $L\in\mathcal L_m$, put
$$
t_m(L)=
\begin{cases}
b(m-2)+1, & L=V\rtimes K\text{ is of abelian type},\\
|\soc(L)|^{m-2}+1, & \soc(L)\text{ is nonabelian},
\end{cases}
$$
where $b=\dim_{\End_K(V)}V$ in the first case. Define
$$
B_m^{\mathrm{ab}}=\bigcup_{\substack{L\in\mathcal L_m\\L\text{ of abelian type}}}E(L_{t_m(L)}),
\qquad
B_m^{\mathrm{nonab}}=\bigcup_{\substack{L\in\mathcal L_m\\L\text{ of nonabelian type}}}E(L_{t_m(L)}).
$$
These are countable unions of measurable events. For fixed $L$, coordinate projections give
$$
E(L_{t+1})\subseteq E(L_t),\qquad
\bigcup_{t>R}E(L_t)=E(L_{R+1})\quad(R\in\Z_{\geq0}).
$$
Thus it suffices to estimate one multiplicity for each $L$.

\begin{proposition}\label{prop:abelian-crown-estimate}
Put $\Delta=\max\{1,\delta\}$ and
\begin{equation}\label{eq:gaussian-coefficient}
\kappa_\delta=
\begin{cases}
1-\delta/2, & 0\leq\delta\leq1,\\
1/(2\delta), & \delta>1.
\end{cases}
\end{equation}
For integers $m\geq3$, set
$$
\beta_m=\kappa_\delta(m-2)^2-(1+\delta)m-\alpha
-\frac{(\alpha+\delta)(m-1)}{\Delta}-\frac\delta2.
$$
For all sufficiently large $m$, depending only on $\alpha,\delta$,
\begin{equation}\label{eq:abelian-final}
\mu(B_m^{\mathrm{ab}})\leq\frac{4M}{c_0}\,2^{-\beta_m}.
\end{equation}
\end{proposition}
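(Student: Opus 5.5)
We bound $\mu(B_m^{\mathrm{ab}})$ by a union bound over abelian-type $L\in\mathcal L_m$. For each such $L=V\rtimes K$ we estimate $\mu(E(L_t))$ with $t=t_m(L)=b(m-2)+1$ via Lemma~\ref{lem:amplification}, using an auxiliary multiplicity $s\leq t$ to be chosen, namely $s\approx t/\Delta$. The decency bound \eqref{eq:Schur-moment-growth} together with Proposition~\ref{prop:crown-schur}(i) gives
$$\mu(E(L_t))\leq \frac{M}{c_0}\,q^{-st}\,|L_s|^\alpha\Big(|K|^{d(K)}q^{s(s+1)/2+bs/2}\Big)^{\delta}.$$
We have $|L_s|=q^{bs}|K|$, and since $d(K)\leq d(L)\leq m$, the $K$-dependent factor is at most $|K|^{\alpha+\delta m}$. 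The exponent of $q$ is then
$$-st+\alpha bs+\delta\big(s(s+1)/2+bs/2\big).$$

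We choose $s$ to optimise this exponent. For $\delta\leq 1$ we take $s=t$, giving roughly $-(1-\delta/2)t^2$. For $\delta>1$ we take $s\approx t/\delta$, giving roughly $-t^2/(2\delta)$. This explains $\kappa_\delta$. Because $t\geq b(m-2)$ and $q^{b}=|V|$, the main term is at most $|V|^{-\kappa_\delta b(m-2)^2}$, up to lower-order terms linear in $t$ which produce the $-(1+\delta)m-\alpha-\dots$ corrections in $\beta_m$. Since $b\geq1$ and $q\geq2$, we should obtain a bound of the shape $|V|^{-\beta_m}\,|K|^{\alpha+\delta m}$, or something comparable where a power of $|V|$ absorbs the $|K|$ factor.

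The main obstacle is summing over all abelian-type monoliths, i.e.\ over all primes $p$, all irreducible faithful $V$, and all $K\leq\GL(V)$. Here we use the sizes directly. Since $K\leq \GL(V)$, we have $|K|\leq |V|^{\dim_{\F_p}V}$, which is too weak, so we use instead that $K$ is $m$-generated. The number of conjugacy classes of $m$-generated subgroups $K\leq\GL(V)$ is at most $|\GL(V)|^m\leq |V|^{m\dim V}$, and this needs to be beaten by $|V|^{\kappa_\delta b (m-2)^2}$.

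The difficulty is that $b=\dim_D V$ can be much smaller than $\dim_{\F_p}V$. We would first try to use that $L$ determines the datum up to isomorphism, and to count pairs $(V,K)$ via $m$-tuples in $\GL(V)$ modulo conjugation. The count of generating tuples of $K$ is at least $1$ and at most $|K|^m$, so a tuple-counting bound of $|\GL_b(q')|^m$-type, with $K$ realised inside $\GL_{\dim V}(p)$, is what must be controlled.

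The fallback is to use the extra factor $|\Aut(L_t)|$. Proposition~\ref{prop:crown-automorphisms} shows it contains $\GL_t(q)$. The full automorphism group also contains the $V^t$ translations and $N_{\GL(V)}(K)$, and the outer ones give orbits of size about $|\GL(V)|/|N(K)|$ when $L$ is counted as a datum. This allows the sum over $K$ to be reorganised as a sum over tuples divided by $|\GL(V)|$, which costs at most $|V|^{m\dim V}/|\GL(V)|$ per $V$. After handling this, the remaining sum over $p$ and $\dim V$ of $2^{-\beta_m\dim V}\cdot(\text{polynomial})$ should converge to at most $4\cdot 2^{-\beta_m}$ for $m$ large. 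Getting the bookkeeping to produce exactly the stated $\beta_m$, including the $(\alpha+\delta)(m-1)/\Delta$ term coming from the $K$-factors at level $s\approx t/\Delta$, is where I expect the real work to lie.
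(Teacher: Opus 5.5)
\paragraph{A gap in the sum over monoliths.}
Your setup matches the paper's. You use Lemma~\ref{lem:amplification} with $t=b(m-2)+1$ and $s=\lfloor t/\Delta\rfloor$, together with the decency bound and Proposition~\ref{prop:crown-schur}(i). This gives $\mu(E(L_t))\le Mc_0^{-1}|K|^{\alpha+\delta m}q^{-st+\delta s^2/2+c_bs}$ with $c_b=\alpha b+\delta(b+1)/2$, and the choice of $s$ yields $\kappa_\delta$. The gap is in the step you flag yourself, summing over the monoliths, and your fallback does not close it.

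You count $m$-generated $K\le\GL(V)=\GL_n(p)$ with $n=\dim_{\F_p}V$, which gives order $p^{mn^2}$. Dividing by $N_{\GL(V)}(K)$-orbits through extra automorphisms of $L_t$ saves one factor $|\GL(V)|$, so roughly $p^{(m-1)n^2}$ remains. The saving is only $q^{-\kappa_\delta t^2}\approx p^{-\kappa_\delta nb(m-2)^2}$. When $b=1$ (for instance $V=\F_{p^n}$ and $K=\F_{p^n}^\times$, a legitimate datum with $d(L)=2$ for every $n$), this is $p^{-\kappa_\delta n(m-2)^2}$. The exponent is linear in $n$, while the cost is quadratic in $n$. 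So for each fixed $m$ your series over $n$ diverges, and the argument as proposed gives no finite bound, let alone $4Mc_0^{-1}2^{-\beta_m}$.

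\paragraph{The missing observation.}
$K$ commutes with $D=\End_K(V)\cong\F_q$, so $K\le\GL_D(V)\cong\GL_b(q)$ and $|K|\le q^{b^2}=|V|^b$.

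The paper therefore indexes abelian-type data by $(q,b)$ together with a subgroup $K\le\GL_b(q)$. It sums over all such subgroups; dropping irreducibility and the condition $\End_K(V)=\F_q$ only enlarges the sum. Counting generating $m$-tuples gives $\sum_{K\le\GL_b(q),\,d(K)\le m}|K|^{\alpha+\delta m}\le|\GL_b(q)|^{a_m}\le q^{a_mb^2}$, where $a_m=(1+\delta)m+\alpha$.

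Now the head count and the saving $q^{-\kappa_\delta t^2}\le q^{-\kappa_\delta b^2(m-2)^2}$ are both powers of $q^{b^2}$. Hence each pair $(q,b)$ contributes at most $Mc_0^{-1}q^{-\beta_mb^2}$. Summing over prime powers $q$ and $b\ge1$ with the integral test gives the stated bound. No automorphisms beyond the $D$-linear maps of Lemma~\ref{lem:amplification} are needed.

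\paragraph{Bookkeeping.}
This also corrects your attribution of the terms in $\beta_m$. The term $-(1+\delta)m-\alpha$ is exactly the head-count exponent $a_m$. The term $-(\alpha+\delta)(m-1)/\Delta$ comes from $c_bs\le(\alpha+\delta)bt/\Delta$ together with $t\le b(m-1)$. The term $-\delta/2$ comes from the rounding in $s$. Neither of the last two comes from $K$-factors.
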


\begin{proof}
Write $L=V\rtimes K$, $D=\End_K(V)\cong\F_q$, and $b=\dim_DV$. Then $|V|=q^b$, $K\leq\GL_b(q)$, and $|L_s|=|K|q^{bs}$. For $1\leq s\leq t$, Lemma \ref{lem:amplification}, the moment hypothesis, and Proposition \ref{prop:crown-schur} give
\begin{equation}\label{eq:one-abelian-crown}
\mu(E(L_t))\leq Mc_0^{-1}|K|^{\alpha+\delta m}
q^{-st+\delta s^2/2+c_bs},\qquad
c_b=\alpha b+\frac{\delta(b+1)}2.
\end{equation}
Here we used $d(K)\leq d(L)\leq m$.

Put $a_m=(1+\delta)m+\alpha$. For fixed $q,b$, every abelian datum is represented by a subgroup of $\GL_b(q)$ after choosing an isomorphism $D\cong\F_q$ and a $D$-basis of $V$. We may sum over all subgroups, without imposing irreducibility or the condition $\End_K(V)=\F_q$. Counting generating $m$-tuples gives
\begin{equation}\label{eq:head-count}
\sum_{\substack{K\leq\GL_b(q)\\d(K)\leq m}}|K|^{\alpha+\delta m}
\leq |\GL_b(q)|^{a_m}\leq q^{a_mb^2}.
\end{equation}

Take $t=b(m-2)+1$ and $s=\lfloor t/\Delta\rfloor$. For sufficiently large $m$, uniformly in $b\geq1$, one has $1\leq s\leq t$. If $\delta\leq1$, then $s=t$. If $\delta>1$, then
$$
-st+\frac\delta2s^2=-\frac{t^2}{2\delta}
+\frac\delta2\left(\frac t\delta-s\right)^2.
$$
Since $c_b\leq(\alpha+\delta)b$, in both cases
$$
-st+\frac\delta2s^2+c_bs
\leq-\kappa_\delta t^2+\frac{\alpha+\delta}{\Delta}bt+\frac\delta2.
$$
Using $b(m-2)\leq t\leq b(m-1)$ and $b\geq1$, we obtain
$$
a_mb^2-st+\frac\delta2s^2+c_bs\leq-\beta_mb^2.
$$
It follows from \eqref{eq:one-abelian-crown} and \eqref{eq:head-count} that
\begin{equation}\label{eq:abelian-sum-2}
\mu(B_m^{\mathrm{ab}})\leq Mc_0^{-1}\sum_{b\geq1}\sum_q q^{-\beta_mb^2},
\end{equation}
where $q$ ranges over prime powers. For every real $x\geq2$, the integral test gives
$$
\sum_{n=2}^{\infty}n^{-x}
\leq2^{-x}\left(1+\frac2{x-1}\right)\leq3\cdot2^{-x}.
$$
Hence, when $\beta_m\geq2$,
$$
\sum_{b\geq1}\sum_q q^{-\beta_mb^2}
\leq3\sum_{b\geq1}2^{-\beta_mb^2}
\leq\frac{3\cdot2^{-\beta_m}}{1-2^{-\beta_m}}
\leq4\cdot2^{-\beta_m}.
$$
Since $\beta_m\to\infty$, this proves the proposition.
\end{proof}

\begin{proposition}\label{prop:nonabelian-crown-estimate}
For every integer $m\geq\alpha+\delta+4$,
\begin{equation}\label{eq:nonabelian-final}
\mu(B_m^{\mathrm{nonab}})
\leq M\sum_{N\geq60}N^{-N^{m-2}}
\leq\frac{60M}{59}\,60^{-60^{m-2}}.
\end{equation}
\end{proposition}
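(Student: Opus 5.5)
The plan is to bound $\mu(B_m^{\mathrm{nonab}})$ by a union bound over $L\in\mathcal L_m$ of nonabelian type, applying \eqref{eq:quotient-probability} to $H=L_t$ with $t=t_m(L)=N^{m-2}+1$, where $N=|A|=|\soc(L)|$. First I would group the $L$ by the value of $N$. Then I would show that the total contribution of all $L$ with a given $N\geq 60$ is at most $M N^{-N^{m-2}}$. The final inequality in \eqref{eq:nonabelian-final} is then a routine tail estimate. Since $N\mapsto N^{-N^{m-2}}$ decays superexponentially, the sum over $N\geq 60$ is at most $60^{-60^{m-2}}\sum_{k\geq0}60^{-k}=\tfrac{60}{59}60^{-60^{m-2}}$, using $N^{-N^{m-2}}\leq 60^{-60^{m-2}}\cdot 60^{-(N-60)}$.

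For a single $L$, write $K=L/A$. Then $|L_t|=|K|N^t$. Proposition \ref{prop:crown-schur}(ii) together with $d(K)\leq d(L)\leq m$ gives $|H_2(L_t,\Z)|\leq |K|^{m}N^t$, and Proposition \ref{prop:crown-automorphisms}(ii) gives $|\Aut(L_t)|\geq N^t\,t!$. Hence
$$
\mu(E(L_t))\leq M\,\frac{|K|^{\alpha+\delta m}N^{(\alpha+\delta)t}}{N^t\,t!}.
$$
I would use $|K|\leq|\Out(S^\rho)|=|\Out(S)|^\rho\rho!<N$, which follows from the standard facts $|\Out(S)|<|S|^{1/2}$ and $\rho!\leq |S|^{\rho/2}$. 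I would also use $t!\geq (t/e)^t\geq (N^{m-2}/e)^t$. Together these give
$$
\mu(E(L_t))\leq M\,N^{\alpha+\delta m}\,e^{t}\,N^{-(m-1-\alpha-\delta)t}.
$$
The hypothesis $m\geq\alpha+\delta+4$ makes the exponent of $N$ at most $-3t$. This leaves a spare factor of roughly $N^{-2t}$ beyond the target $N^{-t}\leq N^{-N^{m-2}}$. The spare factor absorbs $e^t$, $N^{\alpha+\delta m}$, and the number of isomorphism classes of $L$ with $|\soc(L)|=N$.

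The step I expect to need the most care is counting the monolithic groups with a given $N$. Here $(S,\rho)$ is determined up to at most $\log_{60}N$ choices, because $N=|S|^\rho$ with $|S|\geq 60$. By Proposition \ref{prop:concrete-monoliths}, $L$ is determined by a subgroup $K\leq\Out(S^\rho)$. A group of order $<N$ has at most $N^{\log_2 N}$ subgroups, since every subgroup is generated by $\leq\log_2 N$ elements. So the count is at most $N^{1+\log_2 N}$. This is negligible against the spare $N^{-t}$, because $t\geq N+1$ when $m\geq 3$, and $N^{-N}$ dominates $N^{\log_2 N+1+\alpha+\delta m}e^{N^{m-2}+1}$ only after checking that $e^{t}N^{-t}\leq (e/60)^t$ is tiny. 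I would verify the explicit inequality $N^{2+\log_2N+\alpha+\delta m}(e/N)^{t}\leq 1$ for $N\geq60$ and $t\geq N^{m-2}$, with $m\geq\alpha+\delta+4$; since $t\geq N^{m-2}\geq 60^{m-2}$ grows much faster than $\delta m\log N$, this should be a short elementary check. Summing over $N$ then gives the first inequality in \eqref{eq:nonabelian-final}.
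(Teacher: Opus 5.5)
Your overall route is the paper's. You take a union bound over nonabelian $L$ with $t=N^{m-2}+1$, use the per-crown estimate $M|K|^{\alpha+\delta m}N^{(\alpha+\delta-1)t}/t!$ from Propositions~\ref{prop:crown-automorphisms}(ii) and~\ref{prop:crown-schur}(ii), apply Stirling, count the $L$ with fixed $N$, and finish with the tail sum via $N^{m-2}\geq 60^{m-2}+N-60$. But the step $|K|\leq|\Out(S^\rho)|=|\Out(S)|^\rho\rho!<N$ is false. The inequality $\rho!\leq|S|^{\rho/2}$ fails once $\rho$ is large relative to $|S|^{1/2}$; for $|S|=60$ it fails for every $\rho\geq19$. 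Moreover $|K|$ really can exceed $N$. Take $S=A_5$ and $K\cong\Sym_\rho$ the top group of $\Out(A_5)\wr\Sym_\rho$, so $L=A_5\wr\Sym_\rho$. This $L$ is generated by two generators of $\Sym_\rho$ and two generators of the first coordinate $A_5$, so it lies in $\mathcal L_m$ for every $m\geq4$. Yet $|K|=\rho!>60^\rho=N$ for $\rho\geq200$. Both places where you use the bound therefore break: the estimate $|K|^{\alpha+\delta m}\leq N^{\alpha+\delta m}$, and the count of at most $N^{\log_2N}$ subgroups of $\Out(S^\rho)$. (Separately, two nonisomorphic simple groups can share an order, so there are up to $2\log_{60}N$ pairs $(S,\rho)$; this is harmless.)

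The gap is repairable because the margin is huge. Correctly, $|\Out(S^\rho)|\leq|\Out(S)|^\rho\rho!\leq N^{1/2}\rho^\rho$. Since $\rho\leq\log_{60}N$ and $N\geq60^\rho$, one has $\rho^\rho\leq N^{\log_{60}\rho}$. Hence $|K|\leq N^{1/2+\log_{60}\log_{60}N}$, and $\Out(S^\rho)$ has at most $N^{O(\log N(\log\log N)^2)}$ subgroups. Both are still swamped by your spare factor $N^{-t}$ with $t>N^{m-2}\geq N^2$ (recall $m\geq4$).

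The paper avoids any structural bound on $\Out(S^\rho)$. Conjugation embeds $L$ in $\Aut(A)\leq\Sym_N$, so $|K|\leq N!$. Since $d(L)\leq m$, counting generating $m$-tuples of $\Sym_N$ gives at most $(N!)^m$ isomorphism classes. The total weight at fixed $N$ is then at most $N^{a_mN}$ with $a_m=(1+\delta)m+\alpha$, and $a_mN\leq N^{m-2}$ follows from $m\geq\alpha+\delta+4$. Note that the crude bound $|K|\leq N!$ would not combine with your subgroup count. That count would then be roughly $N^{N^2\log_2N}$, too large when $m=4$. Counting generating tuples is what makes the crude bound work.
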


\begin{proof}
Write $A=\soc(L)$, $N=|A|\geq60$, and $K=L/A$. For $t=N^{m-2}+1$, Remark \ref{rem:quotient-event} and Propositions \ref{prop:crown-automorphisms} and \ref{prop:crown-schur} give
\begin{equation}\label{eq:one-nonabelian-crown}
\mu(E(L_t))\leq M|K|^{\alpha+\delta m}
\frac{N^{(\alpha+\delta-1)t}}{t!}.
\end{equation}
Since $N>e$ and $t>N^{m-2}$,
$$
t!\geq(t/e)^t\geq N^{(m-3)t},\qquad
\frac{N^{(\alpha+\delta-1)t}}{t!}\leq N^{-2t}.
$$
The last inequality uses $m\geq\alpha+\delta+4$.

Conjugation on $A$ embeds $L$ in $\Aut(A)$, and hence in $\Sym_N$. Since $d(L)\leq m$, the number of possible isomorphism classes of $L$ with this value of $N$ is at most $(N!)^m$, by counting generating $m$-tuples in $\Sym_N$. Also $|K|\leq|L|\leq N!$. Put $a_m=(1+\delta)m+\alpha$. Then
\begin{equation}\label{eq:nonabelian-head-count}
\sum_{\substack{L\in\mathcal L_m\text{ of nonabelian type}\\|\soc(L)|=N}}
|K|^{\alpha+\delta m}\leq(N!)^{a_m}\leq N^{a_mN}.
\end{equation}
Consequently,
$$
\mu(B_m^{\mathrm{nonab}})\leq M\sum_{N\geq60}N^{a_mN-2N^{m-2}}.
$$
Our hypothesis on $m$ gives
$$
a_m\leq(1+m-4)m+(m-4)<m^2\leq60^{m-3}.
$$
Thus $a_mN\leq N^{m-2}$ for every $N\geq60$, proving the first inequality in \eqref{eq:nonabelian-final}. Finally, for integer $N\geq60$ and $m\geq3$,
$$
N^{m-2}\geq60^{m-2}+N-60.
$$
Therefore
\[
\sum_{N\geq60}N^{-N^{m-2}}
\leq60^{-60^{m-2}}\sum_{j\geq0}60^{-j}
=\frac{60}{59}\,60^{-60^{m-2}}.\qedhere
\]
\end{proof}

\begin{proof}[Proof of Theorem \ref{thm:finite-generation}]
By Proposition \ref{prop:finite-quotient-detects-generation} and Theorem \ref{thm:crown-witnesses}, every $\mathcal G\in E_m$ has a quotient $L_{f_L(m)}$ with $L\in\mathcal L_m$. The bounds in Theorem \ref{thm:crown-witnesses} give $f_L(m)\geq t_m(L)$, so coordinate projection yields
\begin{equation}\label{eq:crown-cover}
E_m\subseteq B_m^{\mathrm{ab}}\cup B_m^{\mathrm{nonab}}.
\end{equation}
Propositions \ref{prop:abelian-crown-estimate} and \ref{prop:nonabelian-crown-estimate} imply $\mu(E_m)\to0$. The events $E_m$ are measurable by Proposition \ref{prop:Em-measurable} and decrease to $\{\mathcal G:d(\mathcal G)=\infty\}$. Continuity from above proves the theorem.
\end{proof}

\begin{corollary}\label{cor:gaussian-rank-tail}
Let $(\Omega,\mathcal B,\mu)$ be a decent space with constants $M,\alpha,\delta$, and let $\kappa_\delta$ be as in \eqref{eq:gaussian-coefficient}. There exist constants $C_1,C_2>0$, depending only on $\alpha,\delta$, such that, for every integer $m\geq0$,
$$
\mu\{\mathcal G:d(\mathcal G)>m\}
\leq C_1M\,2^{-\kappa_\delta m^2+C_2m}.
$$
Moreover, for every $0<\lambda<\kappa_\delta$,
$$
\int_\Omega 2^{\lambda d(\mathcal G)^2}\,d\mu(\mathcal G)<\infty.
$$
\end{corollary}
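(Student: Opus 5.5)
The plan is to reuse the cover \eqref{eq:crown-cover}, $E_m\subseteq B_m^{\mathrm{ab}}\cup B_m^{\mathrm{nonab}}$, valid for every $m\geq3$. Then we track the explicit dependence on $m$ in Propositions \ref{prop:abelian-crown-estimate} and \ref{prop:nonabelian-crown-estimate} instead of only using $\mu(E_m)\to0$.

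\textbf{Abelian contribution.} By definition,
\[
\beta_m=\kappa_\delta(m-2)^2-(1+\delta)m-\alpha-\frac{(\alpha+\delta)(m-1)}{\Delta}-\frac\delta2 .
\]
Expanding, $\beta_m=\kappa_\delta m^2-C'm-C''$ with $C',C''$ depending only on $\alpha,\delta$. So there is $m_0=m_0(\alpha,\delta)$ such that for $m\geq m_0$ the hypotheses of Proposition \ref{prop:abelian-crown-estimate} hold. In particular $\beta_m\geq2$, which is the condition used in its proof; the threshold ``sufficiently large'' there should be checked to depend only on $\alpha,\delta$, which it does, since it only requires $1\leq s\leq t$ uniformly in $b$ and $\beta_m\geq2$. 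For these $m$ we get
\[
\mu(B_m^{\mathrm{ab}})\leq \frac{4M}{c_0}\,2^{-\kappa_\delta m^2+C'm+C''}.
\]

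\textbf{Nonabelian contribution.} For $m\geq\alpha+\delta+4$, Proposition \ref{prop:nonabelian-crown-estimate} gives
\[
\mu(B_m^{\mathrm{nonab}})\leq \frac{60M}{59}\,60^{-60^{m-2}}.
\]
This is doubly exponential, so it is at most $C M\,2^{-\kappa_\delta m^2}$ for all such $m$, with $C$ depending on $\alpha,\delta$ only. Concretely, $60^{m-2}\log_2 60\geq \kappa_\delta m^2$ for all $m\geq3$, because $\kappa_\delta\leq1$.

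\textbf{Small $m$.} Finitely many small $m$ remain, namely $m<\max(m_0,\alpha+\delta+4,3)$, which depends only on $\alpha,\delta$. For these we use the trivial bound $\mu(E_m)\leq1\leq M$. We then choose $C_1,C_2$ so that $C_1 2^{-\kappa_\delta m^2+C_2m}\geq1$ in this range; for instance, $C_1$ can be large enough to cover the finitely many values. Enlarging $C_1$ and $C_2$ further also absorbs $4/c_0$, $60/59$ and the $2^{C''}$ factor, which gives the tail bound for all $m\geq0$.

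\textbf{Integrability.} Since the tail bound gives $d<\infty$ almost surely, we write
\[
\int 2^{\lambda d^2}\,d\mu=\sum_{r\geq0}2^{\lambda r^2}\mu(d=r)\leq \sum_r 2^{\lambda r^2}\mu(E_{r-1}),
\]
with $\mu(E_{-1})\leq1$ for the term $r=0$. Each term is at most
\[
C_1M\,2^{\lambda r^2-\kappa_\delta(r-1)^2+C_2(r-1)}.
\]
The exponent is $-(\kappa_\delta-\lambda)r^2+O(r)$, so the series converges because $\lambda<\kappa_\delta$.

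\textbf{Main obstacle.} The only real subtlety is verifying that the threshold in Proposition \ref{prop:abelian-crown-estimate} is uniform, i.e. depends only on $\alpha,\delta$ and not on $M$ or the space. This is read off from its proof. Everything else is bookkeeping.
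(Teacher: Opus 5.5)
Your proposal is correct and follows the paper's proof. Like the paper, you use the cover $E_m\subseteq B_m^{\mathrm{ab}}\cup B_m^{\mathrm{nonab}}$ with $\beta_m=\kappa_\delta m^2+O_{\alpha,\delta}(m)$ and the doubly exponential nonabelian bound for large $m$, absorb the finitely many small $m$ into $C_1$ via $\mu(E_m)\leq 1\leq M$, and get integrability from the series $\sum_r 2^{\lambda r^2}\mu(E_{r-1})$, which converges since $\lambda<\kappa_\delta$.
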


\begin{proof}
In Proposition \ref{prop:abelian-crown-estimate}, one has $\beta_m=\kappa_\delta m^2+O_{\alpha,\delta}(m)$. Combining this with \eqref{eq:crown-cover} and Proposition \ref{prop:nonabelian-crown-estimate} gives the first assertion for all sufficiently large $m$. Increasing $C_1$ covers the remaining integers, since $M\geq1$ and $\mu(E_m)\leq1$.

Theorem \ref{thm:finite-generation} and $\mu\{d(\mathcal G)=r\}\leq\mu(E_{r-1})$ for $r\geq1$ give
$$
\int_\Omega 2^{\lambda d(\mathcal G)^2}\,d\mu
\leq1+C_1M\sum_{r\geq1}
2^{\lambda r^2-\kappa_\delta(r-1)^2+C_2(r-1)}.
$$
The series converges because $\lambda<\kappa_\delta$.
\end{proof}

\subsection{The Liu--Wood moments}
Let $S$ be a finite set of finite groups, and let $\barS$ be the smallest class containing $S$ and closed under subgroups, quotients, and finite direct products. For a profinite group $\mathcal{G}$, put
$$
\mathcal{G}^{\barS}=\varprojlim_{\substack{N\trianglelefteq_{\mathrm{open}}\mathcal{G}\\ \mathcal{G}/N\in\barS}}\mathcal{G}/N.
$$
The Liu--Wood space $\calP$ consists of the isomorphism classes of profinite groups $\mathcal{G}$ for which $\mathcal{G}^{\barS}$ is finite for every finite $S$. Its topology has basic clopen sets
$$
U_{S,J}=\{\mathcal{G}\in\calP: \mathcal{G}^{\barS}\cong J\},
$$
where $J$ ranges over the finite groups in $\barS$, up to isomorphism.

\begin{proposition}\label{prop:LW-moments}
For every finite group $H$ and every $u\in\Z$, the function $\nu_H$ is measurable on $\calP$, and
$$
\int_{\calP}\nu_H(\mathcal{G})\,d\mu_u(\mathcal{G})=|H|^{-u}.
$$
\end{proposition}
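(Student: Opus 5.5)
The plan is to split the proof into two parts. First we show that $\nu_H$ is measurable on $\calP$. Then we compute its integral by approximating with the finite-level models $X_{u,n}$ and using the known convergence $\mu_{u,n}\to\mu_u$ on the basic clopen sets $U_{S,J}$.

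\emph{Measurability.} Fix $H$ and take $S=\{H\}$. Every epimorphism $\mathcal G\twoheadrightarrow H$ factors uniquely through $\mathcal G^{\barS}$, because its kernel $N$ has $\mathcal G/N\cong H\in\barS$. Hence $\nu_H(\mathcal G)=|\Sur(\mathcal G^{\barS},H)|$. This quantity depends only on the isomorphism class $J$ of $\mathcal G^{\barS}$, so $\nu_H$ is constant on each $U_{S,J}$. The $U_{S,J}$, with $J$ ranging over the countably many finite groups of $\barS$, partition $\calP$. Thus $\nu_H$ is a countable sum of constants times indicators of clopen sets, so it is Borel and finite-valued. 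The same identity gives
\[
\int\nu_H\,d\mu_u=\sum_J|\Sur(J,H)|\,\mu_u(U_{S,J}).
\]

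\emph{The moment.} At finite level, $X_{u,n}=\wideF_n/\overline{\langle r_1,\dots,r_{n+u}\rangle}$. An epimorphism $X_{u,n}\twoheadrightarrow H$ is the same as an epimorphism $\phi:\wideF_n\twoheadrightarrow H$ killing every $r_i$. For a fixed $\phi$, each $\phi(r_i)$ is uniform in $H$ because $r_i$ is Haar-random, and the $r_i$ are independent. So the probability that $\phi$ kills all of them is $|H|^{-(n+u)}$. Since $|\Sur(\wideF_n,H)|=|\Gen_n(H)|$, we get
\[
\E[\nu_H(X_{u,n})]=|\Gen_n(H)|\,|H|^{-n-u},
\]
which tends to $|H|^{-u}$ as $n\to\infty$, because the probability that $n$ random elements generate $H$ tends to $1$.

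It remains to pass this limit under the integral. By the convergence in law of Liu--Wood, $\mu_{u,n}(U_{S,J})\to\mu_u(U_{S,J})$ for each $J$. Fatou's lemma applied to the series above then gives $\int\nu_H\,d\mu_u\le|H|^{-u}$.

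\emph{Main obstacle: the reverse inequality.} Fatou only gives one direction, so we need uniform integrability. We would get it from the second-moment-type bound $\nu_H^2\le|H|\cdot\nu_{H\times H}$ or similar. More simply, $\nu_H^2$ counts pairs of epimorphisms, and each pair $(\phi,\psi)$ gives an epimorphism onto a subdirect subgroup of $H\times H$. So $\nu_H^2\le\sum_{G\le H\times H}\nu_G$, a finite sum. The computation above shows that $\E[\nu_G(X_{u,n})]\le|G|^{-u}$ is bounded uniformly in $n$ for each $G$. Hence $\nu_H(X_{u,n})$ has uniformly bounded second moments and is uniformly integrable. Uniform integrability lets us interchange the limit with the sum over $J$, since the tail sums over $J$ become uniformly small. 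This yields equality. Alternatively, we may simply invoke \cite{LW}, where this moment computation appears as part of the construction of $\mu_u$.
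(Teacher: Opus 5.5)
Your proposal is correct and follows essentially the same route as the paper: $\nu_H$ is constant on the clopen sets $U_{\{H\},J}$, the finite-level moment is $|\Sur(\wideF_n,H)|\,|H|^{-n-u}$, uniform integrability comes from a second-moment bound via subdirect subgroups of $H\times H$, and the limit is passed using weak convergence of the laws of $X_{u,n}$. Two small remarks: your first tentative bound $\nu_H^2\le|H|\,\nu_{H\times H}$ is false in general (take $\mathcal{G}=H=A_5$), so it is the subdirect-subgroup bound you actually need; and the Fatou step is harmless but unnecessary once uniform integrability is established.
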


\begin{proof}
Take $S=\{H\}$. Every homomorphism $\mathcal{G}\to H$ factors uniquely through $\mathcal{G}^{\barS}$, so
$$
\Sur(\mathcal{G},H)=\Sur(\mathcal{G}^{\barS},H).
$$
It follows that $\mathcal{G}\mapsto|\Sur(\mathcal{G},H)|$ is constant on each basic clopen set $U_{S,J}$, and hence is continuous and measurable.

The finite model satisfies
\begin{equation}\label{eq:LW-finite-moment}
\E\,|\Sur(X_{u,n},H)|=\frac{|\Sur(\wideF_n,H)|}{|H|^{n+u}},
\end{equation}
since an epimorphism $\wideF_n\twoheadrightarrow H$ factors through $X_{u,n}$ precisely when the $n+u$ independent Haar-random relators land in its kernel, an event of probability $|H|^{-(n+u)}$. Since a fixed finite group is generated by a random $n$-tuple with probability tending to $1$,
\begin{equation}\label{eq:finite-moment-limit}
\lim_{n\to\infty}\E|\Sur(X_{u,n},H)|=|H|^{-u}.
\end{equation}
We verify uniform integrability. For a pair of epimorphisms $\varphi,\psi:\wideF_n\twoheadrightarrow H$, the image of $(\varphi,\psi)$ is a subdirect subgroup of $H\times H$. Pairs with image $J$ are in bijection with $\Sur(\wideF_n,J)$. Thus
$$
\E|\Sur(X_{u,n},H)|^2 = \sum_{\substack{J\leq H\times H\\ J\text{ subdirect}}}\frac{|\Sur(\wideF_n,J)|}{|J|^{n+u}} \leq \sum_{\substack{J\leq H\times H\\ J\text{ subdirect}}}|J|^{-u}.
$$
The last sum is finite and independent of $n$. Thus the random variables $|\Sur(X_{u,n},H)|$ are uniformly bounded in $L^2$, hence uniformly integrable.

The laws of $X_{u,n}$ converge weakly to $\mu_u$ \cite[Theorem 1.1]{LW}. Since the epimorphism-counting function is continuous, its values converge in distribution. Uniform integrability and \eqref{eq:finite-moment-limit} now imply the asserted identity.
\end{proof}

\begin{corollary}\label{cor:LW-fg}
For every $u\in\Z$, the Liu--Wood random profinite group is topologically finitely generated with probability $1$.
\end{corollary}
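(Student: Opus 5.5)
The plan is to apply Theorem \ref{thm:finite-generation} directly to the probability space $(\calP,\mathcal{B},\mu_u)$, where $\mathcal{B}$ is the Borel $\sigma$-algebra of $\calP$. This requires checking that the space is decent in the sense of Definition \ref{def:decent-space}. Proposition \ref{prop:LW-moments} already supplies both ingredients. First, each $\nu_H$ is measurable, since it is locally constant on the basic clopen sets $U_{\{H\},J}$. Second, the moments are explicit: $\int\nu_H\,d\mu_u=|H|^{-u}$.

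The next step is to choose the constants. I would split according to the sign of $u$:
\begin{itemize}
\item If $u\geq0$, then $|H|^{-u}\leq 1$, so \eqref{eq:Schur-moment-growth} holds with $M=1$ and $\alpha=\delta=0$.
\item If $u<0$, then $|H|^{-u}=|H|^{|u|}$, so \eqref{eq:Schur-moment-growth} holds with $M=1$, $\alpha=|u|$ and $\delta=0$.
\end{itemize}
In both cases the Schur multiplier factor is never needed, so $\delta=0$ suffices. The bound is uniform over all finite groups $H$, as the definition requires.

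With decency established, Theorem \ref{thm:finite-generation} yields that $\{\mathcal{G}:d(\mathcal{G})<\infty\}$ is measurable and has $\mu_u$-measure $1$. That is exactly the statement of the corollary.

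The only point needing care is that $\mu_u$ is genuinely a probability measure on the $\sigma$-algebra used in Proposition \ref{prop:LW-moments}. Liu--Wood define $\mu_u$ as a Borel measure on $\calP$ with this topology, so this is a formality and there is no real obstacle. Proposition \ref{prop:Em-measurable} also gives Borel measurability of the finite-generation locus intrinsically, since each $E(H)$ is clopen.
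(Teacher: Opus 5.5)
Your proposal is correct and is essentially the paper's own proof. The paper likewise applies Theorem~\ref{thm:finite-generation} with $M=1$, $\alpha=\max\{0,-u\}$ and $\delta=0$, using the moment identity and measurability from Proposition~\ref{prop:LW-moments}.
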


\begin{proof}
Apply Theorem \ref{thm:finite-generation} with $M=1$, $\alpha=\max\{0,-u\}$ and $\delta=0$, using Proposition \ref{prop:LW-moments}.
\end{proof}

\subsection{The Sawin--Wood moments}
Let $M_{g,L}$ be the random closed oriented $3$-manifold obtained from a genus-$g$ Heegaard splitting whose gluing map is a uniform random word of length $L$ in a fixed finite generating set of the mapping class group containing the identity. Sawin and Wood prove that, first letting $L\to\infty$ and then $g\to\infty$, the laws of $\widehat{\pi_1}(M_{g,L})$ converge to a probability measure $\mu_{\mathrm{SW}}$ on their space $\Prof$ of profinite groups having only finitely many open subgroups of each index \cite[Theorem 1.2]{SawinWood3M}. This is the space $\calP$ above, with the same topology \cite[Section 1.3 and Lemma 8.8]{SawinWood3M}. For every finite group $H$, their moment calculation gives
\begin{equation}\label{eq:SW-moment}
\int_{\calP}\nu_H(\mathcal{G})\,d\mu_{\mathrm{SW}}(\mathcal{G})=\frac{|H|\,|H_2(H,\Z)|}{|H_1(H,\Z)|}.
\end{equation}
The limiting moment is computed by Dunfield and Thurston \cite[Theorem 6.21]{DunfieldThurston}; Sawin and Wood's moment-convergence argument \cite[Proposition 4.6 and Section 9.3]{SawinWood3M} shows that it is the moment of the limiting measure, rather than merely the limit of the finite-stage moments.

Since $|H_1(H,\Z)|\geq1$, equation \eqref{eq:SW-moment} implies
$$
\int_{\calP}\nu_H(\mathcal{G})\,d\mu_{\mathrm{SW}}(\mathcal{G}) \leq |H|\cdot|H_2(H,\Z)|.
$$
Thus $(\calP,\mathcal{B},\mu_{\mathrm{SW}})$ is decent with constants $M=1$ and $\alpha=\delta=1$.

\begin{corollary}\label{cor:SW-fg}
A $\mu_{\mathrm{SW}}$-random profinite group is topologically finitely generated with probability $1$.
\end{corollary}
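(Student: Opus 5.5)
The plan is to apply Theorem \ref{thm:finite-generation} directly to $(\calP,\mathcal{B},\mu_{\mathrm{SW}})$, exactly as in the proof of Corollary \ref{cor:LW-fg}. The work is in checking that this space is decent in the sense of Definition \ref{def:decent-space}, which has two parts: measurability of every $\nu_H$, and the moment growth bound \eqref{eq:Schur-moment-growth}.

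For measurability, I will argue as in Proposition \ref{prop:LW-moments}. Taking $S=\{H\}$, every continuous homomorphism $\mathcal{G}\to H$ factors through the finite group $\mathcal{G}^{\barS}$. Hence $\nu_H$ is constant on each basic clopen set $U_{S,J}$, so it is locally constant and therefore Borel. This uses the identification of the Sawin--Wood space $\Prof$ with $\calP$, topology included, which the text quotes from \cite[Section 1.3 and Lemma 8.8]{SawinWood3M}. On this space $\nu_H$ is also always finite, since $\mathcal{G}^{\barS}$ is finite.

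For the moment bound, I will use the Sawin--Wood moment formula \eqref{eq:SW-moment}. Since $|H_1(H,\Z)|\geq1$, it gives
\[
\int_{\calP}\nu_H\,d\mu_{\mathrm{SW}}\leq|H|\,|H_2(H,\Z)|,
\]
so the space is decent with $M=1$ and $\alpha=\delta=1$. Theorem \ref{thm:finite-generation} then yields $\mu_{\mathrm{SW}}\{d(\mathcal{G})<\infty\}=1$. As a bonus, Corollary \ref{cor:gaussian-rank-tail} with $\kappa_1=1/2$ gives a Gaussian tail for $d(\mathcal{G})$.

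The only delicate point is that \eqref{eq:SW-moment} must be the moment of the limiting measure, not just the limit of the finite-stage moments. The text already handles this through Sawin--Wood's moment-convergence argument. If more care were needed, I would fall back on Fatou's lemma: $\nu_H$ is continuous and nonnegative on $\calP$, and the laws converge weakly, so the limiting moment is at most the $\liminf$ of the finite-stage moments. Those are bounded by the Dunfield--Thurston value $|H|\,|H_2(H,\Z)|/|H_1(H,\Z)|$, which is still $\leq|H|\,|H_2(H,\Z)|$. An upper bound is all that decency requires.
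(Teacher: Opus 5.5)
Your proposal is correct and is essentially the paper's argument: the Sawin--Wood moment formula \eqref{eq:SW-moment} together with $|H_1(H,\Z)|\geq1$ makes $(\calP,\mathcal{B},\mu_{\mathrm{SW}})$ decent with $M=1$ and $\alpha=\delta=1$ (measurability of $\nu_H$ coming from local constancy, as in Proposition \ref{prop:LW-moments}), and Theorem \ref{thm:finite-generation} then gives almost sure finite generation. In your optional Fatou fallback, the finite-stage moments converge to the Dunfield--Thurston value rather than being bounded by it, but since Fatou only needs the $\liminf$, the fallback still works.
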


\begin{proof}
Apply Theorem \ref{thm:finite-generation} to the decent space $(\calP,\mathcal{B},\mu_{\mathrm{SW}})$.
\end{proof}

This answers the question posed by Sawin and Wood \cite[Section 10.3]{SawinWood3M}.

\section{Finite presentation}\label{sec:finite-presentation}
We first show that a uniform bound on the number of normal generators in finite quotients gives the same bound in the profinite group.

In the presentation displays in the introduction, $\overline{\langle x_1,\ldots,x_r\rangle}$ denotes the closed normal closure of the relators in the ambient free profinite group; the same convention applies to the defining display for $X_{u,n}$.

\begin{lemma}\label{lem:compactness-relations}
Let $F$ be a profinite group, let $R\trianglelefteq F$ be closed, and let $r\geq0$ be an integer. Suppose that $RU/U$ is normally generated by at most $r$ elements in $F/U$ for every open normal subgroup $U\trianglelefteq F$. Then $R$ is the closed normal closure in $F$ of at most $r$ elements.
\end{lemma}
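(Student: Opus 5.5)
The plan is a standard compactness argument: nonempty finite sets indexed by open normal subgroups, an inverse limit, and then a check that the limit tuple normally generates $R$. This mirrors the proof of Proposition \ref{prop:finite-quotient-detects-generation}.

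For each open normal subgroup $U\trianglelefteq F$, let $X_U$ be the set of $r$-tuples $(y_1,\ldots,y_r)\in (RU/U)^r$ whose normal closure in $F/U$ is $RU/U$. If fewer than $r$ elements suffice, we pad with identity elements, so the hypothesis gives $X_U\neq\varnothing$. Each $X_U$ is finite, since $F/U$ is finite.

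For $V\leq U$ both open normal, the natural map $F/V\to F/U$ sends $RV/V$ onto $RU/U$. It also sends normal closures onto normal closures, because the map is surjective. Hence it carries $X_V$ into $X_U$, and $\{X_U\}$ is an inverse system of nonempty finite sets over the directed set of open normal subgroups, ordered by reverse inclusion. Its inverse limit is therefore nonempty. Pick a compatible family $(y^{(U)})_U$.

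Since $R$ is closed, $R=\varprojlim RU/U$, as a closed subgroup of a profinite group is the intersection of the $RU$. So the compatible family defines elements $x_1,\ldots,x_r\in R$ with $x_iU=y^{(U)}_i$ for every $U$.

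Let $T$ be the closed normal closure of $x_1,\ldots,x_r$ in $F$. Then $T\leq R$. For every $U$, the image $TU/U$ is a normal subgroup of $F/U$ containing the $y^{(U)}_i$, so it contains their normal closure, which is $RU/U$. Hence $RU\leq TU$ for all $U$. As $T$ is closed, $T=\bigcap_U TU\supseteq\bigcap_U RU=R$, so $R=T$.

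The only delicate point is the last equality, which uses closedness of both $R$ and $T$: a closed subgroup of a profinite group equals the intersection of its products with open normal subgroups. With that standard fact, together with surjectivity preserving normal closures, no real obstacle remains.
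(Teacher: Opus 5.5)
Your proof is correct and is essentially the paper's compactness argument. The paper phrases it through the finite intersection property for the nonempty clopen sets $C_U\subseteq R^r$ of tuples whose images normally generate $RU/U$; this avoids your step of turning a compatible family into elements of $R$, and the final step ($R_0U=RU$ for all $U$, hence $R_0=R$ by closedness) is identical to yours.
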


\begin{proof}
For every open normal subgroup $U\trianglelefteq F$, let
$$
C_U=\{(x_1,\ldots,x_r)\in R^r: x_1U,\ldots,x_rU \text{ normally generate } RU/U \text{ in } F/U\}.
$$
For $r=0$, we interpret $R^0$ as a one-point space. Each $C_U$ is nonempty: lift normal generators of $RU/U$ to $R$ and add identities if necessary. It is clopen, since membership depends only on the image in $(F/U)^r$.

The family $\{C_U\}$ has the finite-intersection property. Indeed, if $U_0=U_1\cap\cdots\cap U_s$, then
$$
C_{U_0} \subseteq C_{U_1}\cap\cdots\cap C_{U_s}.
$$
Compactness of $R^r$ therefore gives a tuple
$$
(x_1,\ldots,x_r)\in\bigcap_UC_U.
$$
Let
$$
R_0=\overline{\langle x_1,\ldots,x_r\rangle^F}.
$$
Then $R_0U=RU$ for every open normal $U$. Since $R_0$ and $R$ are closed,
$$
R_0=\bigcap_U R_0U=\bigcap_U RU=R.
$$
\end{proof}

\subsection{The Sawin--Wood model}
For integers $d,r\geq0$, let
$$
\operatorname{FP}_{d,r}=\Bigl\{\Gp\in\calP: \Gp\cong\wideF_d/\overline{\langle x_1,\ldots,x_r\rangle^{\wideF_d}} \text{ for some } x_1,\ldots,x_r\in\wideF_d\Bigr\}.
$$
Presentations with fewer than $r$ relators are included by using identity relators.

\begin{proposition}\label{prop:SW-measurability}
For all integers $d,r\geq0$, the set $\operatorname{FP}_{d,r}$ is closed in $\calP$. The set of finitely presented groups is Borel. The set of finitely generated groups admitting a presentation with $d(\Gp)$ generators and $d(\Gp)$ relators is also Borel. It remains Borel if we require that every profinite presentation have at least $d(\Gp)$ relators.
\end{proposition}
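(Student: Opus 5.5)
The plan is to reduce every assertion to conditions on the finite quotients $\Gp^{\barS}$, because these are exactly the data that the basic clopen sets $U_{S,J}$ record.

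\textbf{Closedness of $\operatorname{FP}_{d,r}$.} By Lemma \ref{lem:compactness-relations}, applied to $F=\wideF_d$ and to the kernel $R$ of a surjection $\wideF_d\twoheadrightarrow\Gp$, the statement $\Gp\in\operatorname{FP}_{d,r}$ is a finite-level condition. It holds exactly when, for every finite set $S$, the group $\Gp^{\barS}$ is a quotient of $\wideF_d$ whose kernel is normally generated by at most $r$ elements modulo $\barS$-quotients. The difficulty is that we must choose the surjection $\wideF_d\twoheadrightarrow\Gp$ compatibly across all $S$.

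\textbf{Compactness step.} I would run a compactness argument over a cofinal sequence $S_1\subseteq S_2\subseteq\cdots$ of finite sets of finite groups. For a fixed level $S$, let
\[
T_S=\bigl\{(\varphi,x)\;:\;\varphi\colon\wideF_d\twoheadrightarrow\Gp^{\barS},\ x\in\wideF_d^{\,r},\ \ker\varphi=\overline{\langle x\rangle^{\wideF_d}}\cdot\ker(\wideF_d\to\wideF_d^{\barS})\bigr\}.
\]
Both $\varphi$ and the class of $x$ are determined modulo $\ker(\wideF_d\to\wideF_d^{\barS})$, which is open because $\wideF_d^{\barS}$ is finite. Hence each $T_S$ is finite once we pass to these classes. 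Here I expect the main subtlety: the defining condition for $T_S$ must be stable under passing from a larger level to a smaller one. This follows because $(\wideF_d/R)^{\barS}=\wideF_d/(R\cdot\ker(\wideF_d\to\wideF_d^{\barS}))$, since $\barS$ is closed under quotients and subdirect products.

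\textbf{Level conditions and closedness.} A nonempty inverse limit of the finite sets $T_S$ produces $\varphi$ and $x$ with
\[
\wideF_d/\overline{\langle x\rangle}\cong\varprojlim_S\Gp^{\barS}\cong\Gp .
\]
The second isomorphism uses that $\Gp$ has only finitely many open subgroups of each index. Conversely, membership of $\Gp$ in $\operatorname{FP}_{d,r}$ makes every $T_S$ nonempty. So $\operatorname{FP}_{d,r}=\bigcap_S C_S$, where $C_S$ is the set of $\Gp$ for which $\Gp^{\barS}$ is isomorphic to one of the finitely many groups $\wideF_d^{\barS}/\langle\text{$r$ normal generators}\rangle$. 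Each $C_S$ is a finite union of basic clopen sets $U_{S,J}$, hence clopen, and therefore $\operatorname{FP}_{d,r}$ is closed.

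\textbf{Borel conclusions.} These follow formally from closedness.
\begin{itemize}
\item The finitely presented groups form $\bigcup_{d,r}\operatorname{FP}_{d,r}$, which is Borel.
\item The locus $\{d(\Gp)=d\}$ equals $\operatorname{FP}_{d,\infty}\setminus\operatorname{FP}_{d-1,\infty}$. Here $\{d(\Gp)\le d\}=\bigcap_S\{\Gp^{\barS}\text{ is $d$-generated}\}$ is closed by Proposition \ref{prop:finite-quotient-detects-generation}, since $d$-generation of $\Gp^{\barS}$ is visible on $U_{S,J}$.
\item The set of $\Gp$ with a presentation having $d(\Gp)$ generators and $d(\Gp)$ relators is $\bigcup_d\bigl(\operatorname{FP}_{d,d}\cap\{d(\Gp)=d\}\bigr)$, which is Borel.
\item The condition that every presentation has at least $d(\Gp)$ relators is the countable intersection over $n,r$ with $r<d$ of the complements of $\operatorname{FP}_{n,r}$, taken inside the piece $\{d(\Gp)=d\}$. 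This set is again Borel.
\end{itemize}
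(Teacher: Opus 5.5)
Your proposal is correct, but your proof that $\operatorname{FP}_{d,r}$ is closed takes a different route from the paper.

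You write $\operatorname{FP}_{d,r}=\bigcap_S C_S$, where each $C_S$ is a finite union of basic clopen sets. You prove the nontrivial inclusion $\bigcap_S C_S\subseteq\operatorname{FP}_{d,r}$ by an inverse-limit (K\"onig-type) argument on the finite sets $T_S$. This glues compatible level-$S$ presentations into a global one, using $\Gp\cong\varprojlim_S\Gp^{\barS}$ and $\bigcap_S\overline{\langle x\rangle}K_S=\overline{\langle x\rangle}$, where $K_S=\ker(\wideF_d\to\wideF_d^{\barS})$. Your compatibility check via $(\wideF_d/R)^{\barS}=\wideF_d/(RK_S)$ is exactly what is needed.

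The paper uses the same identity, in the form $q_{d,r}(x)^{\barS}\cong P_S/\langle\bar x\rangle^{P_S}$, but only to show that the presentation map $q_{d,r}\colon\wideF_d^r\to\calP$ is continuous. Then $\operatorname{FP}_{d,r}$ is the continuous image of a compact space in a Hausdorff space, hence closed.

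What each route buys:
\begin{itemize}
\item The paper's argument is shorter, but it imports the Hausdorff property of $\calP$ from Sawin--Wood.
\item Your argument is self-contained: your inverse-limit reconstruction of $\Gp$ from its level quotients does the work that Hausdorffness does in the paper. It also gives an explicit description of $\operatorname{FP}_{d,r}$ as a countable intersection of explicit clopen sets.
\end{itemize}

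For $\{d(\Gp)=d\}$, you show directly that $\{d(\Gp)\le d\}$ is closed, whereas the paper cites the measurability of the events $E_m$ from Section~\ref{sec:witnesses}. The remaining Borel assertions are handled the same way in both.

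One small point: $\operatorname{FP}_{d,\infty}$ is not notation the paper defines. Write $\{d(\Gp)\le d\}$ directly, taking it to be empty when $d=-1$.
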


\begin{proof}
Consider the quotient map
$$
q_{d,r}:\wideF_d^r\longrightarrow\calP, \qquad (x_1,\ldots,x_r)\longmapsto\wideF_d/\overline{\langle x_1,\ldots,x_r\rangle^{\wideF_d}}.
$$
Its image is $\operatorname{FP}_{d,r}$. The map is continuous. Indeed, if $S$ is a finite set of finite groups, put $P_S=(\wideF_d)^{\barS}$ and let $\bar x_i$ denote the image of $x_i$ in $P_S$. The universal property of level completion gives
$$
q_{d,r}(x_1,\ldots,x_r)^{\barS} \cong P_S/\langle \bar x_1,\ldots,\bar x_r\rangle^{P_S}.
$$
Its isomorphism class therefore depends only on the images of the $x_i$ in the finite group $P_S$. Thus the inverse image under $q_{d,r}$ of every basic clopen subset of $\calP$ is clopen.

The space $\calP$ is Hausdorff \cite[Section 1]{SawinWood3M}. Thus $\operatorname{FP}_{d,r}$ is closed, being the image of a compact space under a continuous map. Taking a countable union over $d,r$ proves the first Borel assertion. The sets $\{\Gp:d(\Gp)=d\}$ are Borel by Section \ref{sec:witnesses}. The set of finitely generated groups admitting the stated presentation is
$$
\bigcup_{d\geq0}\bigl(\{\Gp: d(\Gp)=d\}\cap\operatorname{FP}_{d,d}\bigr),
$$
and is Borel as well.
The additional requirement excludes precisely the Borel set
$$
\bigcup_{n,r\geq0}\bigl(\{\Gp:d(\Gp)>r\}\cap\operatorname{FP}_{n,r}\bigr),
$$
consisting of groups admitting a presentation with fewer than $d(\Gp)$ relators. This proves the final assertion. The condition that every finite profinite presentation have at least as many relators as generators is Borel as well: it excludes precisely
$$
\bigcup_{n>r\geq0}\operatorname{FP}_{n,r}.
$$
\end{proof}

\begin{lemma}\label{lem:gaschutz-normal-generation}
Let $P$ be a finite group, let $A\trianglelefteq P$, and let $r\geq1$. For every prime $p$, put
$$
A_p=A/[A,A]A^p,
$$
viewed as an $\F_p[P/A]$-module. If every $A_p$ is generated by at most $r$ elements as a module, then $A$ is normally generated in $P$ by at most $r$ elements.
\end{lemma}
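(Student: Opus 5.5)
We need to prove this statement: if every $A_p$ is $r$-generated as an $\F_p[P/A]$-module, then $A$ is normally generated by $r$ elements. This is a Gaschütz-type result on normal generation.

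We argue by induction on $|A|$, following the classical Gaschütz lifting argument, and in fact prove the stronger lifting statement: for every $N\trianglelefteq P$ with $N\leq A$, any $r$-tuple whose images normally generate $A/N$ in $P/N$ lifts, modulo $N$, to an $r$-tuple normally generating $A$. Applying this with $N=A$ and the trivial tuple gives the lemma. By induction along a chief series of $P$ through $A$, it suffices to treat the case where $N$ is a minimal normal subgroup of $P$ contained in $A$. So fix $a_1,\dots,a_r\in A$ whose normal closure $B$ satisfies $BN=A$. We seek $n_1,\dots,n_r\in N$ such that $a_1n_1,\dots,a_rn_r$ normally generate $A$.

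\emph{Case $N\not\leq \Frat$-type situations, nonabelian $N$.} If $N$ is nonabelian, then $N=S^\rho$ is perfect. Since $B\trianglelefteq P$, either $N\leq B$, and we are done with $n_i=1$, or $B\cap N=1$. In the latter case $B$ centralizes $N$ and $A=B\times N$. Then $A/[A,A]A^p=B/[B,B]B^p$ for all $p$, and a counting argument is needed. I would instead handle this case by perturbing: replace $a_1$ by $a_1n$ for $n\in N$ not in $C$-trivial position. The normal closure of $a_1 n$ together with $a_2,\dots,a_r$ contains commutators $[a_1n,x]$ for $x\in N$, which equal $[n,x]$ projected into $N$ since $B$ centralizes $N$. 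Choosing $n\neq1$, these generate a nontrivial normal subgroup of $P$ inside $N$, which is therefore $N$ by minimality. Hence $N$ lies in the new normal closure, which thus contains $B N=A$. The nonabelian case is therefore easy and needs no hypothesis.

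\emph{Abelian $N$.} Then $N$ is an irreducible $\F_p[P]$-module on which $A$ acts (possibly nontrivially). If $N\leq B$ we are done; otherwise $B\cap N=1$ by minimality, so $A=B\times N$ as groups with $[B,N]=1$. In particular $N$ is central in $A$, so $P/A$ acts on $N$. Now $A_p=B_p\oplus N$ as $\F_p[P/A]$-modules, because $N\cap [A,A]A^p=1$ follows from the direct product decomposition. The image of $B$ in $A_p$ is the submodule $B_p$ generated by the images $\bar a_i$. Take $n_i\in N$ so that $\bar a_i+n_i$ generate $A_p$; this is possible because $A_p$ is $r$-generated. Here a standard lifting fact for modules is used: if $M=M'\oplus N$ with $N$ simple, and $M$ is $r$-generated while $M'$ is generated by given $\bar a_i$, then the generators can be adjusted by elements of $N$. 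This module-theoretic Gaschütz lemma I would prove by counting, or directly: $\bar a_i+n_i$ fail to generate $M$ only if they generate a complement to $N$, and such complements correspond to homomorphisms $M'\to N$. If all choices failed, this would give $M\cong M'\oplus N$ with too many generators needed. This counting step is the main obstacle, and I would follow Gaschütz's original argument.

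Given such $n_i$, let $B'$ be the normal closure of $a_in_i$. Then $B'N=A$ and $B'$ maps onto $A_p$. If $B'\cap N=1$, then $A=B'\times N$ and $N$ would map injectively to $A_p/\mathrm{im}(B')=0$, a contradiction. Hence $N\leq B'$, so $B'=A$.
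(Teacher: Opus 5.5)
Your proof is correct, but it takes a different route from the paper's.

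\textbf{The paper's route.} The paper does not induct. It passes to $A/\Phi$, where $\Phi$ is the intersection of the maximal proper $P$-normal subgroups of $A$; normal generation of $A$ can be tested modulo $\Phi$. It then writes $A/\Phi\cong B\times C$ as a product of abelian and nonabelian chief factors. Each Sylow subgroup $B_p$ is a semisimple module quotient of $A_p$, so a generating $r$-tuple of $A_p$ projects directly to one of $B_p$. The factor $C$ is handled by giving the first element a nontrivial component in every simple direct factor.

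\textbf{Your route.} You prove the stronger Gasch\"utz-type lifting statement, that every normally generating $r$-tuple of $A/N$ lifts, one minimal normal subgroup at a time. Your nonabelian step is the same commutator trick the paper uses on $C$. Strictly, the subgroup that is normal in $P$ is $B'\cap N$, not the subgroup generated by the commutators $[n,x]$, but $B'\cap N\neq 1$ is all you need. Your abelian step replaces the paper's semisimplicity observation with a count of complements.

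\textbf{Trade-off.} The paper's argument is shorter and needs no counting. Yours costs the induction and the count, but yields the lifting property, which the paper's argument does not give.

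\textbf{The count you left open.} The step you flag as the ``main obstacle'' follows at once from the correspondence you already identified. Put $M=A_p=M'\oplus N$ with $M'=B_p$. A tuple $(n_i)\in N^r$ fails to give generators exactly when the submodule generated by the $\bar a_i+n_i$ is a complement to $N$. That happens exactly when $n_i=\phi(\bar a_i)$ for some $\phi\in\Hom_{P/A}(M',N)$, and $\phi$ is determined by these values because the $\bar a_i$ generate $M'$. So there are exactly $|\Hom_{P/A}(M',N)|$ failing tuples. On the other hand,
$|\Hom_{P/A}(M',N)|\cdot|\End_{P/A}(N)|=|\Hom_{P/A}(M,N)|\leq|N|^r$,
because a homomorphism out of $M$ is determined by its values on $r$ generators. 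Since $|\End_{P/A}(N)|\geq2$, fewer than $|N|^r$ tuples fail, so a good choice of the $n_i$ exists. With this count written in, no appeal to Gasch\"utz's general argument is needed.
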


\begin{proof}
The assertion is immediate if $A=1$. Let $\Phi$ be the intersection of the maximal proper subgroups of $A$ that are normal in $P$. A tuple normally generates $A$ in $P$ if and only if its image normally generates $A/\Phi$.

Choose $M_1,\ldots,M_k$ with intersection $\Phi$ such that none can be omitted. For each $i$, the image of $\bigcap_{j<i}M_j$ in $A/M_i$ is nontrivial and normal under $P$. It is therefore all of $A/M_i$. Here the empty intersection means $A$. Induction gives a $P$-equivariant isomorphism
$$
A/\Phi\cong\prod_{i=1}^k A/M_i.
$$
Each factor is either elementary abelian or a direct product of nonabelian simple groups. Write this product as $B\times C$, where $B$ is the product of the abelian factors and $C$ is the product of the nonabelian factors. For every prime $p$, the Sylow $p$-subgroup $B_p$ of $B$ is an $\F_p[P/A]$-module quotient of $A_p$, so it is generated by at most $r$ elements. Combining these tuples over all $p$ gives $r$ elements that normally generate $B$ under $P$.

Extend this tuple to $B\times C$ so that the first element has a nonidentity component in every simple direct factor of $C$. Commutators with each factor show that the normal closure of this element meets that factor nontrivially. It therefore contains the factor. Thus the tuple normally generates $B\times C$ under $P$. Lifting it to $A$ proves the assertion.
\end{proof}

The next proposition follows from Lubotzky's presentation formula \cite[Theorems 3.1 and 5.1]{Lubotzky}. We include a proof. Throughout this section, cohomology and homomorphisms of profinite modules are continuous.

\begin{proposition}\label{prop:presentation-criterion}
Let $\Gp$ be a topologically finitely generated profinite group, put $d=d(\Gp)$, and suppose that for every prime $p$ and every finite simple $\F_p[[\Gp]]$-module $V$, with finite field $D=\End_{\Gp}(V)$, one has
\begin{equation}\label{eq:H2-H1-hypothesis}
\dim_DH^2(\Gp,V) \leq \dim_DH^1(\Gp,V).
\end{equation}
Then $\Gp$ has a profinite presentation with $d$ generators and at most $d$ relations.
\end{proposition}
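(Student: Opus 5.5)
Fix a minimal presentation $1\to R\to F\to\Gp\to1$ with $F=\wideF_d$; such an epimorphism exists because $d(\Gp)=d$. The plan is to show that $R$ is the closed normal closure of at most $d$ elements. By Lemma~\ref{lem:compactness-relations}, it suffices to prove that for every open normal $U\trianglelefteq F$ the image $RU/U$ is normally generated by at most $d$ elements in $P=F/U$. Put $A=RU/U\trianglelefteq P$. By Lemma~\ref{lem:gaschutz-normal-generation}, this reduces further to showing that, for every prime $p$, the $\F_p[P/A]$-module $A_p=A/[A,A]A^p$ needs at most $d$ generators. Note that $P/A=F/RU$ is a finite quotient of $\Gp$.

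Next we pass to the relation module. The module $A_p$ is a quotient of $\overline{R}=R/[R,R]R^p$, the mod-$p$ relation module, which is a profinite $\F_p[[\Gp]]$-module. The number of generators of a finite module over the semisimple-by-radical ring is detected on its head. So it suffices to bound, for every simple $\F_p[[\Gp]]$-module $V$ with $D=\End_{\Gp}(V)$, the multiplicity of $V$ in the head of $\overline R$ (equivalently, of $A_p$, whose head is a quotient of that of $\overline R$). A finite module over a finite group algebra is $d$-generated as soon as, for each simple $V$, the multiplicity of $V$ in its head is at most $d\cdot\dim_D V$, since that is the multiplicity of $V$ in the head of the free module of rank $d$. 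This multiplicity equals $\dim_D\Hom_{\Gp}(\overline R,V)$.

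To compute this, we use the five-term/Hochschild--Serre exact sequence for $1\to R\to F\to\Gp\to1$ with coefficients in $V$. Because $F$ is free, $H^2(F,V)=0$, so
$$0\to H^1(\Gp,V)\to H^1(F,V)\to \Hom_{\Gp}(\overline R,V)\to H^2(\Gp,V)\to0 .$$
Here $H^1(R,V)^{\Gp}=\Hom_{\Gp}(\overline R,V)$, and the sequence continues exactly because $\mathrm{cd}_p F\le1$. Counting $D$-dimensions gives
$$\dim_D\Hom_{\Gp}(\overline R,V)=\dim_DH^1(F,V)-\dim_DH^1(\Gp,V)+\dim_DH^2(\Gp,V).$$
Hypothesis \eqref{eq:H2-H1-hypothesis} then bounds this by $\dim_DH^1(F,V)$. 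For $F=\wideF_d$ we have $\dim_DH^1(F,V)=d\dim_DV-\dim_DV^F+\dim_D H^0$-type correction; concretely $\dim_D Z^1(F,V)=d\dim_DV$ and $\dim_D B^1=\dim_DV-\dim_D V^{F}$. Hence $\dim_DH^1(F,V)\le d\dim_DV$, with the trivial module giving exactly $d$. This yields multiplicity at most $d\dim_DV$, as needed.

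Finally, we combine the steps. For finite modules, a bound on the multiplicity of every simple in the head implies $d$-generation. Applying this to $A_p$, a finite quotient of $\overline R$, gives the required bound on generators of $A_p$. Lemmas~\ref{lem:gaschutz-normal-generation} and~\ref{lem:compactness-relations} then conclude.

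The main obstacle I expect is the middle step. It requires identifying the generator count of the possibly infinite profinite module $\overline R$, or of its finite quotients $A_p$, with head multiplicities. It also requires checking that the five-term sequence holds for profinite continuous cohomology with $H^2(F,V)=0$, and that the relevant simple modules $V$ of the finite quotient $P/A$ are exactly the simple $\F_p[[\Gp]]$-modules, so that the hypothesis applies to them. The edge case $r=0$ versus $r\ge1$ in Lemma~\ref{lem:gaschutz-normal-generation} is harmless, since we may take $r=d\ge1$, or $d=0$ trivially.
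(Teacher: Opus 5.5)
Your proposal is correct and follows the paper's proof essentially step by step. The argument uses the five-term sequence with $H^2(F,V)=0$ and $\dim_D H^1(F,V)=(d-\xi_V)\dim_D V$ to get $\dim_D\Hom_{\Gp}(R_p,V)\leq d\dim_D V$, then applies the head-multiplicity (Nakayama) criterion to the finite quotients $A_{U,p}$ of $R_p$ with simple modules inflated to $\Gp$, and concludes with Lemmas~\ref{lem:gaschutz-normal-generation} and~\ref{lem:compactness-relations}; your aside about an ``$H^0$-type correction'' is muddled, and the clean count is $\dim_D H^1(F,V)=d\dim_D V-\dim_D V+\dim_D V^F$, which is what you actually use.
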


\begin{proof}
The assertion is immediate if $d=0$, so assume $d\geq1$. Choose an epimorphism
$$
\pi:F:=\wideF_d\twoheadrightarrow\Gp, \qquad R=\ker\pi.
$$
Fix a prime $p$ and a finite simple $\F_p[[\Gp]]$-module $V$. Put $D=\End_{\Gp}(V)$ and
$$
R_p=R/\overline{[R,R]R^p}.
$$
The five-term sequence for $1\to R\to F\to\Gp\to1$, together with $H^2(F,V)=0$, gives
\begin{equation}\label{eq:five-term}
0\longrightarrow H^1(\Gp,V)\longrightarrow H^1(F,V)\longrightarrow\Hom_{\Gp}(R_p,V)\longrightarrow H^2(\Gp,V)\longrightarrow0.
\end{equation}
If $\xi_V=0$ for the trivial module and $\xi_V=1$ otherwise, then
$$
\dim_DH^1(F,V)=(d-\xi_V)\dim_DV.
$$
It follows from \eqref{eq:H2-H1-hypothesis} and \eqref{eq:five-term} that
\begin{equation}\label{eq:hom-Rp-bound}
\dim_D\Hom_{\Gp}(R_p,V) \leq (d-\xi_V)\dim_D(V) \leq d\cdot\dim_D(V).
\end{equation}
Let $U\trianglelefteq F$ be open and normal, and put
$$
P_U=F/U,\qquad A_U=RU/U,\qquad Q_U=P_U/A_U=F/RU.
$$
For every prime $p$, the $\F_p[Q_U]$-module
$$
A_{U,p}=A_U/[A_U,A_U]A_U^p
$$
is a quotient of $R_p$. If $V$ is a simple $\F_p[Q_U]$-module, inflate it to a $\Gp$-module. Composition with $R_p\twoheadrightarrow A_{U,p}$ gives
$$
\dim_D\Hom_{Q_U}(A_{U,p},V) \leq \dim_D\Hom_{\Gp}(R_p,V) \leq d\cdot\dim_D(V).
$$
For a finite group $Q$, an $\F_p[Q]$-module $N$ of finite dimension is generated by at most $d$ elements if and only if
$$
\dim_{\End_Q(V)}\Hom_Q(N,V) \leq d\dim_{\End_Q(V)}V
$$
for every simple module $V$. Indeed, let $J$ be the Jacobson radical of $\F_p[Q]$. The multiplicity of $V$ in $N/JN$ is $\dim_{\End_Q(V)}\Hom_Q(N,V)$, while its multiplicity in $\F_p[Q]/J$ is $\dim_{\End_Q(V)}V$. The assertion follows from Nakayama's lemma. Thus every $A_{U,p}$ is generated by at most $d$ elements. Lemma \ref{lem:gaschutz-normal-generation} shows that $A_U$ is normally generated by at most $d$ elements in $P_U$.

This holds for every open normal $U\trianglelefteq F$. Lemma \ref{lem:compactness-relations} gives $d$ elements whose closed normal closure in $F$ is $R$, proving the assertion.
\end{proof}

\begin{proof}[Proof of Theorem \ref{thm:SW-finite-presentation}]
Sawin and Wood prove that the support of $\mu_{\mathrm{SW}}$ is the closure in $\calP$ of the profinite completions of fundamental groups of closed oriented $3$-manifolds \cite[Theorems 1.2 and 1.5]{SawinWood3M}. For every $\Gp$ in this support, there is a homomorphism
$$
\tau:H^3(\Gp,\Q/\Z)\longrightarrow\Q/\Z
$$
with the following properties. For every prime $p$ and every finite simple $\F_p[[\Gp]]$-module $V$, put $V^\vee=\Hom_{\F_p}(V,\F_p)$. Then
\begin{align}
&\dim_{\F_p} H^1(\Gp,V)=\dim_{\F_p} H^1(\Gp,V^\vee),\label{eq:PD-dims}\\
&H^2(\Gp,V)\times H^1(\Gp,V^\vee)\longrightarrow\Q/\Z,\qquad(\alpha,\beta)\longmapsto\tau(\alpha\smile\beta),\label{eq:PD-pairing}
\end{align}
where the pairing is nondegenerate in its first variable. Here the cup product uses evaluation $V\otimes_{\F_p}V^\vee\to\F_p$ followed by the homomorphism $\F_p\to\Q/\Z$ sending $1$ to $1/p$.

Suppose now that $d(\Gp)<\infty$. The group $H^1(\Gp,V^\vee)$ is finite, since a continuous $1$-cocycle is determined by its values on a finite generating tuple. The pairing \eqref{eq:PD-pairing} gives an injection
$$
H^2(\Gp,V)\hookrightarrow\Hom\bigl(H^1(\Gp,V^\vee),\Q/\Z\bigr).
$$
Thus $H^2(\Gp,V)$ is finite and, by \eqref{eq:PD-dims},
$$
\dim_{\F_p}H^2(\Gp,V) \leq \dim_{\F_p}H^1(\Gp,V).
$$
Dividing by $[D:\F_p]$, where $D=\End_{\Gp}(V)$, gives \eqref{eq:H2-H1-hypothesis}. Proposition \ref{prop:presentation-criterion} therefore gives a presentation with $d(\Gp)$ generators and at most $d(\Gp)$ relators.

By Corollary \ref{cor:SW-fg}, the hypothesis $d(\Gp)<\infty$ holds for $\mu_{\mathrm{SW}}$-almost every $\Gp$. The support has full measure: $\calP$ has a countable clopen basis, so the complement of the support is a countable union of null basic opens. Thus this presentation exists almost surely.

To prove that the number of relators is minimal, fix a prime $p$. For $k\geq1$, let $C_{p^k}$ be the cyclic group of order $p^k$. Its Schur multiplier is trivial, so \eqref{eq:SW-moment} gives
$$
\E_{\mu_{\mathrm{SW}}}\nu_{\Z/p^k\Z}=1.
$$
The free action of $\Aut(\Z/p^k\Z)$ on epimorphisms, as in Remark \ref{rem:quotient-event}, therefore gives
$$
\mu_{\mathrm{SW}}\{\Gp:\Gp\twoheadrightarrow \Z/p^k\Z\}
\leq\frac{1}{p^{k-1}(p-1)}.
$$
These events are measurable and decreasing in $k$, so their intersection has measure zero. The maximal abelian pro-$p$ quotient $A_p$ of a finitely generated profinite group is a finitely generated $\Z_p$-module. If $A_p$ were infinite, it would have a quotient isomorphic to $\Z_p$, and hence quotients $C_{p^k}$ for every $k$. Thus $A_p$ is finite almost surely.

Let $\Gp$ be a finitely generated group with this property. A presentation with infinitely many generators and finitely many relators has an infinite elementary abelian $p$-quotient. It therefore cannot present $\Gp$. Consider a presentation of $\Gp$ with $n$ generators and $r$ relators $y_1,\ldots,y_r\in\wideF_n$. Passing to the maximal abelian pro-$p$ quotient gives
$$
A_p\cong\Z_p^n/\sum_{i=1}^r\Z_p\bar y_i,
$$
where $\bar y_i$ is the image of $y_i$ in $\Z_p^n$. Since $A_p$ is finite, these vectors span $\Q_p^n$. Hence $r\geq n\geq d(\Gp)$. Every profinite presentation of $\Gp$ therefore has at least $d(\Gp)$ relators. The presentation already constructed attains this bound with $d(\Gp)$ generators. Proposition \ref{prop:SW-measurability} gives the asserted measurability.
\end{proof}

\subsection{The Liu--Wood model}
We now specialize to the probability space $\calP$ of small profinite groups. Retain the notation $\barS$ from the end of Section \ref{sec:estimates}, and write
$$
F_{d,S}:=(\wideF_d)^{\barS}.
$$
For finite $S$, this is the finite free pro-$\barS$ group on $d$ generators.

We give an alternative, cohomological proof of Liu and Wood's \cite[Lemma 13.1]{LW}.

\begin{lemma}\label{lem:finite-level-relations}
Let $S$ be a finite set of finite groups and let $H$ be a finite pro-$\barS$ group. Let $n\geq d\geq d(H)$ and $r\geq1$ be integers. Suppose that some epimorphism $F_{n,S}\twoheadrightarrow H$ has kernel normally generated by at most $n-d+r$ elements. Then the kernel of every epimorphism $F_{d,S}\twoheadrightarrow H$ is normally generated by at most $r$ elements.
\end{lemma}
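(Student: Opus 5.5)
The plan is to reduce the statement to a module-theoretic count at each prime and simple module, and then apply Lemma \ref{lem:gaschutz-normal-generation} directly inside the finite group $F_{d,S}$. Fix an epimorphism $\pi_d:F_{d,S}\twoheadrightarrow H$ with kernel $A$. By Lemma \ref{lem:gaschutz-normal-generation}, applied with $P=F_{d,S}$, it suffices to show the following: for every prime $p$, the $\F_p[H]$-module $A_p=A/[A,A]A^p$ is generated by at most $r$ elements. By the Nakayama criterion already used in the proof of Proposition \ref{prop:presentation-criterion}, this amounts to proving that for every simple $\F_p[H]$-module $V$, with $D=\End_H(V)$,
$$
\dim_D\Hom_H(A_p,V)\leq r\dim_DV .
$$

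To compute the left side, I would use the five-term sequence in the pro-$\barS$ world. The naive difficulty is that $F_{d,S}$ is finite and not free, so $H^2(F_{d,S},V)\neq0$ in general. To get around this, I would not work with $F_{d,S}$ directly. Instead I would observe that a homomorphism $A\to V$ that is $H$-equivariant defines an extension $A/\ker\to F_{d,S}/\ker\to H$, where the middle group is a subdirect piece of $V^k\rtimes$-type extensions of $H$. The key input is the universal property of the free pro-$\barS$ group. If the split extension $V\rtimes H$ (or more precisely the relevant extension of $H$ by a sum of copies of $V$) lies in $\barS$, then $\Hom_H(A_p,V)$ is identified with the corresponding quotient computed from the genuinely free profinite group, $\Hom_H(R_p,V)$ with $R=\ker(\wideF_d\to H)$. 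Concretely, the relevant extensions are quotients of $F_{d,S}$ precisely when they lie in $\barS$. Hence $\Hom_H(A_p,V)$ equals the subspace of $\Hom_H(R_p,V)$ consisting of the classes whose associated extension lies in $\barS$. Call the resulting dimension $h_d(V)$. From \eqref{eq:five-term} applied to $\wideF_d$, we get
$$
\dim_D\Hom_H(R_p,V)=(d-\xi_V)\dim_DV-\dim_DH^1(H,V)+\dim_DH^2(H,V).
$$
Restricting to the $\barS$-admissible classes amounts to replacing $H^2(H,V)$ by the subgroup $H^2_S(H,V)$ of classes whose extensions lie in $\barS$. This subgroup is independent of $d$ because $d\geq d(H)$. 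Therefore
$$
\dim_D\Hom_H(A_p,V)=(d-\xi_V)\dim_DV-\dim_DH^1(H,V)+\dim_DH^2_S(H,V),
$$
and the same formula holds for $n$ in place of $d$.

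Now compare the two levels. The kernel $A'$ at level $n$ is normally generated by $n-d+r$ elements, so $\dim_D\Hom_H(A'_p,V)\leq(n-d+r)\dim_DV$. Subtracting the formulas, the level-$d$ quantity equals the level-$n$ quantity minus $(n-d)\dim_DV$, which gives exactly the bound $r\dim_DV$. Lemma \ref{lem:gaschutz-normal-generation} then finishes the proof; the hypothesis $r\geq1$ is needed there.

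The main obstacle is making the identification of $\Hom_H(A_p,V)$ with the $\barS$-admissible part of $\Hom_H(R_p,V)$ rigorous, and in particular checking that the admissible subspace $H^2_S$ is genuinely the same for $n$ and $d$. First I would verify that the set of $\barS$-extensions is closed under the fibre products and Baer sums needed to make it a $D$-subspace, using closure of $\barS$ under subgroups, quotients and products. If this proves awkward, the fallback is Gaschütz's lemma. It lets one extend a generating $d$-tuple of $H$ to an $n$-tuple $(x_1,\ldots,x_d,1,\ldots,1)$ and write $F_{n,S}\cong$ a quotient in which the extra $n-d$ generators map into $A'$. This directly exhibits $A'$ as normally generated by $A$ together with $n-d$ free elements, and the Nakayama count above then gives the same conclusion.
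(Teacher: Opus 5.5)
Your route is essentially the paper's: the five-term sequence, a space of $\barS$-admissible classes in $H^2(H,V)$ that does not depend on the number of generators, and subtraction of $(n-d)\dim_DV$. The only change is that you run it over $\wideF_d$, whose transgression is surjective, rather than over $F_{d,S}$. There is, however, a genuine gap. Your formula
\[
\dim_D\Hom_H(A_p,V)=(d-\xi_V)\dim_DV-\dim_DH^1(H,V)+\dim_DH^2_S(H,V)
\]
needs the whole kernel of the transgression $\Hom_H(R_p,V)\to H^2(H,V)$ to lie in the admissible subspace. The nonzero elements of that kernel are exactly the $\phi$ with $\wideF_d/\ker\phi\cong V\rtimes H$, so this requires $V\rtimes H\in\barS$. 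You mention this condition in passing, but you never discharge it, and you then apply the formula to every simple $V$.

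When $V\rtimes H\notin\barS$ the formula is false. Take $S=\{C_3\}$, $H=1$, $p=2$ and $V=\F_2$. Then $A=F_{d,S}\cong C_3^d$, so $A_2=0$, while the right-hand side equals $d$. Correspondingly, your subtraction step would assert $0=0-(n-d)$. In this case $\Hom_H(A_p,V)$ injects into $H^2(H,V)$, and its image is the set of nonsplit admissible classes together with $0$, which is the same at levels $d$ and $n$. So all the hypothesis gives you is $m_d(V)=m_n(V)\leq(n-d+r)\dim_DV$, which is not the required bound.

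The missing idea is the one the paper uses right after ``we may assume $m_d(V)>0$''. If a nonsplit extension $1\to V\to E\to H\to1$ lies in $\barS$, then so does $V\rtimes H$. Indeed, $\Delta V$ is normal in the subgroup $E\times_HE\leq E\times E$, and $(E\times_HE)/\Delta V\cong V\rtimes H$, with the image of the diagonal copy of $E$ giving the complement. Hence either $\Hom_H(A_p,V)=0$ and there is nothing to prove, or $V\rtimes H\in\barS$ and your formula holds at both levels, so the subtraction goes through.

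With this case split, two of your worries disappear. $H^2_S$ is automatically a subspace, being the image of $\Hom_H(A_p,V)$ under the linear transgression, so no Baer-sum check is needed. Its independence of $d$ holds by definition; the hypothesis $d\geq d(H)$ only guarantees that the epimorphism exists.

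Your Gasch\"utz fallback does not repair the gap. Normalizing $F_{n,S}\twoheadrightarrow H$ so that the last $n-d$ generators map to $1$ shows that the level-$n$ kernel is normally generated by the level-$d$ kernel plus $n-d$ elements. That is the opposite inequality to the one you need. The ``Nakayama count'' it then defers to is the same count with the same defect.
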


\begin{proof}
Adding one generator and one relator if necessary, we may assume $n>d$. Put $P_j=F_{j,S}$ for $j=d,n$. Choose epimorphisms $\alpha_j:P_j\twoheadrightarrow H$, with $\alpha_n$ as in the hypothesis, and put $A_j=\ker\alpha_j$.

Fix a prime $p$ and a finite simple $\F_p[H]$-module $V$. Write $D=\End_H(V)$, $v=\dim_DV$, and
$$
m_j(V)=\dim_D\Hom_H\bigl(A_j/[A_j,A_j]A_j^p,V\bigr).
$$
The hypothesis gives $m_n(V)\leq(n-d+r)v$. We shall show that $m_d(V)\leq rv$.

We may assume $m_d(V)>0$. A nonzero $H$-equivariant map from $A_d/[A_d,A_d]A_d^p$ to $V$ is surjective. Pulling its kernel back to $A_d$ gives a subgroup normal in $P_d$, and hence an extension
$$
1\longrightarrow V\longrightarrow E\longrightarrow H\longrightarrow1
$$
with $E\in\barS$. The diagonal copy $\Delta V$ is normal in $E\times_H E$, and
$$
(E\times_H E)/\Delta V\cong V\rtimes H.
$$
Indeed, the diagonal copy of $E$ gives a section after quotienting by $\Delta V$. Thus $V\rtimes H\in\barS$.

By the universal property of $P_j$, the lifts of its distinguished generators to $V\rtimes H$ can be chosen arbitrarily. Hence a $1$-cocycle $P_j\to V$ is specified by any $j$ values in $V$. If $\xi_V=0$ for the trivial module and $\xi_V=1$ otherwise, then
$$
\dim_DH^1(P_j,V)=(j-\xi_V)v.
$$
Put
$$
K_j=\ker\bigl(H^2(H,V)\longrightarrow H^2(P_j,V)\bigr).
$$
The five-term sequence gives
$$
m_j(V)=(j-\xi_V)v-\dim_DH^1(H,V)+\dim_DK_j.
$$

We claim that $K_d=K_n$. A nonzero class $c\in H^2(H,V)$ corresponds to a nonsplit extension $E_c$ of $H$ by $V$. Every subgroup of $E_c$ mapping onto $H$ is all of $E_c$. To see this, its intersection with $V$ is an $H$-submodule, hence either $V$ or $0$. The first case gives the whole group, and the second gives a splitting. Now $c\in K_j$ if and only if $\alpha_j$ lifts to $E_c$. Any such lift is surjective, so it implies $E_c\in\barS$. Conversely, if $E_c\in\barS$, the universal property of $P_j$ gives a lift. Thus membership in $K_j$ is independent of $j$, proving the claim.

It follows that
$$
m_d(V)=m_n(V)-(n-d)v\leq rv.
$$
This also holds when $m_d(V)=0$. The module criterion in the proof of Proposition \ref{prop:presentation-criterion} shows that every $A_d/[A_d,A_d]A_d^p$ is generated by at most $r$ elements. Lemma \ref{lem:gaschutz-normal-generation} now gives the assertion.
\end{proof}

Put
$$
\operatorname{FP}:=\bigcup_{d,r\geq0}\operatorname{FP}_{d,r}.
$$

\begin{proposition}\label{prop:LW-measurability}
The Borel $\sigma$-algebra of $\calP$ is generated by the functions $\nu_A$, where $A$ ranges over the finite groups. In particular, $\operatorname{FP}_{d,r}$ and $\operatorname{FP}$ belong to this $\sigma$-algebra.
\end{proposition}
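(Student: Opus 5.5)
We will prove the two assertions in order. The plan has three steps: show each $\nu_A$ is Borel, show these functions separate points and the topology has a countable base, and then get measurability of $\operatorname{FP}_{d,r}$ for free.

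\textbf{Step 1: each $\nu_A$ is Borel.} This was already shown in the proof of Proposition \ref{prop:LW-moments}. Take $S=\{A\}$. Then $\nu_A(\Gp)=|\Sur(\Gp^{\barS},A)|$ is constant on each basic clopen set $U_{S,J}$, hence continuous. So the $\sigma$-algebra $\sigma(\nu_A)$ generated by all the $\nu_A$ is contained in the Borel $\sigma$-algebra.

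\textbf{Step 2: the reverse inclusion.} Here we use that $\calP$ has a countable base. There are countably many finite sets $S$ of finite groups up to isomorphism, and countably many $J$. So it suffices to show that each basic clopen set $U_{S,J}$ lies in $\sigma(\nu_A)$.

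We need to recognize the event $\{\Gp^{\barS}\cong J\}$ from surjection counts. The key point is that $\Gp^{\barS}$ is the largest quotient of $\Gp$ lying in $\barS$. Hence $\Gp^{\barS}\cong J$ holds if and only if both of the following hold:
\begin{enumerate}[label=\textup{(\alph*)}]
\item $\Gp\twoheadrightarrow J$;
\item $\Gp$ has no quotient $J'\in\barS$ with $|J'|>|J|$.
\end{enumerate}
Indeed, if $\Gp^{\barS}\cong J$, every $\barS$-quotient factors through $J$, so (b) holds. Conversely, if (a) and (b) hold, then $\Gp^{\barS}$ is a quotient in $\barS$ that surjects onto $J$. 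By (b) it has order at most $|J|$, so it is isomorphic to $J$.

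Condition (a) is the event $\{\nu_J>0\}$. Condition (b) is a countable intersection of events $\{\nu_{J'}=0\}$ over isomorphism classes of $J'\in\barS$ with $|J'|>|J|$. Hence $U_{S,J}\in\sigma(\nu_A)$.

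Since $\calP$ is second countable, every open set is a countable union of basic sets $U_{S,J}$. Therefore the Borel $\sigma$-algebra equals $\sigma(\nu_A)$.

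\textbf{Step 3: the presentation loci.} Proposition \ref{prop:SW-measurability} shows that each $\operatorname{FP}_{d,r}$ is closed in $\calP$; that argument applies verbatim to this space. So each $\operatorname{FP}_{d,r}$ is Borel, and so is the countable union $\operatorname{FP}$. By Step 2, both belong to $\sigma(\nu_A)$.

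\textbf{Main obstacle.} The only delicate point is second countability of the topology, i.e.\ that it is generated by the $U_{S,J}$ with countably many choices. This needs the convention that $S$ ranges over finite sets of isomorphism classes of finite groups, of which there are countably many. We would cite \cite{LW} or \cite{SawinWood3M} for the description of the topology via these basic sets.
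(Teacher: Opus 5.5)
Your proposal is correct and follows the same route as the paper. Each $\nu_A$ is continuous because it factors through the level-$\overline{\{A\}}$ completion; each basic clopen $U_{S,J}$ is a countable intersection of events defined by the $\nu_A$, using that $\Gp^{\barS}$ is the largest (finite) quotient of $\Gp$ in $\barS$; and Proposition~\ref{prop:SW-measurability} handles $\operatorname{FP}_{d,r}$ and $\operatorname{FP}$. The only difference is cosmetic: the paper requires $\nu_A(\Gp)=\nu_A(J)$ for all $A\in\barS$ and gets epimorphisms in both directions, whereas you use $\{\nu_J>0\}$ together with the vanishing of $\nu_{J'}$ for larger $J'\in\barS$, which shows slightly more, namely that the events $E(A)$ alone already generate the Borel $\sigma$-algebra.
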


\begin{proof}
Each $\nu_A$ is continuous: its value is determined by the finite completion $\Gp^{\overline{\{A\}}}$. Conversely, for a finite set $S$ and a finite pro-$\barS$ group $H$,
\begin{equation}\label{eq:clopen-via-moments}
U_{S,H}=\bigcap_{A\in\barS}\{\Gp\in\calP:\nu_A(\Gp)=\nu_A(H)\}.
\end{equation}
Indeed, all maps to groups in $\barS$ factor through $\Gp^{\barS}$. If the equalities on the right hold, put $K=\Gp^{\barS}$. Both $H$ and $K$ are finite groups in $\barS$; taking $A=H$ and $A=K$ gives epimorphisms in both directions, so $H\cong K$. This proves \eqref{eq:clopen-via-moments}. The intersections are countable, and the sets $U_{S,H}$ form a countable basis. The two $\sigma$-algebras therefore agree. The final assertion follows from Proposition \ref{prop:SW-measurability}.
\end{proof}

For a nonabelian finite simple group $S$ and $\Gp\in\calP$, put
\begin{equation}\label{eq:def-QS}
Q_S(\Gp):=\frac{\nu_S(\Gp)}{|\Aut(S)|}.
\end{equation}
This counts the open normal subgroups with quotient isomorphic to $S$.

\begin{lemma}\label{lem:simple-poisson}
Let $S_1,\ldots,S_e$ be pairwise nonisomorphic nonabelian finite simple groups and let $t_1,\ldots,t_e\geq0$. For every $\Gp\in\calP$,
\begin{equation}\label{eq:goursat-count}
\nu_{S_1^{t_1}\times\cdots\times S_e^{t_e}}(\Gp)=\prod_{i=1}^e|\Aut(S_i)|^{t_i}\bigl(Q_{S_i}(\Gp)\bigr)_{t_i},
\end{equation}
where $(x)_t=x(x-1)\cdots(x-t+1)$. Under $\mu_u$, the variables $(Q_S)_S$ are independent, with
\begin{equation}\label{eq:simple-Poisson-PFG}
Q_S\sim\operatorname{Pois}(\lambda_{S,u}),\qquad
\lambda_{S,u}=\frac{|S|^{-u}}{|\Aut(S)|}=\frac{|S|^{-u-1}}{|\Out(S)|}.
\end{equation}
\end{lemma}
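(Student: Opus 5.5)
The proof has two parts: first the purely group-theoretic identity \eqref{eq:goursat-count}, then the distributional statement, which I would derive from the moment formula of Proposition~\ref{prop:LW-moments} and a moment-determinacy argument for Poisson laws.

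\textbf{Step 1: the counting identity.} Let $\varphi:\Gp\twoheadrightarrow S_1^{t_1}\times\cdots\times S_e^{t_e}$. Composing with the coordinate projections gives a family of epimorphisms onto the $S_i$, with kernels $N_{i,1},\dots,N_{i,t_i}$.

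I would check three things:
\begin{enumerate}
\item These kernels are pairwise distinct. If $N_{i,j}=N_{i,k}$ with $j\neq k$, the image of $\varphi$ would lie in a subgroup of the form $\{(x,\sigma x)\}$ in those two coordinates, so $\varphi$ would not be surjective.
\item Conversely, a tuple of epimorphisms $\psi_{i,j}:\Gp\to S_i$ with distinct kernels yields a surjection onto the product. By Goursat's lemma the image is a subdirect product of nonabelian simple groups. Such a subgroup is a product of diagonals, and a diagonal between two factors forces either nonisomorphic factors to be identified or two kernels to coincide. Both are excluded.
\item For a fixed ordered tuple of distinct normal subgroups with quotients $S_i$, the number of epimorphisms realizing it is $\prod_i|\Aut(S_i)|^{t_i}$.
\end{enumerate}
The number of ordered choices of $t_i$ distinct normal subgroups with quotient $S_i$ is $(Q_{S_i})_{t_i}$. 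Multiplying these counts gives \eqref{eq:goursat-count}. This also uses the remark after \eqref{eq:def-QS} that $Q_S$ counts normal subgroups, which is the same free-action argument as in Remark~\ref{rem:quotient-event}. Both sides are finite on $\calP$.

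\textbf{Step 2: joint factorial moments.} Apply Proposition~\ref{prop:LW-moments} with $H=\prod S_i^{t_i}$. Dividing by $\prod|\Aut(S_i)|^{t_i}$ gives
$$
\E_{\mu_u}\Bigl[\prod_i (Q_{S_i})_{t_i}\Bigr]=\prod_i\Bigl(\frac{|S_i|^{-u}}{|\Aut(S_i)|}\Bigr)^{t_i}=\prod_i\lambda_{S_i,u}^{t_i}.
$$
These are exactly the joint factorial moments of independent Poisson variables with parameters $\lambda_{S_i,u}$. The second expression for $\lambda_{S,u}$ in \eqref{eq:simple-Poisson-PFG} follows from $|\Aut(S)|=|S||\Out(S)|$.

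\textbf{Step 3: identifying the law.} Independence of the whole family means independence of every finite subfamily, so it suffices to identify the joint law of finitely many $Q_{S_i}$. The joint factorial moments grow like $\prod\lambda^{t}$, so the joint Poisson law is determined by its moments. I would justify this by the standard criterion: the factorial-moment generating function is entire. Concretely,
$$
\E\Bigl[\prod_i(1+z_i)^{Q_{S_i}}\Bigr]=\sum_{t}\prod_i\frac{z_i^{t_i}}{t_i!}\,\E\Bigl[\prod_i(Q_{S_i})_{t_i}\Bigr]
$$
converges absolutely for all complex $z$. Taking $z_i=-1$ plus derivatives, or using inclusion–exclusion, recovers the point probabilities. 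The interchange of sum and expectation is justified by nonnegativity when the $z_i$ are positive. This gives independence and the Poisson marginals.

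The main obstacle is the Goursat step in Step 1: showing that distinct kernels, together with nonisomorphic $S_i$, suffice for joint surjectivity. The rest is routine moment-method bookkeeping.
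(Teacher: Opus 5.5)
Your proposal is correct and follows the paper's proof essentially step for step: the same Goursat count of ordered tuples of distinct kernels, the same factorial moments from Proposition~\ref{prop:LW-moments}, and recovery of the point probabilities by expanding at $z_i=-1$, which is exactly the paper's inclusion--exclusion identity. One step needs tightening: the interchange of sum and expectation at $z_i=-1$ (and for the derivatives there) does not follow from nonnegativity. It follows by domination by $\prod_i(1+|z_i|)^{Q_{S_i}}$, whose expectation is finite by your positive-$z$ computation, and this is precisely the paper's absolute-summability check.
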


\begin{proof}
By Goursat's lemma, a tuple of epimorphisms to nonabelian simple groups maps onto their product if and only if its coordinate kernels are pairwise distinct. There are $|\Aut(S_i)|$ epimorphisms with each prescribed kernel and quotient $S_i$. Counting ordered choices of distinct kernels proves \eqref{eq:goursat-count}. Proposition \ref{prop:LW-moments} gives
$$
\E_{\mu_u}\prod_{i=1}^e(Q_{S_i})_{t_i}=\prod_{i=1}^e\lambda_{S_i,u}^{t_i}.
$$
These mixed factorial moments determine the joint law. To see this directly, for integers $k_i\geq0$ use the pointwise identity
$$
\prod_{i=1}^e\mathbf 1_{\{Q_{S_i}=k_i\}}
=\sum_{t_1\geq k_1,\ldots,t_e\geq k_e}
\prod_{i=1}^e(-1)^{t_i-k_i}\binom{t_i}{k_i}\binom{Q_{S_i}}{t_i}.
$$
The expectations of the absolute values sum to
$$
\prod_{i=1}^e\sum_{t_i\geq k_i}\binom{t_i}{k_i}\frac{\lambda_{S_i,u}^{t_i}}{t_i!}<\infty.
$$
Taking expectations termwise therefore gives
$$
\mu_u\{Q_{S_i}=k_i\ (1\leq i\leq e)\}
=\prod_{i=1}^e e^{-\lambda_{S_i,u}}\frac{\lambda_{S_i,u}^{k_i}}{k_i!}.
$$
This proves the assertion for every finite subfamily, and hence for the whole family.
\end{proof}

\begin{lemma}\label{lem:positive-relator-count}
If $u<0$, then $d(\Gp)>-u$ for $\mu_u$-almost every $\Gp$.
\end{lemma}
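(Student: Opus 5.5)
We argue by contradiction through the simple quotients of $\Gp$, using Lemma \ref{lem:simple-poisson}. Put $k=-u\geq1$; we must show $\mu_u\{\Gp:d(\Gp)\leq k\}=0$. The key observation is deterministic. If $d(\Gp)\leq k$, fix a topological generating $k$-tuple of $\Gp$. Every epimorphism $\Gp\twoheadrightarrow S$ is determined by the image of this tuple, and that image lies in $\Gen_k(S)$. Hence
$$
\nu_S(\Gp)\leq|\Gen_k(S)|,\qquad\text{so}\qquad Q_S(\Gp)\leq\frac{|\Gen_k(S)|}{|\Aut(S)|}=\lambda_{S,u}\,P_k(S),
$$
where $P_k(S)=|\Gen_k(S)|/|S|^k$ and $\lambda_{S,u}=|S|^k/|\Aut(S)|$ by \eqref{eq:simple-Poisson-PFG}. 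Thus, for every single nonabelian simple $S$,
$$
\{d(\Gp)\leq k\}\subseteq\{Q_S\leq\lambda_{S,u}P_k(S)\}.
$$
It therefore suffices to find a sequence of simple groups $S$ along which the probability of the right-hand event tends to $0$. No independence across different $S$ is needed.

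For $k=1$ the argument is immediate. Since $S$ is not cyclic, $\Gen_1(S)=\varnothing$, so the event is $\{Q_S=0\}$. Lemma \ref{lem:simple-poisson} gives this event probability $e^{-\lambda_{S,u}}$ with $\lambda_{S,u}=1/|\Out(S)|$. Taking $S=A_n$ for $n\geq7$ gives $|\Out(S)|=2$. By the independence in Lemma \ref{lem:simple-poisson}, the joint event over all $n$ has probability $\prod_n e^{-1/2}=0$. Alternatively, one could reduce $k=1$ to the general argument below.

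For $k\geq2$, take $S=A_n$ with $n$ large and use Chebyshev's inequality for the Poisson variable $Q_S$, which has mean and variance $\lambda=\lambda_{S,u}$. We need a lower bound on the non-generation probability $\eta_n:=1-P_k(A_n)$. We bound it below by the probability that all $k$ coordinates fix the point $1$. This gives $\eta_n\geq n^{-k}$; any bound polynomial in $1/n$ would suffice. Then
$$
\mu_u\{Q_S\leq\lambda(1-\eta_n)\}\leq\frac{\lambda}{(\eta_n\lambda)^2}=\frac{1}{\eta_n^2\lambda}\leq\frac{n^{2k}\,|\Aut(A_n)|}{(n!/2)^{k}},
$$
which tends to $0$ as $n\to\infty$ because $k\geq2$ makes the factorial dominate. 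Hence $\mu_u\{d(\Gp)\leq k\}=0$. This argument also covers $k=1$ if we instead use $P_1=0$ and the tail $e^{-\lambda}$.

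The only step needing care is the deterministic bound $Q_S\leq|\Gen_k(S)|/|\Aut(S)|$ together with a crude lower bound on $1-P_k(S)$. Both are elementary, so I expect no serious obstacle. The point to double-check is that the bound on $\eta_n$ is strong enough to make $\eta_n^2\lambda_{S,u}\to\infty$. It is, since $\lambda_{S,u}$ grows like $(n!)^{k-1}$.
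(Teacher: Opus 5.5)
Your proof is correct and is essentially the paper's argument: on $\{d(\Gp)\leq k\}$ you use the deterministic bound $Q_{A_n}(\Gp)\leq|\Gen_k(A_n)|/|\Aut(A_n)|$. For $k=1$ you use the independence of the Poisson variables $Q_{A_n}$ of mean $1/2$, and for $k\geq2$ you use Chebyshev with the point-stabilizer bound $1-P_k(A_n)\geq n^{-k}$.

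One caveat: your closing remark that the single-group Chebyshev/tail argument ``also covers $k=1$'' via $e^{-\lambda}$ is false. Indeed $\lambda_{S,-1}=1/|\Out(S)|\leq1$ for every $S$, so $e^{-\lambda}$ does not tend to $0$. For $k=1$, independence across infinitely many $S$ is genuinely needed, as in your main $k=1$ paragraph.
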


\begin{proof}
Put $k=-u$. For $n\geq7$, let $S_n=A_n$ and
$$
\lambda_n=\frac{|S_n|^{k-1}}2,
$$
using $|\Out(A_n)|=2$. If $d(\Gp)\leq k$, precomposition with an epimorphism $\wideF_k\twoheadrightarrow\Gp$ gives
$$
Q_{S_n}(\Gp)\leq\frac{|\Sur(\wideF_k,S_n)|}{|\Aut(S_n)|}.
$$
For $k=1$ the right-hand side is zero. By Lemma \ref{lem:simple-poisson}, the variables $Q_{S_n}$ are independent Poisson variables of mean $1/2$. The probability that the first $N$ of them vanish is $e^{-N/2}$, so $\mu_{-1}\{d(\Gp)\leq1\}=0$.

Suppose $k\geq2$. Every $k$-tuple in a fixed point stabilizer of $A_n$ fails to generate $A_n$, so
$$
\frac{|\Sur(\wideF_k,S_n)|}{|\Aut(S_n)|}
\leq\lambda_n(1-n^{-k}).
$$
Since $Q_{S_n}$ has mean and variance $\lambda_n$, Chebyshev's inequality gives
$$
\mu_{-k}\{d(\Gp)\leq k\}
\leq\mu_{-k}\{Q_{S_n}\leq\lambda_n(1-n^{-k})\}
\leq\frac{n^{2k}}{\lambda_n}\longrightarrow0.
$$
The last limit follows from $|S_n|=n!/2$ and $k\geq2$.
\end{proof}

\begin{proof}[Proof of Theorem \ref{thm:finite-presentation}]
Fix a nontrivial finite group $B$. The union of the sets $U_{\{B\},H}$ of positive $\mu_u$-measure has measure one. Its complement is a countable union of null sets. Intersect these unions over the isomorphism classes of nontrivial finite groups $B$. By Corollary \ref{cor:LW-fg} and Lemma \ref{lem:positive-relator-count}, we obtain a measurable set $\Omega_0$ of measure one such that every $\Gp\in\Omega_0$ satisfies
$$
d(\Gp)<\infty,\qquad d(\Gp)>-u\ \text{if }u<0,
$$
and
$$
\mu_u\bigl(U_{\{B\},\Gp^{\overline{\{B\}}}}\bigr)>0
\qquad\text{for every nontrivial finite group }B.
$$

Fix $\Gp\in\Omega_0$ and put $d=d(\Gp)$. If $d=0$, then $\Gp=1$ and $u\geq0$, so the presentation with no generators and $u$ identity relators suffices. Otherwise $r:=d+u\geq1$. Choose an epimorphism
$$
\pi:F:=\wideF_d\twoheadrightarrow\Gp,\qquad R=\ker\pi.
$$
For a proper open normal subgroup $U\trianglelefteq F$, put
$$
B_U=F/U,\qquad S_U=\{B_U\},\qquad
P_U=F^{\overline{S_U}},\qquad H_U=\Gp^{\overline{S_U}}.
$$
The map $\pi$ induces an epimorphism $\psi_U:P_U\twoheadrightarrow H_U$. Since $\mu_u(U_{S_U,H_U})>0$, convergence of the finite Liu--Wood models gives an epimorphism
$$
F_{n,S_U}\twoheadrightarrow H_U
$$
whose kernel is normally generated by at most $n+u$ elements, for some $n\geq d$. Since $n+u=n-d+r$ and $d(H_U)\leq d$, Lemma \ref{lem:finite-level-relations} shows that $\ker\psi_U$ is normally generated by at most $r$ elements.

Let $p_U:P_U\twoheadrightarrow B_U$ be induced by $F\twoheadrightarrow B_U$. We claim that
\begin{equation}\label{eq:kernel-image}
p_U(\ker\psi_U)=RU/U.
\end{equation}
The image of $R$ in $P_U$ lies in $\ker\psi_U$, proving one inclusion. For the reverse inclusion, let $f\in F$ represent an element of $\ker\psi_U$. Then $\pi(f)$ maps trivially to $H_U$. The group $F/RU$ is a quotient of both $\Gp$ and $B_U$, so it belongs to $\overline{S_U}$ and the map $\Gp\twoheadrightarrow F/RU$ factors through $H_U$. Thus $f\in RU$, proving \eqref{eq:kernel-image}.

It follows that $RU/U$ is normally generated by at most $r$ elements in $F/U$. For $U=F$ this is automatic. Lemma \ref{lem:compactness-relations} therefore shows that $R$ is the closed normal closure of at most $r$ elements. Adding if necessary the identity a few times, gives a presentation of $\Gp$ with $d$ generators and $d+u$ relators.

Finally, put $d_0=0$ if $u\geq0$, and $d_0=1-u$ if $u<0$. The following presentation event is
$$
\bigcup_{d\geq d_0}\bigl(\{\Gp:d(\Gp)=d\}\cap\operatorname{FP}_{d,d+u}\bigr).
$$
It is Borel by Section \ref{sec:witnesses} and Proposition \ref{prop:LW-measurability}, and contains $\Omega_0$.
\end{proof}

\section{Generator-rank asymptotics}\label{sec:generator-rank-asymptotics}
For $u\in\Z$ and $r\geq0$, put
$$
\mathsf p_u(r):=\mu_u\{\Gp\in\calP:d(\Gp)=r\}.
$$
These sets are measurable by Proposition \ref{prop:Em-measurable}. For $r\geq1$ they equal $E_{r-1}\setminus E_r$, and for $r=0$ they equal $\calP\setminus E_0$. We prove Theorem \ref{thm:generator-rank-first-order} by comparing $d(\Gp)$ with the rank of its maximal elementary abelian $2$-quotient.

For a prime power $q$ and $j\geq0$, set
\begin{equation}\label{eq:eta-q}
\eta_q(j):=\prod_{i=1}^j(1-q^{-i}),\qquad \eta_q(\infty):=\prod_{i=1}^{\infty}(1-q^{-i}).
\end{equation}
Thus $c_0=\eta_2(\infty)$. For an integer $a$ and $h\geq0$, put
\begin{equation}\label{eq:def-pi-qa}
\pi_{q,a}(h):=
\begin{cases}
\dfrac{\eta_q(\infty)}{\eta_q(h)\,\eta_q(h+a)}\,q^{-h(h+a)}, & h+a\geq0,\\[2mm]
0, & h+a<0.
\end{cases}
\end{equation}

\subsection{Crown witnesses}
Let $\mathscr L$ contain one representative of each isomorphism class of finite monolithic primitive groups. We extend the definition of $f_L(m)$ from Section \ref{sec:crowns} by putting $f_L(m)=1$ when $d(L)>m$. Thus, for every $m\geq0$,
$$
f_L(m)=\min\{t\geq1:d(L_t)>m\}.
$$
For $L\in\mathscr L$, put
\begin{equation}\label{eq:def-WLm}
W_L(m):=\{\Gp\in\calP:\Gp\twoheadrightarrow L_{f_L(m)}\}.
\end{equation}
The function $f_L(m)$ is nondecreasing in $m$, and coordinate projections give $W_L(m+1)\subseteq W_L(m)$.

\begin{proposition}\label{prop:exact-crown-filtration}
For every integer $m\geq2$,
\begin{equation}\label{eq:exact-filtration}
E_m=\bigcup_{\substack{L\in\mathscr L\\d(L)\leq m}}W_L(m)=\bigcup_{L\in\mathscr L}W_L(m).
\end{equation}
\end{proposition}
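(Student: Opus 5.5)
We prove the two equalities in \eqref{eq:exact-filtration} by a chain of inclusions:
$$
E_m\subseteq\bigcup_{\substack{L\in\mathscr L\\d(L)\leq m}}W_L(m)\subseteq\bigcup_{L\in\mathscr L}W_L(m)\subseteq E_m .
$$
The middle inclusion is trivial, since the first union is indexed by a subfamily of $\mathscr L$.

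\emph{First inclusion.} Let $\Gp\in E_m$. By Proposition \ref{prop:finite-quotient-detects-generation}, $\Gp$ has a finite continuous quotient $H$ with $d(H)>m$. Since $m\geq2$, Theorem \ref{thm:crown-witnesses} gives a monolithic primitive group $L$ with $d(L)\leq m$ and an epimorphism $H\twoheadrightarrow L_{f_L(m)}$. Replace $L$ by its isomorphic representative in $\mathscr L$. This does not change $f_L(m)$, since the crown-based powers $L_t$ depend on $L$ only up to isomorphism. Composing the epimorphisms $\Gp\twoheadrightarrow H\twoheadrightarrow L_{f_L(m)}$ shows $\Gp\in W_L(m)$.

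\emph{Last inclusion.} Let $\Gp\in W_L(m)$ for some $L\in\mathscr L$. By the extended definition, $f_L(m)=\min\{t\geq1:d(L_t)>m\}$ in all cases. For $d(L)\leq m$ this is \eqref{eq:def-fLm}. For $d(L)>m$ we have $f_L(m)=1$, which is indeed the minimum because $L_1=L$. Either way $d(L_{f_L(m)})>m$, and this minimum exists because $d(L_t)$ is unbounded. Thus $\Gp$ has a finite quotient that is not $m$-generated, so $d(\Gp)>m$ by the easy direction of Proposition \ref{prop:finite-quotient-detects-generation}.

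All the substantive content sits in the first inclusion, and there it is supplied by Theorem \ref{thm:crown-witnesses}. The remaining work is bookkeeping: checking that the extended definition of $f_L(m)$ is consistent, which is the only point needing care.
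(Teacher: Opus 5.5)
Your proposal is correct and follows the paper's proof: the reverse inclusion comes from $d(L_{f_L(m)})>m$ by the definition of $f_L(m)$, and the forward inclusion from Proposition~\ref{prop:finite-quotient-detects-generation} combined with Theorem~\ref{thm:crown-witnesses}. Your two extra checks are both sound additions that the paper leaves implicit: passing to the representative in $\mathscr L$, and the extended convention $f_L(m)=1$ when $d(L)>m$.
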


\begin{proof}
If $\Gp\twoheadrightarrow L_{f_L(m)}$, then $d(\Gp)>m$ by the definition of $f_L(m)$. Conversely, if $d(\Gp)>m$, Proposition \ref{prop:finite-quotient-detects-generation} gives a finite quotient $H$ with $d(H)>m$. Theorem \ref{thm:crown-witnesses} then gives $L\in\mathscr L$ with $d(L)\leq m$ and $H\twoheadrightarrow L_{f_L(m)}$.
\end{proof}

For $L=\mathbb{F}_2$, one has $f_{\mathbb{F}_2}(m)=m+1$, so
\begin{equation}\label{eq:C2-witness}
W_{\mathbb{F}_2}(m)=\{\Gp:\Gp\twoheadrightarrow \mathbb{F}_2^{m+1}\}.
\end{equation}
We compute the distribution of these quotients and then bound the contribution of the remaining groups $L$.

\subsection{The elementary abelian \texorpdfstring{$2$}{2}-quotient}
For $\Gp\in\calP$, the quotient $\Gp^{\overline{\{\mathbb{F}_2\}}}$ is a finite elementary abelian $2$-group. Put
$$
N(\Gp):=\dim_{\F_2}\Hom(\Gp,\mathbb{F}_2)=\dim_{\F_2}H^1(\Gp,\F_2).
$$
Here homomorphisms are continuous, and $\mathbb{F}_2$ is identified with the additive group of the trivial module $\F_2$. Then
$$
\Gp^{\overline{\{\mathbb{F}_2\}}}\cong \mathbb{F}_2^{N(\Gp)},\qquad
\{N=h\}=U_{\{\mathbb{F}_2\},\mathbb{F}_2^h}.
$$
In particular, $N$ is finite and measurable on $\calP$.

\begin{proposition}\label{prop:F2-rank-law}
For every $u\in\Z$ and $t\geq0$,
\begin{equation}\label{eq:F2-moments}
\E_{\mu_u}\prod_{i=0}^{t-1}\bigl(2^N-2^i\bigr)=2^{-ut}.
\end{equation}
Moreover, for every $h\geq0$,
\begin{equation}\label{eq:F2-law}
\mu_u\{N=h\}=\pi_{2,u}(h).
\end{equation}
\end{proposition}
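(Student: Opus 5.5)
The plan has two parts: compute the moments from Proposition \ref{prop:LW-moments}, then invert them, since these moments determine the law.

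\emph{Moments.} Since $\Gp^{\overline{\{\mathbb{F}_2\}}}\cong\mathbb{F}_2^{N(\Gp)}$, every homomorphism $\Gp\to\mathbb{F}_2^t$ factors through $\mathbb{F}_2^{N}$. Hence
$$
\nu_{\mathbb{F}_2^t}(\Gp)=|\Sur(\mathbb{F}_2^N,\mathbb{F}_2^t)|=\prod_{i=0}^{t-1}(2^N-2^i),
$$
which is the number of surjective linear maps $\F_2^N\to\F_2^t$, counted by choosing the images of dual basis vectors. Proposition \ref{prop:LW-moments} with $H=\mathbb{F}_2^t$ then gives \eqref{eq:F2-moments} directly, because $|H|^{-u}=2^{-ut}$. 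For $t=0$ the identity is trivial.

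\emph{Inversion.} I will use the standard $q$-binomial inversion, in the same spirit as the factorial-moment inversion in Lemma \ref{lem:simple-poisson}. Write $\binom{N}{t}_2$ for the number of $t$-dimensional subspaces of $\F_2^N$, so that the moment variable equals $|\GL_t(\F_2)|\binom{N}{t}_2$. Pointwise there is the $q$-analogue of the inclusion–exclusion identity
$$
\mathbf 1_{\{N=h\}}=\sum_{t\geq h}(-1)^{t-h}2^{\binom{t-h}{2}}\binom{t}{h}_2\binom{N}{t}_2 .
$$
This is Möbius inversion on the subspace lattice: the Möbius function between subspaces whose dimensions differ by $k$ is $(-1)^k2^{\binom k2}$. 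The sum is finite for each value of $N$.

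The main obstacle is justifying that expectations may be taken termwise. For this I will bound
$$
\sum_t 2^{\binom{t-h}{2}}\binom{t}{h}_2\,\E\binom{N}{t}_2,
\qquad
\E\binom{N}{t}_2=\frac{2^{-ut}}{|\GL_t(\F_2)|}\approx 2^{-t^2-ut}.
$$
The factor $2^{\binom{t-h}{2}}\binom t h_2$ grows only like $2^{t^2/2+O_h(t)}$, so the series converges absolutely and dominated convergence applies. Taking expectations then gives the explicit series
$$
\mu_u\{N=h\}=\sum_{t\geq h}(-1)^{t-h}2^{\binom{t-h}2}\binom t h_2\frac{2^{-ut}}{|\GL_t(\F_2)|}.
$$

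It remains to identify this series with $\pi_{2,u}(h)$. Substitute $t=h+k$ and factor out $\frac{2^{-uh}}{|\GL_h(\F_2)|}$. The remaining sum over $k$ is a Cauchy/Euler $q$-series, of the form $\sum_k(-1)^k q^{\binom k2}x^k/(q;q)_k$ in $q^{-1}$-normalized form, evaluated at $x=2^{-(h+u)-1}$-type arguments. It equals an infinite product that collapses to $\eta_2(\infty)/\eta_2(h+u)$ when $h+u\geq0$. When $h+u<0$ the product contains the factor $(1-1)$ and the sum vanishes, which matches the definition of $\pi_{2,u}(h)$. Combining this with $|\GL_h(\F_2)|=2^{h^2}\eta_2(h)$ yields \eqref{eq:F2-law}.

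As a fallback, one can instead check that $\pi_{2,u}$ is a probability distribution with the same moments; this is the Cohen–Lenstra computation. Uniqueness then follows because the moments $2^{-ut}$ grow slowly relative to $|\GL_t|$, again by the termwise argument above.
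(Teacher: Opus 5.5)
Your proposal is correct, but for the law of $N$ you take a different route from the paper; the moment computation is the same. The paper never inverts moments. It returns to the finite models $X_{u,n}$, notes that $N(X_{u,n})$ is the corank of a uniform $n\times(n+u)$ matrix over $\F_2$, counts the matrices of each rank exactly, and lets $n\to\infty$, using weak convergence and the fact that $\{N=h\}$ is clopen.

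You instead use only $\E\binom{N}{t}_2=2^{-ut}/|\GL_t(\F_2)|$ and invert it by M\"obius inversion on the subspace lattice. Your pointwise identity is right: via $\binom{N}{t}_2\binom{t}{h}_2=\binom{N}{h}_2\binom{N-h}{t-h}_2$ it reduces to
\[
\sum_k(-1)^k2^{\binom k2}\binom{n}{k}_2=\prod_{i=0}^{n-1}(1-2^i)=\mathbf 1_{\{n=0\}}.
\]
Your domination estimate $2^{-t^2/2+O_{h,u}(t)}$ justifies taking expectations termwise.

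The one step you should make precise is the final evaluation. Using
\[
\binom{h+k}{h}_2\Big/|\GL_{h+k}(\F_2)|=1\Big/\bigl(|\GL_h(\F_2)|\,|\GL_k(\F_2)|\,2^{hk}\bigr)
\]
and $|\GL_k(\F_2)|=2^{k^2}\eta_2(k)$, the inner sum is exactly
\[
\sum_{k\geq0}\frac{(-1)^k2^{-\binom k2}x^k}{\eta_2(k)}=\prod_{i\geq0}\bigl(1-2^{-i}x\bigr),\qquad x=2^{-(h+u+1)}.
\]
This is Euler's identity with $q=1/2$. Both sides are entire in $x$, so it also holds for $x\geq1$, which is what you need for the vanishing when $h+u<0$. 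When $h+u\geq0$ the product is $\eta_2(\infty)/\eta_2(h+u)$, which gives $\pi_{2,u}(h)$.

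As for the trade-off: the paper's argument is a finite, purely combinatorial count with no convergence issues, but it relies on the presentation model underlying $\mu_u$. Yours shows that the $\F_2^t$-moments alone determine the law of $N$, so it applies verbatim to any measure with these moments. The price is an absolute-convergence check and a $q$-series identity.
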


\begin{proof}
A homomorphism $\Gp\to \mathbb{F}_2^t$ is surjective if and only if its $t$ coordinates are linearly independent in $\Hom(\Gp,\mathbb{F}_2)$. Hence
\begin{equation}\label{eq:C2-count}
\nu_{\mathbb{F}_2^t}(\Gp)=\prod_{i=0}^{t-1}\bigl(2^{N(\Gp)}-2^i\bigr).
\end{equation}
Proposition \ref{prop:LW-moments} gives \eqref{eq:F2-moments}.

For the distribution, consider the finite model $X_{u,n}$, with $n+u\geq0$. The images of its $n+u$ Haar-random relators in $\wideF_n^{\overline{\{\mathbb{F}_2\}}}=\mathbb{F}_2^n$ are independent and uniform. Thus $N(X_{u,n})$ is the dimension of the cokernel of a uniform $n$ by $n+u$ matrix over $\F_2$. The number of such matrices of rank $k$ is
$$
\prod_{i=0}^{k-1}\frac{(2^n-2^i)(2^{n+u}-2^i)}{2^k-2^i}.
$$
Indeed, a rank-$k$ map factors as a surjection $\F_2^{n+u}\twoheadrightarrow\F_2^k$ followed by an injection $\F_2^k\hookrightarrow\F_2^n$. Each map has $|\GL_k(2)|$ such factorizations. Taking $k=n-h$ and dividing by $2^{n(n+u)}$ gives
$$
\Pr\{N(X_{u,n})=h\}
=\frac{\eta_2(n)\eta_2(n+u)}{\eta_2(h)\eta_2(h+u)\eta_2(n-h)}\,2^{-h(h+u)}
$$
for $\max\{0,-u\}\leq h\leq n$, and the probability is zero otherwise. Since $\{N=h\}$ is clopen, weak convergence of $X_{u,n}$ to $\mu_u$ allows us to let $n\to\infty$. The result is \eqref{eq:F2-law}.
\end{proof}

\subsection{The remaining witnesses}
Put $a=\max\{0,-u\}$, so that $|H|^{-u}\leq|H|^a$ for every finite group $H$.

\begin{lemma}\label{lem:non-C2-witnesses}
For integers $m\geq3$, put
$$
\mathscr W_m:=\bigcup_{\substack{L\in\mathscr L,\ L\not\cong \mathbb{F}_2\\d(L)\leq m}}W_L(m).
$$
There are constants $C_u,m_u>0$, depending only on $u$, such that for all integers $m\geq m_u$,
\begin{equation}\label{eq:non-C2-bound}
\mu_u(\mathscr W_m)\leq3^{-(m-2)^2+C_um}.
\end{equation}
\end{lemma}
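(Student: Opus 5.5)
We will rerun the estimates of Section \ref{sec:estimates} with the Liu--Wood constants $M=1$, $\alpha=a=\max\{0,-u\}$, $\delta=0$. The improvement comes from excluding $\mathbb{F}_2$ and tracking the smallest admissible $q$ or $N$. We split $\mathscr W_m$ into abelian-type witnesses and nonabelian-type witnesses, and we use $W_L(m)\subseteq E(L_{t_m(L)})$, which holds by the bounds of Theorem \ref{thm:crown-witnesses}.

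\textbf{Nonabelian part.} Proposition \ref{prop:nonabelian-crown-estimate} already bounds this contribution by $\frac{60}{59}60^{-60^{m-2}}$ once $m\geq a+4$. This is far smaller than $3^{-(m-2)^2}$.

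\textbf{Abelian part.} Since $\delta=0$, we may take $s=t$ in \eqref{eq:one-abelian-crown}; equivalently we use Remark \ref{rem:quotient-event} together with Proposition \ref{prop:crown-automorphisms}(i). This gives
$$
\mu_u(E(L_t))\leq c_0^{-1}|K|^{a}q^{-t^2+abt}.
$$
We then sum over subgroups $K\leq\GL_b(q)$ with $d(K)\leq m$ as in \eqref{eq:head-count}, which contributes at most $q^{(m+a)b^2}$. With $t=b(m-2)+1$, each pair $(q,b)$ contributes at most
$$
c_0^{-1}q^{-b^2(m-2)^2+O_u(mb^2)}.
$$
The key observation is that the only datum with $q=2$ and $b=1$ is $K\leq\GL_1(2)=1$, that is, $L\cong\mathbb{F}_2$, which has been excluded. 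Every remaining pair therefore has $q\geq3$, or $q=2$ and $b\geq2$. In the first case the bound is at most $3^{-b^2(m-2)^2+O(mb^2)}$. In the second it is at most $2^{-4(m-2)^2+O(m)}\leq 3^{-(m-2)^2}$ for large $m$.

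There is one catch when $b$ is large: the error term $O(mb^2)$ must be absorbed by the quadratic saving uniformly in $b$. Writing the exponent as $-b^2\bigl((m-2)^2-C'm\bigr)$ handles this for $m\geq m_u$. Summing over prime powers $q\geq3$ by the integral test, and over $b\geq1$, then gives a total of $O(1)\cdot 3^{-(m-2)^2+C'm}$. The constant $c_0^{-1}$ and the summation constants are absorbed into $C_u m$.

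\textbf{Main obstacle.} The delicate point is the case $q=2$, $b\geq2$. The head count $q^{(m+a)b^2}$ is linear in $m$ per unit of $b^2$, while the saving is $2^{-b^2(m-2)^2}$. Comparing to base $3$ requires $4\log 2>\log 3$ at $b=2$, which holds, so this case is fine. We also need to check that the $q=2$, $b=1$ exclusion is exact, i.e.\ that every other abelian-type monolith with $b=1$ over $\F_2$ is impossible, since $\GL_1(2)$ is trivial. This step is clean.
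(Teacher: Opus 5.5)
Your proposal is correct and follows essentially the same route as the paper. It combines the Liu--Wood moment bound (with $\alpha=a$, $\delta=0$) with the $\GL_t(q)$ automorphism count, uses the head count over $K\leq\GL_b(q)$ and the observation that $(q,b)=(2,1)$ forces $L\cong\mathbb{F}_2$, and invokes Proposition \ref{prop:nonabelian-crown-estimate} for the nonabelian part.

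The differences are only cosmetic. You evaluate directly at $t=t_m(L)$ rather than at $t=f_L(m)$ using monotonicity of $abt-t^2$. You also split the $(q,b)$-sum into the cases $q\geq3$ and $q=2,\ b\geq2$, whereas the paper indexes by $\kappa=q^{b^2}\geq3$.
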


\begin{proof}
Suppose first that $L=V\rtimes K$ is of abelian type, with $d(L)\leq m$ and $L\not\cong \mathbb{F}_2$. Write $q=|\End_K(V)|$, $b=\dim_{\End_K(V)}V$ and $t=f_L(m)$. By \eqref{eq:abelian-f-gap}, one has $t>b(m-2)$. If $(q,b)=(2,1)$, then $K\leq\GL_1(2)=1$ and $L=\mathbb{F}_2$, contrary to our assumption. Thus $q^{b^2}\geq3$.

Remark \ref{rem:quotient-event} and Propositions \ref{prop:crown-automorphisms} and \ref{prop:LW-moments} give
$$
\mu_u(W_L(m))\leq\frac{\E_{\mu_u}\nu_{L_t}}{|\Aut(L_t)|}
\leq\frac{|L_t|^{-u}}{c_0q^{t^2}}
\leq c_0^{-1}|K|^aq^{abt-t^2}.
$$
For $m\geq a/2+3$, the exponent $abt-t^2$ is decreasing in $t$ on $t\geq b(m-2)$, so
$$
\mu_u(W_L(m))\leq c_0^{-1}|K|^aq^{ab^2(m-2)-b^2(m-2)^2}.
$$
As in \eqref{eq:head-count}, counting generating $m$-tuples bounds the number of possible subgroups $K\leq\GL_b(q)$ by $q^{mb^2}$. Also $|K|^a\leq q^{ab^2}$. Put
$$
A_m=(m-2)^2-a(m-2)-(m+a).
$$
Summing over $L$ with fixed $(q,b)$ gives at most $c_0^{-1}q^{-A_mb^2}$. Each integer $\kappa=q^{b^2}$ arises from at most $1+\log_2\kappa\leq\kappa$ pairs $(q,b)$. Therefore the contribution from abelian $L$ is at most
$$
c_0^{-1}\sum_{\kappa\geq3}\kappa^{-(A_m-1)}
\leq4c_0^{-1}\,3^{-(A_m-1)}
$$
when $A_m\geq5$, by comparison with the integral from $3$ to $\infty$. Since $A_m=(m-2)^2-O_u(m)$, this is at most $\frac12\,3^{-(m-2)^2+C_um}$ for a suitable $C_u$ and all sufficiently large $m$.

Now suppose that $L$ is of nonabelian type and $d(L)\leq m$. Put $v=|\soc(L)|$. By \eqref{eq:nonabelian-f-gap}, one has $f_L(m)>v^{m-2}$. Coordinate projection gives
$$
W_L(m)\subseteq E(L_{v^{m-2}+1}).
$$
Thus the contribution from nonabelian $L$ is at most $\mu_u(B_m^{\mathrm{nonab}})$. Proposition \ref{prop:nonabelian-crown-estimate}, with $M=1$, $\alpha=a$ and $\delta=0$, gives
$$
\mu_u(B_m^{\mathrm{nonab}})\leq\frac{60}{59}\,60^{-60^{m-2}}
$$
for $m\geq a+4$. This is smaller than $\frac12\,3^{-(m-2)^2+C_um}$ for all sufficiently large $m$. Adding the two estimates proves \eqref{eq:non-C2-bound}.
\end{proof}

\begin{proof}[Proof of Theorem \ref{thm:generator-rank-first-order}]
For every $\Gp\in\calP$, one has $N(\Gp)\leq d(\Gp)$, since $\mathbb{F}_2^{N(\Gp)}$ is a quotient of $\Gp$. We claim that, for $r\geq4$,
\begin{equation}\label{eq:symmetric-difference}
\{d(\Gp)=r\}\,\triangle\,\{N(\Gp)=r\}
\subseteq\mathscr W_{r-1}\cup\mathscr W_r.
\end{equation}
If $d(\Gp)=r$ and $N(\Gp)<r$, Proposition \ref{prop:exact-crown-filtration} gives a witness $W_L(r-1)$ with $d(L)\leq r-1$. By \eqref{eq:C2-witness}, $L$ cannot be $\mathbb{F}_2$, so $\Gp\in\mathscr W_{r-1}$. If $N(\Gp)=r$ and $d(\Gp)>r$, the same argument at $m=r$ gives $\Gp\in\mathscr W_r$. This proves \eqref{eq:symmetric-difference}.

Lemma \ref{lem:non-C2-witnesses} now gives, for all sufficiently large $r$,
\begin{align*}
\bigl|\mathsf p_u(r)-\mu_u\{N=r\}\bigr|
&\leq\mu_u(\mathscr W_{r-1})+\mu_u(\mathscr W_r)\\
&\leq2\cdot3^{-(r-3)^2+C_u(r-1)}
\leq3^{-r^2+C_u'r}
\end{align*}
for a suitable $C_u'$. Proposition \ref{prop:F2-rank-law} gives $\mu_u\{N=r\}=\pi_{2,u}(r)$, proving the asserted error bound. Finally, $\eta_2(r)$ and $\eta_2(r+u)$ tend to $\eta_2(\infty)$, while
$$
3^{-r^2+C_u'r}=o\bigl(2^{-r(r+u)}\bigr).
$$
This gives the first assertion of the theorem.
\end{proof}

\begin{remark}\label{rem:higher-order}
We expect further terms in the asymptotic expansion of $\mathsf p_u(r)$ to be indexed by monolithic primitive groups. The crown powers of $C_3$ and $S_3=\mathbb{F}_3\rtimes \mathbb{F}_2$ are the next witnesses to consider. We do not pursue this here.
\end{remark}

\section{Universal perfect profinite groups}\label{sec:perfect-universal}
Nikolov asked whether, for each $d>1$, there is a finitely generated perfect profinite group whose finite quotients include every finite perfect $d$-generated group \cite[Conjecture B1]{NikolovFiniteImages}. We prove the following stronger statement. Here and below, ``$d$-generated'' means ``generated by at most $d$ elements.''

\begin{theorem}\label{thm:nikolov-B1-full}
Let $d\geq2$.
\begin{enumerate}[label=\textup{(\roman*)}]
\item Every finite family $H_1,\ldots,H_s$ of finite $d$-generated perfect groups has a finite $d$-generated perfect common cover: there is a finite perfect group $C$, with $d(C)\leq d$, and epimorphisms
$$
C\twoheadrightarrow H_i\qquad(1\leq i\leq s).
$$
\item There is a topologically $d$-generated profinite group $U_d$ such that
$$
U_d=[U_d,U_d]
$$
as an abstract group and whose finite continuous quotients are precisely the finite perfect groups $H$ with $d(H)\leq d$.
\item $U_d$ is unique: if $\mathcal V$ is a topologically $d$-generated profinite group with $\mathcal V=\overline{[\mathcal V,\mathcal V]}$ whose finite continuous quotients include every finite perfect $d$-generated group, then $\mathcal V\cong U_d$ as topological groups.
\item $U_d$ is projective, $d(U_d)=d$, and $U_d$ admits a profinite presentation with $d$ generators and $d$ relators. Every profinite presentation of $U_d$ has at least $d$ relators.
\end{enumerate}
\end{theorem}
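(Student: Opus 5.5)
The plan is to prove (i) by a generating-tuple amalgamation argument guided by crown theory. Then (ii) follows by an inverse limit, (iii) by the standard fact that finitely generated profinite groups are determined by their finite quotients, and (iv) by a Frattini/Gasch\"utz argument combined with Proposition~\ref{prop:presentation-criterion}.

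For (i), induction on $s$ reduces everything to two groups $H_1,H_2$. The naive cover, the image of $\wideF_d$ in $H_1\times H_2$ under chosen generating tuples, is $d$-generated and subdirect but need not be perfect. For example, $SL_2(5)\times_{A_5}SL_2(5)$ has a nontrivial abelian quotient, since the Schur multiplier of $A_5$ is only $C_2$.

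So the key lemma (the amalgamation lemma) is this: one can choose epimorphisms $\varphi_i:\wideF_d\twoheadrightarrow H_i$ so that the image $C$ of $(\varphi_1,\varphi_2)$ is perfect. I would argue via Goursat. $C$ is the fibre product $H_1\times_Q H_2$ over a common quotient $Q$, and any abelian quotient of $C$ comes from a chief factor of abelian type lying above the common quotient. This means a crown of abelian type $V\rtimes K$, with $K$ a perfect section, on which the two sides are ``glued'' nontrivially.

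I would first try to choose $\varphi_2$ so as to make the common quotient $Q$ as small as possible. Concretely, I would pick $\varphi_2$ so that, for every monolithic primitive $L$, the crown multiplicities of $L$ in $H_1$ and $H_2$ are stacked into a crown-based power $L_t$ with $t<f_L(d)$. That $d$-generation is preserved would then follow from the Dalla Volta--Lucchini description (Theorem~\ref{thm:crown-witnesses}), and perfectness from the fact that no abelian chief factor with trivial action is created. If stacking fails, I would fall back on passing to $C^{(\infty)}$, which still surjects onto each $H_i$, and then try to repair $d$-generation. I expect this step to be the main obstacle: controlling simultaneously that no abelian quotient appears and that no crown reaches multiplicity $f_L(d)$.

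For (ii), enumerate the countably many finite perfect $d$-generated groups as $H_1,H_2,\dots$ and apply (i) repeatedly to obtain $C_{k+1}\twoheadrightarrow C_k$ with $C_{k+1}\twoheadrightarrow H_{k+1}$. By Gasch\"utz's lemma, a generating $d$-tuple of $C_k$ lifts to one of $C_{k+1}$, so $U_d=\varprojlim C_k$ is topologically $d$-generated. Every finite quotient of $U_d$ factors through some $C_k$ and is therefore perfect and $d$-generated. Abstract perfectness follows from Nikolov--Segal: in a finitely generated profinite group $[U_d,U_d]$ is closed, and it is dense because every finite quotient is perfect.

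For (iii), any $\mathcal V$ as in the statement has exactly the same finite quotients as $U_d$: they include all finite perfect $d$-generated groups by hypothesis, and every finite quotient of $\mathcal V$ is $d$-generated and perfect. Two finitely generated profinite groups with the same finite quotients are isomorphic, by the usual compactness argument on the nonempty finite sets $\Sur(\cdot,H)$.

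For (iv), take a finite embedding problem $A\twoheadrightarrow B$, $U_d\twoheadrightarrow B$, and replace $A$ by a minimal subgroup $A'$ surjecting onto $B$. The kernel of $A'\twoheadrightarrow B$ lies in $\Frat(A')$, so $A'$ is perfect and $d(A')=d(B)\leq d$. Hence $A'$ is a quotient of $U_d$. The given map to $B$ can be matched by Gasch\"utz lifting of generating tuples together with the homogeneity coming from (iii).

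Projectivity gives $H^2(U_d,V)=0$ for every finite simple module $V$, so Proposition~\ref{prop:presentation-criterion} yields a presentation with $d(U_d)$ generators and $d(U_d)$ relators. We have $d(U_d)=d$ because $A_5^{k}$ with $k$ maximal subject to $d(A_5^k)\le d$ already needs $d$ generators.

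For the lower bound on relators, take any presentation with $n$ generators and kernel $R$. Use \eqref{eq:five-term} with $V=\F_p$ trivial and $H^1(U_d,\F_p)=H^2(U_d,\F_p)=0$. This gives $\dim\Hom_{U_d}(R_p,\F_p)=n$, so $R_p$ needs at least $n\geq d$ module generators, and hence $r\geq d$.
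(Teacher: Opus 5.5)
There is a genuine gap in part (i), and (ii)--(iv) all rest on it: you never actually construct the common cover. Your "key lemma" is true; the paper's cover is in fact a $d$-generated subdirect subgroup of $H_1\times H_2$. But "stacking the crown multiplicities" gives no rule for choosing $\varphi_2$. Since $d$-generation is automatic for an image of $\wideF_d$, the real issue is perfectness, and that you only assert ("no abelian chief factor with trivial action is created"). The fallback to $C^{(\infty)}$ loses all control of $d$.

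The idea you are missing is to avoid gluing $H_1$ and $H_2$ in one step. Run down a chief series $1=N_0<\cdots<N_r=H_2$, starting from $H_1$ over the trivial quotient $H_2/N_r$. Then each step amalgamates $\alpha:A\twoheadrightarrow Q$ with $\beta:B\twoheadrightarrow Q$, where $N=\ker\beta$ is minimal normal in $B$, and a dichotomy settles that step:
\begin{itemize}
\item If $N$ is central, take $C=(A\times_QB)'$. From $E=E'N$ with $N$ central one gets $E'=E''$, and $C$ maps onto $A$ and $B$. The kernel of $C\to A$ is central in the perfect group $C$, hence lies in $\Frat(C)$, so $d(C)=d(A)\leq d$. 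This is exactly what repairs your $SL_2(5)\times_{A_5}SL_2(5)$ example.
\item If $N$ is noncentral, Gasch\"utz lifts one generating $d$-tuple of $Q$ simultaneously to generating tuples of $A$ and $B$. The subgroup $D$ they generate meets $1\times N$ in a normal subgroup of $B$. So either $D\cong A$, or $D=A\times_QB$, which is perfect because $[N,B]=N$ and $A$ is perfect.
\end{itemize}

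The projectivity step in (iv) also does not go through as written. Your reduction to a minimal $A'\leq A$ over $B$ correctly shows that $A'$ is a perfect $d$-generated quotient of $U_d$. However, Gasch\"utz only lifts $(f(u_1),\ldots,f(u_d))$ to a generating tuple of $A'$. It does not give a homomorphism $U_d\to A'$, since the relations of $U_d$ need not hold for that tuple. Nor does (iii) yield any transitivity of $\Aut(U_d)$ on $\Sur(U_d,B)$.

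What works is the following. Take a closed subgroup $P$ minimal among those mapping onto $U_d$, either inside $\wideF_d$ as in the paper or inside $U_d\times_BA$ for your embedding problem. Lifting a generating $d$-tuple and using minimality shows that $P$ is topologically $d$-generated and $P=\overline{[P,P]}$. Hence $P\cong U_d$ by (iii). The projection $P\twoheadrightarrow U_d$ is then an isomorphism, by the Hopfian property of finitely generated profinite groups. Its inverse gives the section $U_d\to\wideF_d$, or, composed with the projection to $A$, the solution of your embedding problem.

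Granting projectivity, the rest of your (iv) is sound. Feeding $H^2(U_d,V)=0$ into Proposition~\ref{prop:presentation-criterion} is a valid substitute for the paper's idempotent-relator lemma. The five-term sequence with trivial $\F_p$ gives $r\geq n\geq d$. You should still dispose of presentations on infinitely many generators. You should also justify $f_{A_5}(d-1)<f_{A_5}(d)$, which follows from $|\Gen_{m+1}(A_5)|\geq60|\Gen_m(A_5)|$.
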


To construct $U_d$, we show that any two finite perfect $d$-generated groups have a finite perfect $d$-generated common cover. We construct the common cover by induction along a chief series of one of the two groups, applying Lemma \ref{lem:amalgamation} at each step. Its proof uses Frattini subgroups in the central case and Gasch\"utz's lifting lemma in the noncentral case.

We first record the Frattini observation used in the central case.

\begin{lemma}\label{lem:central-frattini}
Let $P$ be a finite perfect group. Then every central subgroup $Z\leq Z(P)$ is contained in $\Frat(P)$. Consequently, if $P\twoheadrightarrow Q$ has central kernel, then
$$
d(P)=d(Q).
$$
\end{lemma}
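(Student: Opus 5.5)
The plan is to prove the first assertion by contradiction, using the characterization of the Frattini subgroup as the intersection of all maximal subgroups. Since $Z$ is contained in $\Frat(P)$ exactly when $Z$ lies in every maximal subgroup, we suppose $Z\not\leq \Frat(P)$ and aim for a contradiction. In that case there is a maximal subgroup $M$ of $P$ with $Z\not\leq M$, so $ZM$ properly contains $M$ and maximality gives $P=ZM$.

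The central step is to see that $M$ is normal with abelian quotient. Because $Z$ is central, conjugation by $zm$ acts on $M$ as conjugation by $m$, so $M$ is normalized by $ZM=P$ and hence $M\trianglelefteq P$. Then
$$P/M=ZM/M\cong Z/(Z\cap M)$$
is abelian. It is nontrivial because $M$ is proper. Since $P$ is perfect, every abelian quotient of $P$ is trivial, and this contradiction shows $Z\leq\Frat(P)$. This argument uses nothing beyond maximality and centrality, and it works equally well for $P=1$.

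For the consequence, let $\varphi:P\twoheadrightarrow Q$ have central kernel $Z$. Since $Q$ is a quotient of $P$, we have $d(Q)\leq d(P)$. For the reverse inequality, take a generating $d(Q)$-tuple of $Q$ and choose lifts $x_1,\ldots,x_{d(Q)}\in P$. Then $\langle x_1,\ldots,x_{d(Q)}\rangle Z=P$, and since $Z\leq\Frat(P)$ by the first part, the non-generator property of the Frattini subgroup gives $\langle x_1,\ldots,x_{d(Q)}\rangle=P$. Hence $d(P)\leq d(Q)$.

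The only point needing care is the normality of $M$, which is where centrality of $Z$ enters in an essential way; the remaining steps are standard.
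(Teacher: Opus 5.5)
Your proposal is correct and follows essentially the same route as the paper: a maximal subgroup $M$ missing $Z$ would satisfy $P=MZ$, be normal by centrality of $Z$, and give a nontrivial abelian quotient $P/M$, contradicting perfectness. The rank equality then follows, as in the paper, from the non-generator property of $\Frat(P)\supseteq Z$.
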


\begin{proof}
Suppose that a maximal subgroup $M<P$ does not contain $Z$. Then $P=MZ$. Since $Z$ is central, $M$ is normal in $P$, so $P/M$ is a nontrivial abelian quotient of $P$, contrary to $P=[P,P]$. Thus every maximal subgroup contains $Z$, and hence $Z\leq\Frat(P)$.

For the last assertion, a subset of $P$ generates $P$ if and only if its image generates $P/Z$ whenever $Z\leq\Frat(P)$. Applying this to the central kernel of $P\twoheadrightarrow Q$ gives $d(P)=d(Q)$.
\end{proof}

\begin{lemma}\label{lem:amalgamation}
Let $d\geq1$ be an integer, let $A$, $B$, and $Q$ be finite perfect groups, and let
$$
\alpha:A\twoheadrightarrow Q,\qquad\beta:B\twoheadrightarrow Q
$$
be epimorphisms. Suppose that $N=\ker\beta$ is a minimal normal subgroup of $B$. If $d(A),d(B)\leq d$, then there is a finite perfect group $C$ with $d(C)\leq d$ and epimorphisms $\pi_A:C\twoheadrightarrow A$ and $\pi_B:C\twoheadrightarrow B$ such that $\alpha\circ\pi_A=\beta\circ\pi_B$.
\end{lemma}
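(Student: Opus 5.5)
We work with the fibre product
$$
P:=A\times_Q B=\{(a,b)\in A\times B:\alpha(a)=\beta(b)\},
$$
which surjects onto $A$ and onto $B$ compatibly with $\alpha,\beta$. The kernel of $P\twoheadrightarrow A$ is $1\times N$, a copy of $N$. The argument has two parts. First, we either take $C=P$ itself, or a suitable subgroup or quotient of $P$. Second, we show $d(C)\leq d$ using a split according to the type of $N$. Perfectness of the resulting $C$ will be checked separately, since $P$ need not be perfect.

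\emph{Case 1: $N$ is central in $B$.} Since $B$ is perfect, Lemma \ref{lem:central-frattini} gives $N\leq\Frat(B)$. Consider the projection $P\to B$ and a generating $d$-tuple of $A$. We use that $\alpha(a_i)$ lifts to $b_i\in B$ with $\langle b_1,\ldots,b_d\rangle N=B$. Frattini then forces $\langle b_i\rangle=B$. Put $C=\langle (a_i,b_i)\rangle\leq P$. By construction $C$ projects onto $A$, and its projection to $B$ is $\langle b_i\rangle=B$. Hence $C$ is $d$-generated and surjects onto both factors with the compatibility. To get perfectness, replace $C$ by its last term of the derived series $C^{(\infty)}$. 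This still maps onto the perfect groups $A$ and $B$, since the images of $C^{(k)}$ are $A^{(k)}=A$ and $B^{(k)}=B$. However, $C^{(\infty)}$ might need more generators. To avoid this, note that $C/C'$ is a quotient of $C$ mapping trivially to $A$, so $C'$ still surjects onto $A$. Moreover $C\cap(1\times N)$ is central and contains everything killed. A cleaner route is therefore to observe that $C'\cdot(C\cap (1\times N))=C$. Since $C\cap(1\times N)$ is central in $C$, hence in $\Frat$-like position, we get $C'=C$ once we check that $C\cap(1\times N)\leq\Frat(C)$. This holds because $C'$ already surjects onto $A$ and $C=C'\cdot(C\cap(1\times N))$ with the second factor central, so $C/C'$ is central and any maximal subgroup containing $C'$ is normal of prime index; a little care is needed here.

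\emph{Case 2: $N$ is noncentral.} Then either $N$ is abelian with nontrivial irreducible action, or $N$ is nonabelian. Here we take $C=P$. In this case $P$ is perfect: $[P,P]$ maps onto $A$ and contains $[1\times N,P]=1\times[N,B]=1\times N$, by minimality and noncentrality. For generation, take a generating $d$-tuple $(a_i)$ of $A\cong P/(1\times N)$. We want a lift to $P$ that generates. Gaschütz's lemma says that the number of lifts of a generating tuple of $P/(1\times N)$ which generate $P$ is independent of the tuple. So it suffices to produce a single generating $d$-tuple of $P$ when $P$ is $d$-generated. We cannot use that directly, so instead we count. We compare with $B$: the tuple $(\alpha(a_i))$ generates $Q$, and $B=$ ``$N$ over $Q$'' is $d$-generated, so by Gaschütz some lift to $B$ generates. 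The main obstacle is that generating lifts to $B$ do not automatically give generating lifts to $P$. The lifts $(a_i,b_i)$ with $\langle b_i\rangle=B$ generate a subgroup $C_0$ projecting onto $A$ and $B$. The intersection $C_0\cap(1\times N)$ is normal in $C_0$ and is normalized by the projection $B$, so it is $1$ or $N$. If it is $N$, then $C_0=P$ and we are done. If it is trivial for every choice of generating lift $(b_i)$, then $C_0$ is the graph of an isomorphism identifying $A$ with $B$ over $Q$; in that case we simply take $C=A$ (or $B$).

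This dichotomy, either some lift gives all of $P$ or $P$ contains the graph of an isomorphism $A\cong B$ over $Q$, should also replace the awkward perfectness step in Case 1. In Case 1, $C_0\cap(1\times N)$ is a subgroup of $N$ normalized by $B$; we would apply the same dichotomy to minimal $N$ and then use $C=P$ when $P$ is perfect. Otherwise we pass to $P'$, which is $d$-generated by Lemma \ref{lem:central-frattini} applied over $A$.
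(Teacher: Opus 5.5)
Your noncentral case is essentially the paper's argument: Gasch\"utz lifting, the dichotomy $C_0\cap(1\times N)\in\{1,1\times N\}$, and perfectness of $P$ from $[N,B]=N$. One small correction there: if the intersection is trivial, $C_0$ is the graph of an \emph{epimorphism} $\phi:A\twoheadrightarrow B$ with $\beta\circ\phi=\alpha$, not necessarily an isomorphism. So $C=A$ with $\pi_A=\mathrm{id}$, $\pi_B=\phi$ works, but ``or $B$'' does not in general.

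The genuine gap is in the central case. You assert that $C=\langle(a_i,b_i)\rangle$ is perfect, i.e.\ that $C\cap(1\times N)\leq\Frat(C)$. This is false, and no extra care will rescue it. Take $d=2$, $A=B=\mathrm{SL}_2(5)$, $Q=A_5$, $N=\{\pm1\}$. The map $(a,b)\mapsto(a,a^{-1}b)$ identifies $P$ with $\mathrm{SL}_2(5)\times C_2$. The admissible choice $b_1=-a_1$, $b_2=a_2$ makes $C$ correspond to $\langle(a_1,-1),(a_2,1)\rangle$. This subgroup is not the graph of a homomorphism $\mathrm{SL}_2(5)\to C_2$, since the only such homomorphism is trivial. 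Hence it is all of $\mathrm{SL}_2(5)\times C_2$, which is not perfect.

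What your decomposition $C=C'Z$, with $Z=C\cap(1\times N)$ central, actually yields is $C'=[C'Z,C'Z]=[C',C']$. So the derived subgroup is already perfect, and your worry that $C^{(\infty)}$ might need more generators is unfounded. Indeed $C'$ maps onto $A'=A$ and $B'=B$, its kernel over $A$ is central, and Lemma \ref{lem:central-frattini} gives $d(C')=d(A)\leq d$.

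Your closing suggestion to pass to $P'$ is exactly the paper's central case, applied directly to the fibre product with no lifted tuple or dichotomy needed. However, you invoke Lemma \ref{lem:central-frattini} there without checking its hypothesis that $P'$ is perfect, and that is the one substantive verification in this case. Since $P/(1\times N)\cong A$ is perfect, $P=P'(1\times N)$, and centrality of $1\times N$ then gives $P'=[P',P']$. With this and the surjections $P'\twoheadrightarrow A$, $P'\twoheadrightarrow B$, the central case is complete.
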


\begin{proof}
Form the fibre product
$$
E=A\times_QB=\{(a,b)\in A\times B: \alpha(a)=\beta(b)\}.
$$
We distinguish the central and noncentral cases.

Suppose first that $N\leq Z(B)$. We identify $N$ with $1\times N\leq E$; it is central in $E$, and $E/N\cong A$. Since $A$ is perfect, the image of $E'$ in $E/N$ is all of $E/N$, so
$$
E=E'N.
$$
The centrality of $N$ now gives
$$
E'=[E,E]=[E'N,E'N]=[E',E']=E''.
$$
Thus $C:=E'$ is perfect. Both coordinate projections of $E$ are surjective, and the image of a derived group under an epimorphism is the derived group of the image. Since $A$ and $B$ are perfect, the restrictions
$$
E'\twoheadrightarrow A,\qquad E'\twoheadrightarrow B
$$
are therefore surjective. The kernel of $E'\twoheadrightarrow A$ is contained in the central group $N$. Lemma \ref{lem:central-frattini} then yields
$$
d(E')=d(A)\leq d.
$$
Suppose now that $N\not\leq Z(B)$. Choose a generating $d$-tuple $(q_1,\ldots,q_d)$ of $Q$, padding with identity elements if necessary. Gasch\"utz's lifting lemma \cite{Gaschutz} gives generating tuples $(a_1,\ldots,a_d)$ of $A$ and $(b_1,\ldots,b_d)$ of $B$ such that
$$
\alpha(a_i)=q_i=\beta(b_i)\qquad(1\leq i\leq d).
$$
Put
$$
D=\langle(a_1,b_1),\ldots,(a_d,b_d)\rangle\leq E.
$$
Both coordinate projections of $D$ are surjective. Set
$$
K=D\cap(1\times N).
$$
Under the identification $1\times N\cong N$, the group $K$ is normal in $B$: it is normal in $D$, and the second projection $D\twoheadrightarrow B$ is surjective. The minimal normality of $N$ therefore gives $K=1$ or $K=N$.

If $K=1$, then $D\twoheadrightarrow A$ is an isomorphism. Hence $D$ is a perfect $d$-generated common cover of $A$ and $B$. If $K=N$, then $D=E$. Since $N$ is minimal normal and noncentral in $B$, one has
$$
[N,B]=N.
$$
The action of $E$ on $N$ is the action induced by its projection onto $B$, so $[N,E]=N$. Moreover $E/N\cong A$ is perfect. It follows that $E=E'N$ and $N\leq E'$, whence $E=E'$. Thus in this case $D=E$ is again a perfect $d$-generated common cover.
\end{proof}

The same argument applies to finitely many central extensions of a fixed perfect group.

\begin{corollary}\label{cor:central-packets}
Let $Q$ be a finite perfect group and let
$$
\beta_i:B_i\twoheadrightarrow Q\qquad(1\leq i\leq s)
$$
be finite perfect central extensions. Form their fibre product over $Q$,
$$
E=B_1\times_Q\cdots\times_QB_s.
$$
Then $E'$ is a finite perfect common cover of the $B_i$, and
$$
d(E')=d(Q).
$$
\end{corollary}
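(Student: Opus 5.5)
The plan is to mimic the central case of Lemma \ref{lem:amalgamation}, now with $s$ central kernels at once. Let $Z$ be the kernel of the natural projection $E\twoheadrightarrow Q$. Under the identification of $E$ with tuples $(b_1,\ldots,b_s)$ having a common image in $Q$, we have $Z=\ker\beta_1\times\cdots\times\ker\beta_s$. This is central in $E$, because each $\ker\beta_i$ is central in $B_i$ and the group operation is coordinatewise.

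\emph{Step 1: $E'$ is perfect.} Since $Q$ is perfect, the image of $E'$ in $E/Z\cong Q$ is all of $Q$, so $E=E'Z$. Centrality of $Z$ then gives
$$
E'=[E'Z,E'Z]=[E',E']=E''.
$$
Hence $E'$ is perfect, and it is finite because $E$ is.

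\emph{Step 2: $E'$ covers each $B_i$.} Each coordinate projection $E\to B_i$ is surjective. Given $b\in B_i$, lift $\beta_i(b)$ to each $B_j$ with $j\neq i$, which is possible since each $\beta_j$ is onto. The image of a derived subgroup under an epimorphism is the derived subgroup of the image. Since $B_i$ is perfect, the restriction $E'\twoheadrightarrow B_i$ is therefore surjective.

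\emph{Step 3: rank.} The composite $E'\twoheadrightarrow Q$ is surjective, by Step 1 or as a composite of surjections, and its kernel $E'\cap Z$ is central in $E'$. Since $E'$ is a finite perfect group, Lemma \ref{lem:central-frattini} gives $d(E')=d(Q)$.

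No step is a real obstacle. The only point needing care is checking that $Z$ is central in the fibre product and equals the kernel of $E\to Q$, and both follow directly from the coordinatewise description. Alternatively, one can argue by induction on $s$ using the central case of Lemma \ref{lem:amalgamation}, but the direct argument above avoids having to track the minimal-normal hypothesis.
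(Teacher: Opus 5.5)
Your proposal is correct and follows the paper's own proof essentially step for step. Both use the central kernel $Z$ and $E=E'Z$ to get $E'=E''$. Both then use that surjective coordinate projections carry $E'$ onto $B_i'=B_i$, and finish by applying Lemma \ref{lem:central-frattini} to $E'\twoheadrightarrow Q$, whose kernel $E'\cap Z$ is central.
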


\begin{proof}
The kernel $Z$ of $E\twoheadrightarrow Q$ is central. Since $Q$ is perfect, $E=E'Z$, and therefore
$$
E'=[E'Z,E'Z]=[E',E'].
$$
Each projection $E\twoheadrightarrow B_i$ is surjective, so it maps $E'$ onto $B_i'=B_i$. Finally, the kernel of $E'\twoheadrightarrow Q$ is central in the perfect group $E'$. Lemma \ref{lem:central-frattini} gives $d(E')=d(Q)$.
\end{proof}
\begin{remark}\label{remark: interpret}
For a monolithic primitive group $L$ and a profinite group $\Gp$, put
$$
\kappa_L(\Gp)=\sup\{t\geq1:\Gp\twoheadrightarrow L_t\},\qquad\sup\varnothing=0.
$$
By Proposition \ref{prop:exact-crown-filtration}, a finite group $G$ satisfies $d(G)\leq d$ if and only if $\kappa_L(G)<f_L(d)$ for every $L$, when $d\geq2$.

In the central case of Lemma \ref{lem:amalgamation}, the groups $C=E'$ and $A$ have the same primitive crown quotients. Indeed, $\Frat(L)=1$ for every monolithic primitive group $L$. The coordinate maps $L_t\twoheadrightarrow L$ have trivial common kernel, so $\Frat(L_t)=1$ as well. Since $\ker(C\twoheadrightarrow A)\leq\Frat(C)$, every epimorphism $C\twoheadrightarrow L_t$ factors through $A$. Thus
$$
\kappa_L(C)=\kappa_L(A)
$$
for every $L$.

In the noncentral case, the subgroup $D$ generated by the lifted $d$-tuple either maps isomorphically onto $A$ or equals $A\times_QB$. Both cases give a perfect common cover with at most $d$ generators. The crown criterion therefore bounds all its crown multiplicities by $f_L(d)-1$. Proposition \ref{prop:crowns-of-Ud} describes the multiplicities in the limit: each perfect monolithic primitive group $L$ has crown multiplicity $f_L(d)-1$.
\end{remark}

\begin{proposition}\label{prop:directedness}
Let $d\geq1$ be an integer. Any two finite perfect $d$-generated groups have a finite perfect $d$-generated common cover.
\end{proposition}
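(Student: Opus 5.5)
The plan is to induct along a chief series of the second group, using Lemma \ref{lem:amalgamation} to absorb one chief factor at a time. Let $G$ and $H$ be finite perfect groups with $d(G),d(H)\leq d$. Fix a chief series
$$
H=N_0\geq N_1\geq\cdots\geq N_k=1
$$
of $H$, and set $B_j=H/N_j$. Every quotient of a perfect group is perfect, and $d(B_j)\leq d(H)\leq d$, so each $B_j$ is a finite perfect $d$-generated group. Moreover, the kernel of $B_{j+1}\twoheadrightarrow B_j$ is $N_j/N_{j+1}$, which is a minimal normal subgroup of $B_{j+1}$ because the series is a chief series.

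We construct inductively finite perfect groups $C_j$ with $d(C_j)\leq d$, together with epimorphisms $\gamma_j:C_j\twoheadrightarrow G$ and $\beta_j:C_j\twoheadrightarrow B_j$. We also require $\beta_j$ to be compatible with the projections, in the sense that $\beta_j$ composed with $B_j\twoheadrightarrow B_{j-1}$ equals $\beta_{j-1}$ composed with a chosen epimorphism $C_j\twoheadrightarrow C_{j-1}$. In fact only surjectivity onto $G$ and onto $B_j$ is needed, so compatibility can be dropped.

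For the base case, $B_0=1$; take $C_0=G$, with $\gamma_0=\mathrm{id}$ and $\beta_0$ the trivial map.

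For the inductive step, apply Lemma \ref{lem:amalgamation} with the following data: $A=C_j$, $Q=B_j$, $\alpha=\beta_j$, $B=B_{j+1}$, and $\beta$ the projection $B_{j+1}\twoheadrightarrow B_j$. All the hypotheses hold: $A$, $B$ and $Q$ are perfect, $d(A),d(B)\leq d$, and $\ker\beta$ is minimal normal in $B$. The lemma produces a finite perfect $C_{j+1}$ with $d(C_{j+1})\leq d$ and epimorphisms $\pi_A:C_{j+1}\twoheadrightarrow C_j$ and $\pi_B:C_{j+1}\twoheadrightarrow B_{j+1}$. Set $\gamma_{j+1}=\gamma_j\circ\pi_A$ and $\beta_{j+1}=\pi_B$; both are surjective.

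At $j=k$ we have $B_k=H$, so $C_k$ is a finite perfect $d$-generated group surjecting onto both $G$ and $H$, which is the required common cover.

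There is essentially no obstacle, since all the work is in Lemma \ref{lem:amalgamation}. The only point needing care is that the lemma requires $\ker\beta$ to be minimal normal in $B$, not merely a chief factor of some larger group. This is exactly why we pass to the quotients $B_{j+1}=H/N_{j+1}$ rather than working with $H$ directly. The degenerate case $d=1$ is trivial, because a perfect cyclic group is trivial, but the argument above covers it uniformly anyway.
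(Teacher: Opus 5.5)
Your proof is correct and follows the paper's argument: you fix a chief series of one group and start from the other group as the initial cover of the trivial quotient. At each step you apply Lemma \ref{lem:amalgamation} to the current cover $C_j\twoheadrightarrow H/N_j$ and the projection $H/N_{j+1}\twoheadrightarrow H/N_j$, whose kernel is minimal normal, exactly as in the paper, and you are right that no compatibility beyond surjectivity is needed.
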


\begin{proof}
Let $A$ and $B$ be finite perfect groups with $d(A),d(B)\leq d$. Choose a chief series
$$
1=N_0<N_1<\cdots<N_r=B.
$$
Thus $N_{j+1}/N_j$ is a minimal normal subgroup of $B/N_j$ for $0\leq j<r$. Every quotient $B/N_j$ is perfect.

We descend from $B/N_r=1$ to $B/N_0=B$. Start with $C_r=A$, equipped with the unique epimorphism $C_r\twoheadrightarrow B/N_r$. Suppose that a finite perfect $d$-generated group $C_{j+1}$ has been constructed which maps onto $A$ and onto $B/N_{j+1}$. Apply Lemma \ref{lem:amalgamation} to
$$
C_{j+1}\twoheadrightarrow B/N_{j+1}\qquad\text{and}\qquad B/N_j\twoheadrightarrow B/N_{j+1}.
$$
It gives a finite perfect $d$-generated group $C_j$ mapping onto both $C_{j+1}$ and $B/N_j$. At the end, $C_0$ maps onto both $A$ and $B$. This proves the assertion for two groups; induction gives the assertion for any finite family.
\end{proof}

\subsection{Existence of the universal group}

\begin{proof}[Proof of Theorem \ref{thm:nikolov-B1-full}\textup{(i)} and \textup{(ii)}]
Part (i) is Proposition \ref{prop:directedness}.

For part (ii), enumerate the isomorphism classes of finite perfect $d$-generated groups as
$$
H_1,H_2,\ldots.
$$
Start with $P_1=H_1$. Using Proposition \ref{prop:directedness} recursively, construct finite perfect $d$-generated groups $P_n$ and epimorphisms
$$
P_{n+1}\twoheadrightarrow P_n,\qquad P_n\twoheadrightarrow H_n.
$$
By composition, $P_n$ maps onto every $H_i$ with $i\leq n$. Put
$$
U_d=\varprojlim_nP_n.
$$
The projection $U_d\twoheadrightarrow P_n$ is surjective, so every $H_n$ is a continuous quotient of $U_d$.

Every finite continuous quotient of $U_d$ factors through some $P_n$, since the kernels of the projections form a basis of open neighbourhoods of the identity. It is therefore perfect and $d$-generated. Proposition \ref{prop:finite-quotient-detects-generation} gives $d(U_d)\leq d$. Thus the finite continuous quotients of $U_d$ are precisely the finite perfect $d$-generated groups.

Finally, since every finite continuous quotient of $U_d$ is perfect, the abstract derived subgroup $[U_d,U_d]$ is dense in $U_d$. The derived subgroup of a topologically finitely generated profinite group is closed by the uniform commutator theorem of Nikolov and Segal \cite[Theorem 1.4]{NikolovSegalI}. Therefore
$$
U_d=[U_d,U_d],
$$
as required.
\end{proof}

\begin{proposition}\label{prop:crowns-of-Ud}
Let $d\geq2$, and let $U_d$ be a profinite group whose finite continuous quotients are precisely the finite perfect groups $H$ with $d(H)\leq d$. For every monolithic primitive finite group $L$,
$$
\kappa_L(U_d)=\begin{cases}
f_L(d)-1,&\text{if $L$ is perfect},\\
0,&\text{otherwise}.
\end{cases}
$$
If $U_d=\varprojlim_n P_n$ is an inverse limit of finite groups with surjective transition maps, then for each fixed $L$ there is an integer $n_L$ such that
$$
\kappa_L(P_n)=\kappa_L(U_d)\qquad(n\geq n_L).
$$
\end{proposition}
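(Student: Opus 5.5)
We compute $\kappa_L(U_d)$ by reducing both inequalities to statements about finite quotients. Every continuous epimorphism $U_d\twoheadrightarrow L_t$ has a finite image, which is then a finite continuous quotient of $U_d$. Conversely, every finite perfect $d$-generated group is such a quotient. Hence
$$
\kappa_L(U_d)=\sup\{t\geq1: L_t \text{ is perfect and } d(L_t)\leq d\},
$$
with the convention that $\sup\varnothing=0$.

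\emph{The nonperfect case.} Suppose $L$ is not perfect. Then $L_1=L$ is not a quotient of $U_d$, because every finite continuous quotient of $U_d$ is perfect. Coordinate projection $L_t\twoheadrightarrow L$ then rules out every $L_t$ as a quotient, so $\kappa_L(U_d)=0$.

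\emph{The perfect case, upper bound.} Suppose $L$ is perfect. First, the definition of $f_L(d)$ in \eqref{eq:def-fLm}, extended as in Section \ref{sec:generator-rank-asymptotics} by $f_L(d)=1$ when $d(L)>d$, says exactly that $d(L_t)\leq d$ if and only if $t<f_L(d)$. This uses the monotonicity of $t\mapsto d(L_t)$, which comes from the coordinate projections. Consequently any quotient $L_t$ of $U_d$ satisfies $t\leq f_L(d)-1$.

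\emph{The perfect case, lower bound.} It remains to show that $L_t$ is perfect for $1\leq t<f_L(d)$. We have $L_t/A^t\cong L/A$, which is perfect because it is a quotient of $L$. Since $L$ is perfect, $A$ is nonabelian: for abelian $A$ we would have $L=V\rtimes K$ with $K\neq1$ perfect, and then $[L,L]\geq[V,K]=V$ by irreducibility and nontriviality. So the abelian case needs a separate check. In that case $[L_t,L_t]$ maps onto $K$ and contains $[V^t,K]$. Since $V$ is a nontrivial irreducible $K$-module, $[V,K]=V$, so $[V^t,K]=V^t$ and $L_t$ is perfect. The nonabelian case is easier. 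The diagonal-type argument gives $[A^t,L_t]\geq$ each coordinate factor, because $[A,L]=A$ in each coordinate with the other coordinates chosen compatibly. Together with the perfectness of $L_t/A^t$, this shows $L_t$ is perfect. Note also that $L$ perfect and $d(L)>d$ give $f_L(d)-1=0$, which is consistent with the formula. This proves the formula for $\kappa_L(U_d)$.

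\emph{Stabilization along the inverse limit.} Write $U_d=\varprojlim P_n$, and let $k=\kappa_L(U_d)$, which is finite. Each $P_n$ is a quotient of $U_d$, so $\kappa_L(P_n)\leq k$. In the other direction, if $k\geq1$, fix an epimorphism $U_d\twoheadrightarrow L_k$. Its kernel is open, so it contains the kernel of $U_d\to P_n$ for all sufficiently large $n$; this is where we use that the kernels of the projections form a neighbourhood basis of the identity. Thus the map factors through $P_n$, giving $\kappa_L(P_n)\geq k$ for $n\geq n_L$. When $k=0$ there is nothing to prove, taking $n_L=1$.

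The main obstacle is the perfectness of $L_t$ in the abelian-type case. It hinges on $[V,K]=V$, which holds because $V$ is a nontrivial irreducible module for the perfect group $K$.
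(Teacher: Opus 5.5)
Your proof is correct and is essentially the paper's argument: you reduce to the facts that $L_t$ is perfect exactly when $L$ is, that $d(L_t)\le d\iff t<f_L(d)$, and that an epimorphism $U_d\twoheadrightarrow L_k$ factors through some $P_n$; the only difference is that you get $[A,L]=A$ by a case split (abelian $V$ acted on nontrivially by a perfect $K\neq1$, or perfect nonabelian $A$), whereas the paper gets it uniformly from Lemma~\ref{lem:central-frattini} and primitivity. Do strike the clause ``Since $L$ is perfect, $A$ is nonabelian'': it is false (e.g.\ $\F_5^2\rtimes\mathrm{SL}_2(5)$ is perfect and monolithic primitive with abelian socle), although your argument does not use it, since you go on to treat the abelian case separately and correctly.
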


\begin{proof}
Put $A=\soc(L)$. We first show that $L_t$ is perfect if and only if $L$ is perfect. One implication follows from the projection $L_t\twoheadrightarrow L$. Conversely, suppose that $L$ is perfect. The subgroup $A$ cannot be central: Lemma \ref{lem:central-frattini} would give $A\leq\Frat(L)$, contrary to primitivity. Minimal normality therefore gives $[A,L]=A$. The projection of $L_t$ onto each coordinate is surjective, so the commutators with each coordinate subgroup of $A^t$ generate that subgroup. Thus $A^t\leq[L_t,L_t]$. Since $L_t/A^t\cong L/A$ is perfect, so is $L_t$.

If $L$ is not perfect, no $L_t$ is a quotient of $U_d$, and hence $\kappa_L(U_d)=0$. If $L$ is perfect, the defining property of $U_d$ gives
$$
U_d\twoheadrightarrow L_t\quad\Longleftrightarrow\quad d(L_t)\leq d
\quad\Longleftrightarrow\quad t<f_L(d).
$$
This proves the formula, including the case $f_L(d)=1$.

Put $k=\kappa_L(U_d)$. Each $P_n$ is a quotient of $U_d$, so $\kappa_L(P_n)\leq k$. If $k=0$, equality holds for every $n$. If $k>0$, an epimorphism $U_d\twoheadrightarrow L_k$ factors through some $P_{n_L}$, and hence through every $P_n$ with $n\geq n_L$. Thus $\kappa_L(P_n)\geq k$ for those $n$, proving the assertion.
\end{proof}

\subsection{Uniqueness}\label{subsec:uniqueness}
A topologically finitely generated profinite group is determined by its finite continuous quotients. We include a proof of this profinite form of the theorem of Dixon, Formanek, Poland, and Ribes \cite{DFPR}.

\begin{lemma}\label{lem:hopfian}
Let $\Gp$ be a topologically finitely generated profinite group. Then every continuous epimorphism $\varphi:\Gp\twoheadrightarrow\Gp$ is an isomorphism.
\end{lemma}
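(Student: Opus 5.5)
The plan is the standard counting argument for Hopficity of finitely generated profinite groups. Let $\varphi:\Gp\twoheadrightarrow\Gp$ be a continuous epimorphism. We show that $\ker\varphi$ lies in every open normal subgroup of $\Gp$. Since $\Gp$ is profinite, the intersection of its open normal subgroups is trivial, so $\varphi$ is then injective. A continuous bijection from a compact space to a Hausdorff space is a homeomorphism, so $\varphi$ is an isomorphism of topological groups.

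The key input is finiteness. Fix $n\geq1$ and put $d=d(\Gp)<\infty$. Let $\mathcal U_n$ be the set of open normal subgroups $U\trianglelefteq\Gp$ with $[\Gp:U]\leq n$. We claim $\mathcal U_n$ is finite. Each such $U$ is the kernel of a continuous homomorphism $\Gp\to H$ with $H$ a finite group of order at most $n$. Such a homomorphism is determined by the images of a fixed topological generating $d$-tuple, by continuity and density of the abstract subgroup generated. There are finitely many groups of order at most $n$ up to isomorphism, and for each $H$ at most $|H|^d$ homomorphisms. Hence $\mathcal U_n$ is finite.

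Next, consider the map $\Phi:\mathcal U_n\to\mathcal U_n$ given by $U\mapsto\varphi^{-1}(U)$. It is well defined: $\varphi^{-1}(U)$ is open by continuity and normal in $\Gp$. Surjectivity of $\varphi$ gives $\Gp/\varphi^{-1}(U)\cong\Gp/U$, so the index is preserved. It is also injective, since surjectivity of $\varphi$ yields $\varphi(\varphi^{-1}(U))=U$. An injective self-map of a finite set is bijective, so every $U\in\mathcal U_n$ has the form $\varphi^{-1}(U')$ with $U'\in\mathcal U_n$. Therefore
$$
\ker\varphi=\varphi^{-1}(1)\leq\varphi^{-1}(U')=U.
$$
Letting $n$ vary covers all open normal subgroups, so $\ker\varphi=1$.

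There is no real obstacle. The only point needing care is the finiteness of $\mathcal U_n$, which uses the finite generation of $\Gp$ and the fact that continuous homomorphisms to finite groups are determined on a topological generating set. The rest is formal, together with the compact–Hausdorff remark for the topological statement.
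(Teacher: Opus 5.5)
Your proof is correct and follows essentially the same argument as the paper. You use the finiteness of the set of open subgroups of bounded index, the fact that $U\mapsto\varphi^{-1}(U)$ is injective and hence bijective, and the compact--Hausdorff remark; the only cosmetic difference is that you take open normal subgroups of index at most $n$, counted via homomorphisms to groups of order at most $n$, while the paper takes all open subgroups of index $n$, counted as point stabilizers of actions on $\{1,\ldots,n\}$.
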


\begin{proof}
Fix $n\geq1$. A continuous action of $\Gp$ on $\{1,\ldots,n\}$ is determined by the images of a finite topological generating set. Hence there are finitely many such actions. Every open subgroup of index $n$ is a point stabilizer of one of them, so the set $\mathcal U_n$ of these subgroups is finite. Since $\varphi$ is surjective, the map $U\mapsto\varphi^{-1}(U)$ is an injection from $\mathcal U_n$ to itself and hence a bijection. Every open subgroup of index $n$ therefore contains $\ker\varphi$. Since the open subgroups intersect in the identity, $\ker\varphi=1$. Recall that for compact Hausdorff groups it suffices to show that a continuous homomorphism $\varphi$ is bijective to grant that it is an isomorphism of topological groups. 
\end{proof}

\begin{proposition}\label{prop:rigidity}
Let $\Gp$ and $\mathcal H$ be topologically finitely generated profinite groups such that a finite group is a continuous quotient of $\Gp$ if and only if it is a continuous quotient of $\mathcal H$. Then $\Gp\cong\mathcal H$ as topological groups.
\end{proposition}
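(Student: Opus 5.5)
The plan is to build a continuous epimorphism $\Gp\twoheadrightarrow\mathcal H$ and a continuous epimorphism $\mathcal H\twoheadrightarrow\Gp$. Composing them gives a continuous epimorphism $\Gp\twoheadrightarrow\Gp$, which is injective by Lemma \ref{lem:hopfian}. Hence the first map is injective, so it is a continuous bijection between compact Hausdorff groups, and therefore a topological isomorphism. By symmetry it suffices to construct the epimorphism $\Gp\twoheadrightarrow\mathcal H$.

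The construction is a compactness argument in the spirit of Lemma \ref{lem:compactness-relations}. Fix $n\geq1$ and let $\mathcal V_n$ be the intersection of all open normal subgroups of $\mathcal H$ of index at most $n$. As in the proof of Lemma \ref{lem:hopfian}, finite generation gives only finitely many open subgroups of each index. So $\mathcal V_n$ is open and normal, the quotients $\mathcal H_n:=\mathcal H/\mathcal V_n$ are finite, the $\mathcal V_n$ decrease, and $\bigcap_n\mathcal V_n=1$. Hence $\mathcal H=\varprojlim_n\mathcal H_n$.

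For each $n$, set $X_n:=\Sur(\Gp,\mathcal H_n)$. Each $\mathcal H_n$ is a finite quotient of $\mathcal H$, hence of $\Gp$, so $X_n\neq\varnothing$. Each $X_n$ is finite, because a continuous homomorphism is determined by the images of a finite topological generating set of $\Gp$. Composition with $\mathcal H_{n+1}\twoheadrightarrow\mathcal H_n$ maps $X_{n+1}$ into $X_n$, since it preserves surjectivity. An inverse limit of nonempty finite sets is nonempty, as already used in Proposition \ref{prop:finite-quotient-detects-generation}. So there is a compatible family $(\varphi_n)_n$, which defines a continuous homomorphism $\varphi:\Gp\to\mathcal H$.

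It remains to see that $\varphi$ is surjective. Its image is compact, hence closed, and it maps onto every $\mathcal H_n$. A closed subgroup mapping onto each $\mathcal H/\mathcal V_n$ is dense, so it is all of $\mathcal H$.

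The only delicate point is that the inverse system must run over the specific characteristic-type quotients $\mathcal H_n$. Using all open normal subgroups would not give compatible choices as directly. Finiteness of $X_n$ is exactly where the finite generation of $\Gp$ enters, and it is what makes the compactness step go through. Applying the same argument with the roles of $\Gp$ and $\mathcal H$ exchanged, and combining with Lemma \ref{lem:hopfian} as in the first paragraph, finishes the proof.
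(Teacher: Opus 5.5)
Your proof is correct and follows essentially the paper's argument: compactness gives continuous epimorphisms in both directions, and Lemma \ref{lem:hopfian} then forces them to be isomorphisms. The only difference is that you index the inverse system by the countable cofinal chain $\mathcal V_n$, whereas the paper uses all open normal subgroups of $\mathcal H$ directly; contrary to your closing remark, that works just as well, since an inverse limit of nonempty finite sets over any directed set is nonempty.
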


\begin{proof}
For an open normal subgroup $M\trianglelefteq\mathcal H$, put
$$
S_M=\Sur(\Gp,\mathcal H/M).
$$
Each $S_M$ is nonempty by hypothesis. It is finite because a continuous homomorphism to a finite group is determined by the images of a finite topological generating set. Postcomposition with the projections $\mathcal H/M\twoheadrightarrow\mathcal H/M'$ for $M\leq M'$ makes $(S_M)_M$ an inverse system of nonempty finite sets over the directed set of open normal subgroups of $\mathcal H$. Its inverse limit is nonempty by compactness. An element is a compatible family of epimorphisms, that is, a continuous homomorphism
$$
\varphi:\Gp\longrightarrow\varprojlim_M\mathcal H/M=\mathcal H
$$
whose image is dense. The image is compact and hence closed, so $\varphi$ is surjective. The same argument gives a continuous epimorphism $\psi:\mathcal H\twoheadrightarrow\Gp$. The composite $\psi\circ\varphi:\Gp\twoheadrightarrow\Gp$ is an isomorphism by Lemma \ref{lem:hopfian}. Thus $\varphi$ is injective and hence an isomorphism.
\end{proof}

\begin{proof}[Proof of Theorem \ref{thm:nikolov-B1-full}\textup{(iii)}]
Let $\mathcal V$ be as in (iii). Every finite continuous quotient $Q$ of $\mathcal V$ is $d$-generated. Since $\mathcal V=\overline{[\mathcal V,\mathcal V]}$, its image in $Q$ is $\overline{[Q,Q]}=[Q,Q]$, so $Q$ is perfect. By hypothesis the finite continuous quotients of $\mathcal V$ also include every finite perfect $d$-generated group. Hence $\mathcal V$ and $U_d$ have the same finite continuous quotients, and Proposition \ref{prop:rigidity} gives $\mathcal V\cong U_d$.
\end{proof}

\subsection{Projectivity and presentations}\label{subsec:projectivity}
A profinite group $P$ is projective if every continuous homomorphism $P\to B$ lifts through every continuous epimorphism $A\twoheadrightarrow B$ of profinite groups.

\begin{proposition}\label{prop:Ud-projective}
Let $d\geq2$. Every continuous epimorphism $\wideF_d\twoheadrightarrow U_d$ has a continuous section. In particular, $U_d$ is projective.
\end{proposition}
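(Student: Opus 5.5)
Let $\pi:\wideF_d\twoheadrightarrow U_d$ be a continuous epimorphism with kernel $R$. The plan is to build a continuous homomorphism $\sigma:U_d\to\wideF_d$ with $\pi\circ\sigma=\mathrm{id}$. We obtain it as an inverse limit of finite-level sections and close with a compactness argument of the same type as in Lemma \ref{lem:compactness-relations} and Proposition \ref{prop:rigidity}.

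\textbf{Finite-level sections.} For each open normal $U\trianglelefteq\wideF_d$, put $B_U=\wideF_d/U$ and consider the induced epimorphism $B_U\twoheadrightarrow U_d/\pi(U)=:Q_U$. The key claim is that, after refining $U$ suitably, the composite $U_d\twoheadrightarrow Q_U$ lifts to a homomorphism $U_d\to B_U$ compatible with the projections. The natural candidate source of lifts is the following.
\begin{itemize}
\item Let $B_U^{\mathrm{perf}}$ be the perfect core (last term of the derived series) of $B_U$. It still maps onto $Q_U$, since $Q_U$ is perfect.
\item Apply Gasch\"utz's lemma to $B_U\twoheadrightarrow Q_U$ to find generating $d$-tuples of the subgroup generated by lifts of the images of the standard generators of $\wideF_d$.
\item Use the universality of $U_d$: any finite perfect $d$-generated group $C$ is a quotient of $U_d$. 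Moreover, by the argument of Proposition \ref{prop:rigidity}, any epimorphism $C\twoheadrightarrow Q_U$ with $C$ perfect and $d$-generated can be lifted to an epimorphism $U_d\twoheadrightarrow C$ over the given map $U_d\twoheadrightarrow Q_U$.
\end{itemize}
For that last lifting, the approach is to replace $C$ by a common cover of $C$ and a large enough finite quotient $P_n$ of $U_d$ over $Q_U$, built exactly as in Lemma \ref{lem:amalgamation} and Proposition \ref{prop:directedness}. The common cover is perfect and $d$-generated, hence is itself a quotient of $U_d$. A Gasch\"utz-type counting step then shows that the set of epimorphisms $U_d\twoheadrightarrow C$ over $Q_U$ is nonempty.

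\textbf{Choosing the perfect subgroup.} Concretely, take $C$ to be a perfect $d$-generated subgroup of $B_U$ mapping onto $Q_U$. A natural choice is the perfect core of the subgroup $\langle x_1U,\dots,x_dU\rangle=B_U$; I would need a $d$-generated perfect subgroup surjecting to $Q_U$. I would obtain it by taking the last term $B_U^{(\infty)}$ of the derived series, which surjects onto $Q_U$. The difficulty is that $d(B_U^{(\infty)})$ may exceed $d$. To get around this, I would rather lift a generating $d$-tuple of $P_n$ along $\wideF_d\to P_n$, with $U$ chosen so that $\wideF_d\twoheadrightarrow P_n$ factors through $B_U$. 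I would then take the perfect core of the subgroup these lifts generate, or show directly that a suitable sub-tuple generates a perfect subgroup.

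\textbf{Compactness.} Once each finite set $\Sigma_U=\{\text{homs } U_d\to B_U \text{ lifting } U_d\to Q_U\}$ is known to be nonempty, the rest is standard. These sets are finite, because $U_d$ is finitely generated, and they form an inverse system. Its inverse limit gives $\sigma:U_d\to\wideF_d$ with $\pi\sigma$ compatible with every $U_d\twoheadrightarrow Q_U$, hence $\pi\sigma=\mathrm{id}$. Projectivity then follows because $U_d$ is a retract of the free profinite group $\wideF_d$, which is projective.

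\textbf{Main obstacle.} The hard step is the nonemptiness of $\Sigma_U$, that is, producing a $d$-generated perfect subgroup of $B_U$ over $Q_U$. If the perfect-core approach fails, I would try a different route. The idea is to verify the embedding-problem criterion for projectivity directly: show that every finite embedding problem $U_d\twoheadrightarrow Q$, $E\twoheadrightarrow Q$ with abelian or minimal kernel is weakly solvable. In the central case this should use Lemma \ref{lem:central-frattini} and Corollary \ref{cor:central-packets}. In the noncentral minimal case it should use the saturation of crown multiplicities $\kappa_L(U_d)=f_L(d)-1$ from Proposition \ref{prop:crowns-of-Ud}, which should force enough room in $U_d$ to solve the embedding problem.
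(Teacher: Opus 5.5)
\textbf{There is a genuine gap.} Your reduction to finite levels is sound, but it is only a reformulation. By compactness, $\Sigma_U\neq\varnothing$ for all $U$ is equivalent to $\pi$ having a section, so all the content sits in the step you leave open, and that step is never established. Two things are missing.

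First, the subgroup $C$. As you note, the perfect core may need more than $d$ generators, and the remarks about sub-tuples are not an argument. The right choice is a subgroup $C\leq B_U$ \emph{minimal} among those mapping onto $Q_U$. Lifting a generating $d$-tuple of $Q_U$ shows $d(C)\leq d$. Also $[C,C]$ maps onto $[Q_U,Q_U]=Q_U$, so $C$ is perfect.

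Second, and more seriously, you then need a homomorphism $U_d\to C$ over the \emph{prescribed} map $U_d\twoheadrightarrow Q_U$. That is solvability of an embedding problem for $U_d$, and nothing you cite provides it:
\begin{itemize}
\item The compactness argument of Proposition~\ref{prop:rigidity} yields some epimorphism $U_d\twoheadrightarrow C$, with no control over its composite to $Q_U$.
\item The common cover $D$ of $C$ and $P_n$ over $Q_U$ is a quotient of $U_d$. But nothing forces an epimorphism $U_d\twoheadrightarrow D$ to be compatible with the given $U_d\twoheadrightarrow P_n$.
\item A ``Gasch\"utz-type counting step'' is unavailable. Gasch\"utz's lemma lifts generating tuples, and tuples define homomorphisms only out of free groups, not out of $U_d$.
\end{itemize}
The property you need is in fact true, but proving it takes real extra work. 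One would rebuild $U_d$ as an inverse limit that solves every such embedding problem at a later stage, using the chief-series amalgamation. One would then transfer the property to $U_d$ via the uniqueness statement, Theorem~\ref{thm:nikolov-B1-full}(iii). Your crown-multiplicity fallback is only a hope; saturation of $\kappa_L$ does not by itself produce weak solutions with a prescribed map to $Q$.

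\textbf{What the paper does instead.} It sidesteps embedding problems by doing the minimality at the profinite level.
\begin{enumerate}
\item Take a closed subgroup $P\leq\wideF_d$ minimal among those mapping onto $U_d$. It exists by Zorn's lemma together with compactness of fibres.
\item Such a $P$ is topologically $d$-generated and satisfies $P=\overline{[P,P]}$. Since it surjects onto $U_d$, its finite quotients include every finite perfect $d$-generated group.
\item Uniqueness therefore gives an isomorphism $j:U_d\to P$.
\item The composite $\pi|_P\circ j$ is a surjective endomorphism of $U_d$, hence an isomorphism by Lemma~\ref{lem:hopfian}.
\item So $\pi|_P$ is an isomorphism, and its inverse is the section.
\end{enumerate}
The Hopfian property absorbs exactly the compatibility issue that blocks your finite-level approach.
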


\begin{proof}
Fix an epimorphism $\pi:F:=\wideF_d\twoheadrightarrow U_d$. There is a closed subgroup $P\leq F$ minimal among those mapping onto $U_d$. Indeed, the intersection of a descending chain of such subgroups still maps onto $U_d$: its intersection with each fibre of $\pi$ is nonempty by compactness. Zorn's lemma therefore applies.

Choose a generating $d$-tuple of $U_d$ and lift it to $P$. The closed subgroup generated by these lifts maps onto $U_d$, so it equals $P$ by minimality. Thus $P$ is topologically $d$-generated. The closed subgroup $\overline{[P,P]}$ also maps onto $U_d$, since $U_d$ is perfect. Minimality gives
$$
P=\overline{[P,P]}.
$$
Since $P$ maps onto $U_d$, its finite quotients include every finite perfect $d$-generated group. Theorem \ref{thm:nikolov-B1-full}(iii) gives an isomorphism $j:U_d\to P$. The composite $\pi|_P\circ j$ is a surjective endomorphism of $U_d$, hence an isomorphism by Lemma \ref{lem:hopfian}. Therefore $\pi|_P$ is an isomorphism, and its inverse gives a continuous section $s:U_d\to F$.

To prove projectivity, let $\alpha:A\twoheadrightarrow B$ and $f:U_d\to B$ be continuous. Freeness gives a lift $\ell:F\to A$ of $f\circ\pi$, by lifting the images of the free generators. Then $\ell\circ s$ lifts $f$.
\end{proof}

\begin{lemma}\label{lem:idempotent-relators}
Let $F=\wideF_d$ have free generators $x_1,\ldots,x_d$, and let $e:F\to F$ be a continuous endomorphism with $e^2=e$. Then
$$
\ker e=\overline{\langle x_1e(x_1)^{-1},\ldots,x_de(x_d)^{-1}\rangle^F}.
$$
\end{lemma}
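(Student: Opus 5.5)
Let $R=\overline{\langle x_1e(x_1)^{-1},\ldots,x_de(x_d)^{-1}\rangle^F}$. The plan is to prove the two inclusions $R\subseteq\ker e$ and $\ker e\subseteq R$ separately, using only idempotence and continuity.

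For $R\subseteq\ker e$: idempotence gives $e(x_ie(x_i)^{-1})=e(x_i)e(e(x_i))^{-1}=e(x_i)e(x_i)^{-1}=1$. So each relator lies in $\ker e$. Since $\ker e$ is a closed normal subgroup ($e$ is continuous), it contains the closed normal closure $R$.

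For $\ker e\subseteq R$: let $\rho:F\to F/R$ be the quotient map. I will compare $\rho$ with $\rho\circ e$. Both are continuous homomorphisms from the free profinite group $F$, and on each generator $\rho(x_i)=\rho(e(x_i))$ by definition of $R$. A continuous homomorphism out of $\wideF_d$ is determined by its values on the topological generators $x_1,\ldots,x_d$, since the two maps agree on the dense abstract subgroup these generate. Hence $\rho=\rho\circ e$. Now if $y\in\ker e$, then $\rho(y)=\rho(e(y))=\rho(1)=1$, so $y\in R$.

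Combining the two inclusions gives equality. The only point needing care is the uniqueness of continuous homomorphisms agreeing on generators, which is standard because the target $F/R$ is Hausdorff, so the equalizer of $\rho$ and $\rho\circ e$ is a closed subgroup containing the generators. I do not expect a serious obstacle.
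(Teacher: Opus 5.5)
Your proposal is correct and follows essentially the same route as the paper's proof: both get $R\le\ker e$ from the relators lying in the closed normal subgroup $\ker e$, and both get the reverse inclusion by showing that the quotient map $F\to F/R$ equals its composite with $e$, since the two agree on the free generators. Your explicit use of idempotence to check $e\bigl(x_ie(x_i)^{-1}\bigr)=1$ and your closed-equalizer justification simply spell out steps the paper leaves implicit.
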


\begin{proof}
Let $R$ be the closed normal subgroup on the right. Each of its stated generators lies in $\ker e$, so $R\leq\ker e$. If $q:F\twoheadrightarrow F/R$ is the quotient map, then $q=q\circ e$ on the free generators, and hence on $F$. Thus $q$ kills $\ker e$, proving the reverse inclusion.
\end{proof}

\begin{proposition}\label{prop:balanced-presentation}
Let $d\geq2$. Then $d(U_d)=d$, and $U_d$ admits a profinite presentation with $d$ generators and $d$ relators. Every profinite presentation of $U_d$ has at least $d$ relators.
\end{proposition}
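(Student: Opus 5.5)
The plan has three steps: establish $d(U_d)=d$, obtain a $d$-relator presentation from the section in Proposition \ref{prop:Ud-projective}, and bound the number of relators below using the Schur multiplier.

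\textbf{Step 1: $d(U_d)=d$.} The upper bound $d(U_d)\leq d$ is Theorem \ref{thm:nikolov-B1-full}(ii). For the lower bound it suffices to exhibit a finite perfect group $H$ with $d(H)=d$. Take a nonabelian simple $S$ and consider $S^t$. Its rank $d(S^t)$ is nondecreasing and unbounded in $t$ (Section \ref{sec:crowns}), and it rises by at most one at each step. Indeed, the crown formula \eqref{eq:simple-fLm} shows $d(S^t)\leq d(S^{t-1})+1$, or one can simply use $d(S^t)\leq d(S^{t-1})+d(S)$ together with $d(S)=2$ (classification). Hence some $t$ gives $d(S^t)=d$ exactly, and $S^t$ is perfect. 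Since $S^t$ is a quotient of $U_d$, we get $d(U_d)\geq d$.

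\textbf{Step 2: a presentation with $d$ relators.} Fix an epimorphism $\pi:F=\wideF_d\twoheadrightarrow U_d$. Proposition \ref{prop:Ud-projective} supplies a continuous section $s$. Put $e=s\circ\pi$, which satisfies $e^2=e$ and $\ker e=\ker\pi$. Lemma \ref{lem:idempotent-relators} then says $\ker\pi$ is the closed normal closure of the $d$ elements $x_ie(x_i)^{-1}$. This gives $U_d\cong F/\overline{\langle x_1e(x_1)^{-1},\ldots,x_de(x_d)^{-1}\rangle^F}$.

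\textbf{Step 3: at least $d$ relators.} Suppose $U_d=\wideF_n/R$ with $R$ normally generated by $r$ elements; we may take $n$ finite, as in the Sawin--Wood argument. We want $r\geq d$. Since $U_d$ is perfect, the abelianization argument used for Theorem \ref{thm:SW-finite-presentation} gives $r\geq n\geq d(U_d)=d$. In more detail, $\wideF_n^{ab}/\langle\bar y_i\rangle=U_d^{ab}=0$, so the images $\bar y_i$ span $\widehat{\Z}^n$ and hence $r\geq n$. The inequality $n\geq d$ follows from Step 1.

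The main obstacle is Step 1: producing a finite perfect group of rank exactly $d$. The concern is making sure the rank of $S^t$ does not jump past $d$. The bound $d(S^t)\leq d(S^{t-1})+1$ should follow from the Dalla Volta--Lucchini description (a generating tuple of $S^{t-1}$ together with one extra generator suffices to cover the new factor). Alternatively one can avoid exactness altogether: it is enough to have a perfect quotient of rank at least $d$, since $d(U_d)\leq d$ already, so taking $t=f_S(d-1)$ with $d-1\geq2$ gives $d(S^t)>d-1$. This case needs $d\geq3$, and for $d=2$ any nonabelian simple group works.
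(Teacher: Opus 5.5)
Your Steps 2 and 3 are the paper's argument. Step 2 uses the section from Proposition \ref{prop:Ud-projective} together with Lemma \ref{lem:idempotent-relators}. Step 3 abelianizes a finite presentation; the paper works with $\F_p^n$ rather than $\widehat{\Z}^n$, which makes no difference, and disposes of infinitely many generators because a perfect group has no nontrivial elementary abelian quotient. Step 1 also follows the paper's route: the paper takes $S=A_5$ and $t=f_{A_5}(d-1)$.

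The gap is the one point of Step 1 that actually needs proof: $S^t$ must be $d$-generated, not merely of rank $>d-1$. The finite quotients of $U_d$ are exactly the perfect groups of rank at most $d$, so a perfect group of rank $>d$ is not a quotient of $U_d$. This is why your ``avoid exactness altogether'' alternative fails. Taking $t=f_S(d-1)$ gives $d(S^t)>d-1$, but nothing you wrote shows that $S^t$ is a quotient of $U_d$. That would require $d(S^t)\leq d$, i.e. $f_S(d-1)<f_S(d)$, which is exactly the exactness you set out to avoid. Your other fallback, $d(S^t)\leq d(S^{t-1})+d(S)$ with $d(S)=2$, bounds each jump only by $2$. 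It therefore does not exclude the rank passing from $d-1$ to $d+1$.

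Your main-line claim that \eqref{eq:simple-fLm} yields $d(S^t)\leq d(S^{t-1})+1$ is true, but you do not give the reason, and that reason is the whole content of the step. For $m\geq2$, appending an arbitrary element to a generating $m$-tuple gives $|\Gen_{m+1}(S)|\geq|S|\,|\Gen_m(S)|>|\Gen_m(S)|$, using $\Gen_m(S)\neq\varnothing$. By \eqref{eq:simple-fLm}, $m\mapsto f_S(m)$ is then strictly increasing for $m\geq2$. Hence for $d\geq3$ the choice $t=f_S(d-1)$ satisfies $t<f_S(d)$, that is, $d-1<d(S^t)\leq d$. For $d=2$ take $S$ itself; $d(S)=2$ is elementary for $S=A_5$ and needs the classification in general. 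With this line added, your proof is complete and coincides with the paper's.
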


\begin{proof}
We already know that $d(U_d)\leq d$. For the reverse inequality, it suffices to find a finite perfect quotient of rank $d$. For $d=2$, take $A_5$. For $d\geq3$, put $t=f_{A_5}(d-1)$. Appending an arbitrary element of $A_5$ to a generating $m$-tuple gives
$$
|\Gen_{m+1}(A_5)|\geq60|\Gen_m(A_5)|>|\Gen_m(A_5)|\qquad(m\geq2).
$$
By \eqref{eq:simple-fLm}, this implies $f_{A_5}(d-1)<f_{A_5}(d)$. Hence
$$
d-1<d(A_5^t)\leq d.
$$
The finite perfect group $A_5^t$ is therefore a quotient of $U_d$ of rank $d$.

Choose an epimorphism $\pi:F:=\wideF_d\twoheadrightarrow U_d$. Proposition \ref{prop:Ud-projective} gives a section $s:U_d\to F$. Then $e=s\circ\pi$ is idempotent and $\ker e=\ker\pi$. Lemma \ref{lem:idempotent-relators} gives the required $d$ relators.

For minimality, it suffices to consider finitely many relators. Suppose first that the presentation has $n<\infty$ generators and relators $y_1,\ldots,y_k\in\wideF_n$. For any prime $p$, let $\bar y_i$ be their images in $\F_p^n$. Since $U_d$ is perfect, passing to the maximal elementary abelian $p$-quotient gives
$$
0\cong\F_p^n/\langle\bar y_1,\ldots,\bar y_k\rangle_{\F_p},
$$
so $k\geq n\geq d(U_d)=d$. If the free source has infinitely many generators, it maps onto $C_p^{k+1}$. The images of the $k$ relators cannot span this quotient. The presented group then has a nontrivial elementary abelian quotient, so it cannot be $U_d$.
\end{proof}

\begin{proof}[Proof of Theorem \ref{thm:nikolov-B1-full}\textup{(iv)}]
Combine Propositions \ref{prop:Ud-projective} and \ref{prop:balanced-presentation}.
\end{proof}

\begin{remark}\label{rem:retract}
The section in Proposition \ref{prop:Ud-projective} identifies $U_d$ with a closed subgroup of $\wideF_d$, and $s\circ\pi$ is a retraction onto this subgroup.
\end{remark}

\begin{remark}\label{rem:what-is-settled}
Theorem \ref{thm:nikolov-B1-full} proves the profinite Conjecture B1 of \cite{NikolovFiniteImages}. By Nikolov's reductions, Conjecture A of \cite{NikolovFiniteImages} is therefore equivalent to Conjecture B2. The latter asks for a dense finitely generated abstractly perfect subgroup in every finitely generated perfect profinite group. Every topologically $d$-generated perfect profinite group has all its finite continuous quotients among those of $U_d$. The compactness argument of Proposition \ref{prop:rigidity} therefore makes it a continuous quotient of $U_d$. Dense finitely generated abstractly perfect subgroups pass to quotients. Thus Conjecture A is equivalent to the assertion that $U_d$ contains such a subgroup for every $d\geq2$.
\end{remark}

\section{Further applications}\label{sec:further-applications}
We apply the moment bounds to random groups with finite group actions and study positive finite generation in the Liu--Wood model.

\subsection{A transfer principle for finite group actions}\label{subsec:equivariant-transfer}
Let $\Gamma$ be a finite group. A profinite $\Gamma$-group is a profinite group with a continuous action of $\Gamma$ by automorphisms. We write $\Sur_\Gamma(X,A)$ for the set of continuous epimorphisms $\phi:X\to A$ such that $\phi(\gamma(x))=\gamma(\phi(x))$ for all $x\in X$ and $\gamma\in\Gamma$. For $\Delta\leq\Gamma$, put
$$
A^\Delta:=\{a\in A:\gamma(a)=a\text{ for every }\gamma\in\Delta\}.
$$
The semidirect product $X\rtimes\Gamma$ has the product topology and multiplication
$$
(x,\gamma)(y,\eta)=(x\gamma(y),\gamma\eta).
$$

Let $\Pi$ be a set of primes not dividing $|\Gamma|$. A finite $\Pi$-group is a finite group whose order has all its prime divisors in $\Pi$. A profinite group is pro-$\Pi$ if every finite continuous quotient is a $\Pi$-group. We also write pro-$m'$ when $\Pi$ consists of the primes not dividing $m$. A pro-$\Pi$ $\Gamma$-group $X$ is \emph{admissible} if
$$
X=\overline{\langle x^{-1}\gamma(x):x\in X,\ \gamma\in\Gamma\rangle}.
$$
For a finite group $Q$, let $O_\Pi(Q)$ be its largest normal $\Pi$-subgroup.

\begin{proposition}\label{prop:transfer}
Let $X$ be a random admissible pro-$\Pi$ $\Gamma$-group. Suppose that the functions $|\Sur_\Gamma(X,A)|$ are measurable and that, for some $M\geq1$ and $\beta\geq0$,
\begin{equation}\label{eq:equivariant-moment}
\E|\Sur_\Gamma(X,A)|\leq M|A|^\beta
\end{equation}
for every finite admissible $\Pi$-$\Gamma$-group $A$. Then, for every finite group $Q$,
\begin{equation}\label{eq:transferred-moment}
\E|\Sur(X\rtimes\Gamma,Q)|\leq M|Q|^{d(\Gamma)+\beta}.
\end{equation}
Both $X\rtimes\Gamma$ and $X$ are topologically finitely generated almost surely.
\end{proposition}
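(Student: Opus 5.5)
The plan is to convert epimorphisms $X\rtimes\Gamma\twoheadrightarrow Q$ into $\Gamma$-equivariant epimorphisms from $X$ onto finite admissible $\Pi$-$\Gamma$-groups. Once that is done, Theorem \ref{thm:finite-generation} finishes the argument with $\alpha=d(\Gamma)+\beta$ and $\delta=0$.

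\textbf{The key dictionary.} Fix a continuous epimorphism $\phi:X\rtimes\Gamma\twoheadrightarrow Q$ and put $A=\phi(X)$, a normal $\Pi$-subgroup of $Q$. The homomorphism $\phi|_\Gamma$ lets $\Gamma$ act on $A$ by conjugation, and $\phi|_X:X\twoheadrightarrow A$ is then $\Gamma$-equivariant. Since admissibility of $X$ is preserved by equivariant quotients, $A$ is a finite admissible $\Pi$-$\Gamma$-group. Conversely, $\phi$ is recovered from two pieces of data: the homomorphism $\phi|_\Gamma:\Gamma\to Q$, of which there are at most $|Q|^{d(\Gamma)}$ since it is determined on a generating set of $\Gamma$, and the map $\phi|_X$. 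Given $\phi|_\Gamma$, the image $A$ is determined. Indeed, $Q=A\,\phi(\Gamma)$ with $|\Gamma|$ coprime to $|A|$, and I expect to show $A=O_\Pi(Q)$, or at least that $A$ is one of at most a bounded number of normal subgroups. The cleanest route is to show $A=O_\Pi(Q)\cap\langle\text{normal closure of }[Q,\phi(\Gamma)]\text{-type elements}\rangle$. More simply: $A$ is normal, $Q/A$ is a quotient of $\Gamma$ and hence a $\Pi'$-group, and $A$ is a $\Pi$-group, so $A=O_\Pi(Q)$.

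\textbf{Counting.} Summing over the $\le|Q|^{d(\Gamma)}$ choices of $\phi|_\Gamma$ (each fixing the action on $A=O_\Pi(Q)$), the number of compatible $\phi|_X$ is at most $|\Sur_\Gamma(X,A)|$. This gives
\[
|\Sur(X\rtimes\Gamma,Q)|\le\sum_{\psi\in\Hom(\Gamma,Q)}|\Sur_\Gamma(X,A_\psi)|,
\]
where $A_\psi$ is $O_\Pi(Q)$ with the $\psi$-conjugation action. Compatibility of $\phi|_X$ and $\phi|_\Gamma$ defines a homomorphism on the semidirect product, so the map is injective. Taking expectations and using $|A|\le|Q|$ yields \eqref{eq:transferred-moment}.

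\textbf{Conclusion, and the main obstacle.} The moment bound shows that the law of $X\rtimes\Gamma$ is a decent space with $\alpha=d(\Gamma)+\beta$ and $\delta=0$. Measurability of $\nu_Q$ follows from the same finite decomposition expressing $|\Sur(X\rtimes\Gamma,Q)|$ in terms of the measurable functions $|\Sur_\Gamma(X,A)|$. For this I expect to need an equality rather than an inequality, which holds if we also record whether the image is all of $Q$; for measurability it suffices to write $\nu_Q$ as a finite sum of counts of equivariant maps $X\to A$ satisfying a surjectivity condition that depends only on the image. I expect this to be the main technical nuisance. Theorem \ref{thm:finite-generation} then gives $d(X\rtimes\Gamma)<\infty$ almost surely. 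Finally, $X$ is an open subgroup of finite index $|\Gamma|$ in $X\rtimes\Gamma$, and by the Schreier bound $d(X)\le|\Gamma|(d(X\rtimes\Gamma)-1)+1$. Hence $X$ is also topologically finitely generated almost surely.
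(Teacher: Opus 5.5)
Your proposal is correct and follows the paper's proof essentially step for step. You identify $\phi(X)=O_\Pi(Q)$ as a normal $\Pi$-subgroup with $\Pi'$-quotient, decompose by $\psi=\phi|_\Gamma$, use that equivariant quotients of $X$ are admissible (so non-admissible targets contribute zero), bound $|\Hom(\Gamma,Q)|\le|Q|^{d(\Gamma)}$, apply Theorem~\ref{thm:finite-generation} with $\delta=0$, and finish with the Schreier bound.

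The measurability step you flag as a nuisance is immediate in the paper. If $\psi$ satisfies $Q=O_\Pi(Q)\psi(\Gamma)$, then every equivariant epimorphism $X\twoheadrightarrow O_\Pi(Q)_\psi$ combines with $\psi$ into an epimorphism $(x,\gamma)\mapsto\phi(x)\psi(\gamma)$. Hence your inequality is an equality once the sum is restricted to such $\psi$, and this exhibits $\nu_Q$ as a finite sum of the given measurable functions.
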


\begin{proof}
Put $Y=X\rtimes\Gamma$. If $f:Y\twoheadrightarrow Q$ and $N=f(X)$, then $N$ is a normal $\Pi$-subgroup and $Q/N$ is a quotient of $\Gamma$. Every normal $\Pi$-subgroup of $Q$ maps trivially to $Q/N$, since its image has order dividing $|\Gamma|$ and has all its prime divisors in $\Pi$. Hence $N=O_\Pi(Q)$.

Let $\rho=f|_\Gamma$. The restriction $f|_X$ is a $\Gamma$-equivariant epimorphism onto $O_\Pi(Q)$ for the action
$$
\gamma\cdot a=\rho(\gamma)a\rho(\gamma)^{-1}.
$$
Denote this $\Gamma$-group by $O_\Pi(Q)_\rho$. Conversely, if $\rho:\Gamma\to Q$ satisfies $Q=O_\Pi(Q)\rho(\Gamma)$ and $\phi:X\twoheadrightarrow O_\Pi(Q)_\rho$ is equivariant, then
$$
(x,\gamma)\longmapsto\phi(x)\rho(\gamma)
$$
is an epimorphism $Y\twoheadrightarrow Q$. Thus
\begin{equation}\label{eq:transfer-identity}
|\Sur(Y,Q)|=\sum_{\substack{\rho\in\Hom(\Gamma,Q)\\Q=O_\Pi(Q)\rho(\Gamma)}}|\Sur_\Gamma(X,O_\Pi(Q)_\rho)|.
\end{equation}
Every equivariant quotient of $X$ is admissible. A summand in \eqref{eq:transfer-identity} is therefore zero unless its target is admissible. In the remaining cases, \eqref{eq:equivariant-moment} applies. Since
$$
|\Hom(\Gamma,Q)|\leq |Q|^{d(\Gamma)},\qquad |O_\Pi(Q)|\leq |Q|,
$$
taking expectations gives \eqref{eq:transferred-moment}. The finite sum also proves measurability. Theorem \ref{thm:finite-generation}, with $\delta=0$, shows that $Y$ is finitely generated almost surely. The profinite Schreier bound \cite[Section 3.6]{RibesZalesskii} gives
$$
d(X)\leq 1+[Y:X](d(Y)-1)=1+|\Gamma|(d(Y)-1),
$$
so $X$ is finitely generated almost surely.
\end{proof}

We apply this proposition to three probability measures defined by random presentations. We write $\mu_{\Gamma,u}$ for the measure constructed by Liu--Wood--Zureick-Brown in \cite[Definition 3.15 and Section 5.4]{LWZB}, $\mu_{\Gamma,\Gamma_\infty}$ for the measure constructed by Liu--Willyard in \cite[Definition 6.1]{LiuWillyard}, and $\mu_{\boldsymbol\Gamma}$ for the measure constructed by Willyard in \cite[Definitions 5.1, 5.2, and 5.5]{Willyard}. Here $u\in\Z$ in the first case, $\Gamma_\infty\leq\Gamma$ is cyclic in the second, and $\boldsymbol\Gamma=(\Gamma_1,\ldots,\Gamma_{u+1})$, with $u\geq0$ and $\Gamma_i\leq\Gamma$, in the third. The underlying spaces and their topologies are defined in \cite[Section 3.2]{LWZB}, \cite[Section 6]{LiuWillyard}, and \cite[Section 5]{Willyard}, respectively. Their basic open sets specify the isomorphism type of the maximal quotient in the class generated by a finite set of finite $\Gamma$-groups under $\Gamma$-invariant subgroups, $\Gamma$-equivariant quotients, and finite products. The functions $|\Sur_\Gamma(X,A)|$ are constant on such basic open sets once the chosen class contains $A$, and hence are measurable.

\begin{corollary}\label{cor:arithmetic-laws}
The following random profinite $\Gamma$-groups are topologically finitely generated almost surely.
\begin{enumerate}[label=\textup{(\roman*)}]
\item The Liu--Wood--Zureick-Brown law $\mu_{\Gamma,u}$, for $u\in\Z$.
\item The Liu--Willyard law $\mu_{\Gamma,\Gamma_\infty}$, where $\Gamma_\infty\leq\Gamma$ is cyclic.
\item Willyard's law $\mu_{\boldsymbol\Gamma}$, where $\boldsymbol\Gamma=(\Gamma_1,\ldots,\Gamma_{u+1})$, $u\geq0$, and $\Gamma_i\leq\Gamma$.
\end{enumerate}
If $X$ has the respective law, then for every finite group $Q$,
$$
\E|\Sur(X\rtimes\Gamma,Q)|\leq
\begin{cases}
|Q|^{d(\Gamma)+\max\{0,-u\}}&\text{in \textup{(i)}},\\
|Q|^{d(\Gamma)}&\text{in \textup{(ii)} and \textup{(iii)}}.
\end{cases}
$$
\end{corollary}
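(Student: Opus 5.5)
The corollary should follow from Proposition \ref{prop:transfer} once the equivariant moment bound \eqref{eq:equivariant-moment} is checked for each of the three laws. I would therefore read off the known moments from the cited works and verify that they fit the form $M|A|^\beta$.

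For (i), the Liu--Wood--Zureick-Brown paper computes the moment of $\mu_{\Gamma,u}$ for a finite admissible $\Gamma$-group $A$ of order prime to $|\Gamma|$. It is a product over the irreducible $\F_q[\Gamma]$-constituents of $A$. Each factor has the shape $|H^0(\Gamma,\cdot)|^{-1}$ raised to the power $u$ (or a related power), possibly with a correction involving $|A^\Gamma|$. I would bound every factor in absolute value by the size of the corresponding piece of $A$, raised to the power $\max\{0,-u\}$. Since these pieces multiply to at most $|A|$, this gives
$$
\E|\Sur_\Gamma(X,A)|\leq |A|^{\max\{0,-u\}},
$$
that is, $M=1$ and $\beta=\max\{0,-u\}$. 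Concretely, I would use that their moment formula equals $|A|^{-u}\cdot|A^\Gamma|^{u}$ up to $H^1$ and $H^2$-type factors that are at most $1$. Plugging this into \eqref{eq:transferred-moment} gives the stated bound $|Q|^{d(\Gamma)+\max\{0,-u\}}$.

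For (ii) and (iii), the Liu--Willyard and Willyard moments take the form $|A^{\Gamma_\infty}|^{-1}$ (respectively $\prod_i |A^{\Gamma_i}|/|A|^{u}$, suitably normalized). In these models the number of relators exceeds the number of generators by the relevant invariant dimensions. I would verify directly from the cited definitions that these moments are at most $1$, which gives $M=1$ and $\beta=0$, and hence the bound $|Q|^{d(\Gamma)}$. Measurability of $|\Sur_\Gamma(X,A)|$ was already noted in the paragraph preceding the corollary, so it needs no further work.

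Finite generation then follows directly from the last sentence of Proposition \ref{prop:transfer}, which applies Theorem \ref{thm:finite-generation} with $\delta=0$ and then the Schreier bound.

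The main obstacle is matching precisely the normalizations of the three cited moment formulas. In particular, for Willyard's law $\mu_{\boldsymbol\Gamma}$ I must confirm that the moment is bounded by $1$ rather than merely being finite. If it is only bounded by some $|A|^{c}$, the argument still yields almost sure finite generation, but the displayed exponent would have to be adjusted.
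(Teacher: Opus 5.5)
Your route is the paper's: apply Proposition~\ref{prop:transfer} with the cited equivariant moments, then use its last sentence for finite generation. However, the proof of this corollary consists of verifying \eqref{eq:equivariant-moment} with $M=1$ and the stated $\beta$. You leave exactly that step open, and your formulas for it are partly wrong. The cited results are exact identities with no $H^1$- or $H^2$-type correction factors:
$\E_{\mu_{\Gamma,u}}|\Sur_\Gamma(X,A)|=[A:A^\Gamma]^{-u}$,
$\E_{\mu_{\Gamma,\Gamma_\infty}}|\Sur_\Gamma(X,A)|=[A^{\Gamma_\infty}:A^\Gamma]^{-1}$, and
$\E_{\mu_{\boldsymbol\Gamma}}|\Sur_\Gamma(X,A)|=|A^\Gamma|/\prod_{i=1}^{u+1}|A^{\Gamma_i}|$
(\cite[Theorem 6.2]{LWZB}, \cite[Theorem 6.3(3)]{LiuWillyard}, \cite[Theorem 6.2]{Willyard}).

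For (i), your ``concrete'' formula $|A|^{-u}|A^\Gamma|^{u}$ is the right one, and $1\le[A:A^\Gamma]\le|A|$ gives the bound $|A|^{\max\{0,-u\}}$. For (ii), your $|A^{\Gamma_\infty}|^{-1}$ is not the cited moment, although it too is at most $1$. For (iii), your guess $\prod_i|A^{\Gamma_i}|/|A|^{u}$ is inverted, and taken literally it is not bounded by $1$: it equals $|A|$ when every $\Gamma_i$ is trivial.

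The missing observation is one line. Since $\Gamma_\infty,\Gamma_i\le\Gamma$, one has $A^\Gamma\le A^{\Gamma_\infty}$ and $A^\Gamma\le A^{\Gamma_i}$. Hence the (ii) moment is the reciprocal of a positive integer. The (iii) moment is at most $|A^\Gamma|^{1-(u+1)}=|A^\Gamma|^{-u}\le 1$, using $u\ge 0$. Your fallback of a bound $|A|^c$ would not suffice, because the displayed inequality is part of the statement.

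You also skip the other hypothesis of Proposition~\ref{prop:transfer}: $X$ must be an admissible pro-$\Pi$ $\Gamma$-group for a set $\Pi$ of primes not dividing $|\Gamma|$. Here one uses that the laws are supported on admissible pro-$|\Gamma|'$ groups in (i) and (iii), and on admissible pro-$(2|\Gamma|)'$ groups in (ii). One also needs that the moment identities hold for all finite admissible targets in the corresponding category. That is precisely the range of $A$ required in \eqref{eq:equivariant-moment}.
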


\begin{proof}
These laws are supported on admissible pro-$|\Gamma|'$ groups in \textup{(i)} and \textup{(iii)}, and on admissible pro-$(2|\Gamma|)'$ groups in \textup{(ii)}. For each finite admissible target $A$ in the corresponding category, the cited papers prove
\begin{align}
\E_{\mu_{\Gamma,u}}|\Sur_\Gamma(X,A)|
&=[A:A^\Gamma]^{-u}\leq |A|^{\max\{0,-u\}},\label{eq:LWZB-moment}\\
\E_{\mu_{\Gamma,\Gamma_\infty}}|\Sur_\Gamma(X,A)|
&=[A^{\Gamma_\infty}:A^\Gamma]^{-1}\leq1,\label{eq:LiuWillyard-moment}\\
\E_{\mu_{\boldsymbol\Gamma}}|\Sur_\Gamma(X,A)|
&=\frac{|A^\Gamma|}{\prod_{i=1}^{u+1}|A^{\Gamma_i}|}\leq |A^\Gamma|^{-u}\leq1.\label{eq:Willyard-moment}
\end{align}
These are \cite[Theorem 6.2]{LWZB}, \cite[Theorem 6.3(3)]{LiuWillyard}, and \cite[Theorem 6.2]{Willyard}, respectively. The last inequality uses $A^\Gamma\leq A^{\Gamma_i}$ for each $i$. Proposition \ref{prop:transfer} gives the assertions.
\end{proof}

The corollary concerns the probability measures defined by these random presentations. Their conjectural relation to Galois groups is described in the cited papers.

\subsection{Positive finite generation}\label{subsec:PFG}
For a profinite group $\Gp$, let $\mu_{\Gp}$ be its normalized Haar measure, the translation-invariant Borel probability measure on $\Gp$. Put
$$
P(\Gp,d):=\mu_{\Gp}^{\otimes d}\{(g_1,\ldots,g_d):\overline{\langle g_1,\ldots,g_d\rangle}=\Gp\}.
$$
The group $\Gp$ is \emph{positively finitely generated}, or PFG, if $P(\Gp,d)>0$ for some integer $d\geq1$. It has \emph{polynomial maximal-subgroup growth} if the number of maximal open subgroups of index $n$ is at most $n^c$ for some fixed $c$ and every $n\geq2$. These properties are equivalent by \cite[Theorem 4]{MannShalev}.

A closed normal subgroup $N\trianglelefteq H$ is \emph{positively finitely normally generated in $H$} if, for some $k\geq1$,
$$
\mu_N^{\otimes k}\{(x_1,\ldots,x_k):\overline{\langle x_1,\ldots,x_k\rangle^H}=N\}>0.
$$
Here $\overline{\langle x_1,\ldots,x_k\rangle^H}$ is the smallest closed normal subgroup of $H$ containing the tuple. A finitely generated profinite group $\Gp$ is \emph{positively finitely related}, or PFR, if this condition holds for the kernel of every continuous epimorphism $H\twoheadrightarrow\Gp$ with $H$ finitely generated \cite[Definition 3.3 and Lemma 3.4]{KionkeVannacci}.

For a prime $p$ and $n\geq1$, let $r_n(\Gp,p)$ count the isomorphism classes of continuous irreducible representations $\Gp\to\GL_n(\F_p)$. Irreducibility means that the only invariant subspaces are $0$ and $\F_p^n$; isomorphism means conjugacy by $\GL_n(\F_p)$. The group $\Gp$ has \emph{uniformly bounded exponential representation growth}, or UBERG, if
$$
r_n(\Gp,p)\leq p^{cn}
$$
for some $c$ independent of $p$ and $n$ \cite[Definition 5.1]{KionkeVannacci}.

A \emph{minimal extension} of a finitely generated profinite group $\Gp$ is a continuous epimorphism $\pi:E\twoheadrightarrow\Gp$, with $E$ profinite, whose kernel is a finite nontrivial minimal normal subgroup of $E$. Its degree is $|\ker\pi|$. We count extensions up to continuous isomorphisms commuting with their maps to $\Gp$. The group $\Gp$ has \emph{polynomial minimal-extension growth} if there is a constant $c$ such that the number of these isomorphism classes of degree $n$ is at most $n^c$ for every $n\geq2$ \cite[Section 3.4]{KionkeVannacci}.

A profinite group $\Gp$ is \emph{positively finitely presented} if it is PFG and the kernel of every epimorphism $P\twoheadrightarrow\Gp$ from a PFG projective profinite group is positively finitely normally generated in $P$. Projectivity is defined in Section \ref{sec:perfect-universal}. Equivalently, $\Gp$ is PFG and finitely presented \cite[Definition 3.2 and Proposition 3.3]{CorobCookVannacci}.

We use the following criterion of Jaikin-Zapirain and Pyber. Let $L$ be a finite monolithic primitive group with nonabelian socle $A$. Recall its crown multiplicity
\begin{equation}\label{eq:def-kappa}
\kappa_L(\Gp):=\sup\{k\geq1:\Gp\twoheadrightarrow L_k\},
\end{equation}
where the supremum of the empty set is $0$. Write $l(A)$ for the least degree of a faithful transitive permutation representation of $A$. Thus $l(A)$ is the least cardinality of a finite set on which $A$ acts transitively with trivial kernel. A transitive permutation action is \emph{primitive} if it preserves no partition other than the partition into singletons and the partition with one part.

A finitely generated profinite group $\Gp$ is PFG if and only if there is a constant $c$ such that
\begin{equation}\label{eq:JZP-criterion}
\kappa_L(\Gp)\leq l(A)^c
\end{equation}
for every finite monolithic primitive group $L$ with nonabelian socle $A$. An equivalent condition is that, for some $c$,
\begin{equation}\label{eq:JZP-epi-criterion}
|\Sur(\Gp,L)|\leq |L|\,l(A)^c
\end{equation}
for every such $L$ \cite[Theorem 11.1]{JaikinPyber}.

The properties used below define Borel sets on the locus of finitely generated groups. Indeed, this locus is Borel by Section \ref{sec:witnesses}, and \eqref{eq:JZP-epi-criterion} expresses PFG using countably many inequalities between continuous epimorphism-counting functions. For fixed $p,n$, the function $r_n(\Gp,p)$ depends only on the level-$\{\GL_n(\F_p)\}$ completion, so it is continuous. Taking the union over integral $c\geq1$ in the definition of UBERG shows that its locus is Borel. Finally, PFR and polynomial minimal-extension growth are equivalent to finite presentation together with UBERG on this locus \cite[Theorems 3.9 and 5.6]{KionkeVannacci}. Proposition \ref{prop:SW-measurability} and the characterization of positive finite presentation above give the remaining assertions.

\begin{lemma}\label{lem:PFG-monolith-count}
For each integer $D\geq1$, there is a constant $C_D$ such that, for every integer $n\geq2$, at most $n^{C_D}$ isomorphism classes of finite monolithic primitive groups $L$ satisfy
$$
d(L)\leq D,\qquad A=\soc(L)\text{ is nonabelian},\qquad l(A)=n.
$$
\end{lemma}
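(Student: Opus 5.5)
We embed $L$ into a small symmetric group and then count $D$-generated subgroups there. Let $A=\soc(L)\cong S^\rho$, and let $l(A)=n$.

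\textbf{Reduce to $\Aut(S)$.} Since $A$ acts faithfully and transitively on a set of size $n$, some point stabilizer $A_0$ has trivial core. Write $\pi_i:A\to S$ for the coordinate projections. A standard fact about minimal transitive degrees of direct powers gives $l(S^\rho)=\rho\, l(S)$, so $\rho\le n$ and $l(S)\le n$. By Proposition~\ref{prop:concrete-monoliths}, $L$ is the full preimage of $K\le\Out(S)\wr\Sym_\rho$ in $\Aut(S^\rho)=\Aut(S)\wr\Sym_\rho$. It is therefore a subgroup of $\Aut(S)\wr\Sym_\rho$, determined up to isomorphism by $S$, $\rho$ and $K$.

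\textbf{Embed in a symmetric group of degree about $n$.} The group $\Aut(S)$ acts faithfully on the conjugates of a point stabilizer of a minimal-degree action of $S$, giving a set of size $l'(S)$. Using the classification, one has $l'(S)\le l(S)^{c}$ for an absolute constant $c$; for alternating and Lie type groups the minimal degree of $\Aut(S)$ is polynomial in that of $S$. The product action then embeds $L\le\Aut(S)\wr\Sym_\rho$ into $\Sym_N$ with $N=\rho\, l'(S)\le n^{c+1}$.

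\textbf{Count.} The isomorphism class of $L$ is determined by a generating $D$-tuple inside $\Sym_N$. This gives at most $(N!)^D$ classes, which is superexponential in $n$ and far too many. We therefore need a better count. Fix $S$ and $\rho$. For each fixed $n$ there are at most $n$ choices of $\rho$, and only polynomially many $S$ with $l(S)\le n$: for each degree there are $O(1)$ simple groups of that minimal degree, by the classification. The monolithic group is then determined by $K\le\Out(S)\wr\Sym_\rho$ with $d(K)\le D$, up to conjugacy. Since $|\Out(S)|\le O(\log n)$, it remains to count $D$-generated transitive subgroups $K\le \Out(S)\wr \Sym_\rho$ up to conjugacy.

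\textbf{Main obstacle.} The number of $D$-generated subgroups of $\Sym_\rho$ up to conjugacy is not polynomial in $\rho$ in general, so the plan has to use that $l(A)=n$ couples $\rho$ with $l(S)$. If $\rho$ is large, then $n\ge 5\rho$ and the bound must be polynomial in $n$; a crude count such as $(\rho!)^D$ is not polynomial. So this approach needs refining. What I would try first is the known result (Lubotzky, also used by Jaikin-Zapirain--Pyber) that the number of conjugacy classes of $D$-generated transitive subgroups of $\Sym_\rho$, or of $\Out(S)\wr\Sym_\rho$, is at most $\rho^{c D}$. I would cite the transitive case of that bound and combine it with the polynomial counts for $S$ and $\rho$ above, which gives $n^{C_D}$.
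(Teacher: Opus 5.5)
\textbf{There is a genuine gap, at the final counting step.} The bound you plan to cite there is false. The number of conjugacy classes of $D$-generated \emph{transitive} subgroups of $\Sym_\rho$ is not bounded by $\rho^{cD}$. Take the regular representations of pairwise non-isomorphic $2$-generated groups of order $\rho=p^k$. These are pairwise non-conjugate transitive subgroups of $\Sym_\rho$, and for suitable $k$ there are $p^{ck^2}=\rho^{c\log_p\rho}$ such groups; this is the source of the $n^{c\log n}$ normal subgroup growth of free groups. So restricting to transitive subgroups does not repair the non-polynomial count you noticed yourself.

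The polynomial bound $m^{c_pD}$ of \cite[Theorem~8.1]{JaikinPyber} is for \emph{primitive} subgroups of $\Sym(m)$. Your embedding, of degree $\rho\,l'(S)$, is the imprimitive wreath action; the product action would have degree $l'(S)^\rho$. So that theorem cannot be applied to your embedding either.

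\textbf{The missing idea is in your reduction step.} The identity $l(S^\rho)=\rho\,l(S)$ holds for the minimal faithful degree. Here, however, $l(A)$ is the minimal degree of a faithful \emph{transitive} action; for instance $l(A_5\times A_5)=25$, not $10$. In general, the orbits of one simple factor form a block system. The factors lying in the kernel of the action on these blocks centralise that factor, which is transitive on each block, so they act semiregularly there. Induction then gives $l(S^\rho)\geq l(S)^{\rho/2}$, hence $\rho=O(\log n)$ and $l(S)^\rho\leq n^2$.

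This exponential dependence on $\rho$ is what makes the product action of $L\leq\Aut(S)\wr\Sym_\rho$ on $\Delta^\rho$ work. Here $\Delta$ is a primitive set, of size polynomial in $l(S)$, for the almost simple component. The product action is then a faithful \emph{primitive} action of degree polynomial in $n$; this is \cite[Lemma~9.2]{JaikinPyber}, which gives degree at most $n^{c_7}$. The paper's proof applies \cite[Theorem~8.1]{JaikinPyber} in each $\Sym(m)$ with $m\leq n^{c_7}$ and sums, obtaining $n^{c_7(c_pD+1)}$.

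With only your bound $\rho\leq n$, the superpolynomial (in $\rho$) number of transitive $K\leq\Out(S)\wr\Sym_\rho$ cannot be turned into a bound polynomial in $n$. Even with $\rho=O(\log n)$, the crude count $(\rho!)^D$ gives only $n^{O(D\log\log n)}$. So you genuinely need either the primitive-group route or a sharp enumeration of transitive groups.
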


\begin{proof}
By \cite[Lemma 9.2]{JaikinPyber}, each such $L$ has a faithful primitive permutation representation of degree at most $n^{c_7}$, for an absolute constant $c_7$. By \cite[Theorem 8.1]{JaikinPyber}, there are at most $m^{c_pD}$ conjugacy classes of $D$-generated primitive subgroups of $\Sym(m)$, for an absolute constant $c_p$. Thus the number in question is at most
\begin{equation}\label{eq:CD-count}
\sum_{m\leq n^{c_7}}m^{c_pD}\leq n^{c_7(c_pD+1)}.
\end{equation}
We may take $C_D=c_7(c_pD+1)$.
\end{proof}

For a nonabelian finite simple group $S$, the quantity $Q_S(\Gp)$ from \eqref{eq:def-QS} counts the open normal subgroups with quotient isomorphic to $S$. Distinct such kernels give an epimorphism onto the product of the quotients. Hence
\begin{equation}\label{eq:kappa-equals-Q}
\kappa_S(\Gp)=Q_S(\Gp).
\end{equation}
Lemma \ref{lem:simple-poisson} gives their joint distribution under $\mu_u$.

\begin{theorem}\label{thm:LW-PFG}
Let $u\in\Z$, and let $\Gp$ have law $\mu_u$.
\begin{enumerate}[label=\textup{(\roman*)}]
\item If $u\geq0$, then almost surely
\begin{equation}\label{eq:bounded-crowns}
\sup_L\kappa_L(\Gp)<\infty,
\end{equation}
where $L$ ranges over the finite monolithic primitive groups with nonabelian socle.
\item If $u=-1$, then $\Gp$ is PFG almost surely, but
\begin{equation}\label{eq:unbounded-crowns}
\sup_L\kappa_L(\Gp)=\infty
\end{equation}
almost surely. For every $k\geq1$, infinitely many alternating simple groups $A_n$ satisfy $\Gp\twoheadrightarrow A_n^k$, almost surely.
\item If $u\leq-2$, then $\Gp$ is not PFG and does not have UBERG, almost surely.
\end{enumerate}
Consequently,
\begin{equation}\label{eq:PFG-dichotomy}
\mu_u\{\Gp:\Gp\text{ is PFG}\}=\begin{cases}
1,&u\geq-1,\\
0,&u\leq-2.
\end{cases}
\end{equation}
In the range $u\geq-1$, almost every $\Gp$ is PFR, has polynomial minimal-extension growth, has UBERG, and is positively finitely presented. In the range $u\leq-2$, almost every $\Gp$ has none of these properties.
\end{theorem}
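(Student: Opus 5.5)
The plan is a first-moment/Borel--Cantelli argument for the crown multiplicities, combined with the Jaikin-Zapirain--Pyber criterion \eqref{eq:JZP-criterion} and the Poisson description of Lemma \ref{lem:simple-poisson}. Throughout we work on the full-measure set where $d(\Gp)<\infty$ (Corollary \ref{cor:LW-fg}) and, by Theorem \ref{thm:finite-presentation}, $\Gp$ is finitely presented.

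\textbf{Parts (i) and the PFG half of (ii).} Fix $D$ and restrict to the event $d(\Gp)\leq D$, which is legitimate by taking a countable union over $D$. On this event every $L$ with $\Gp\twoheadrightarrow L$ satisfies $d(L)\leq D$. For a monolithic primitive $L$ with nonabelian socle $A$, $N=|A|$, and $K=L/A$, Remark \ref{rem:quotient-event}, Proposition \ref{prop:crown-automorphisms}(ii), and Proposition \ref{prop:LW-moments} give
$$\mu_u(E(L_k))\leq \frac{|L_k|^{-u}}{N^k\,k!}\leq \frac{|K|^{a}N^{(a-1)k}}{k!},$$
where $a=\max\{0,-u\}$.

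\emph{Case $u\geq0$.} Take $k=2$. Count $L$ with given $l(A)=n$ by Lemma \ref{lem:PFG-monolith-count}; this gives at most $n^{C_D}$ classes. Since $N\geq n$, and indeed $N$ is at least exponential in $n$ when $A$ is a product of many factors, the total is bounded by $\sum_n n^{C_D}\max_{l(A)=n}N^{-2}$. This may fail to converge for small exponents. I would instead take $k=k_D$ large enough that $\sum_n n^{C_D}n^{-k}<\infty$. Borel--Cantelli then shows that almost surely only finitely many $L$ have $\kappa_L\geq k_D$. Each individual $\kappa_L$ is finite because $\Gp$ is finitely generated, so $\sup_L\kappa_L<\infty$, which gives \eqref{eq:bounded-crowns}.

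\emph{Case $u=-1$.} Here $a=1$, so the bound reads $|K|/k!$, and $|K|$ can be as large as roughly $|\Out(A)|$, which is small relative to $n$-powers. Taking $k=\lceil c\log n\rceil$, or more simply $k=n^{c}$, makes $|K|\,n^{C_D}/k!$ summable over $n$. Borel--Cantelli then gives $\kappa_L\leq l(A)^c$ for all but finitely many $L$. Adjusting $c$ handles the remaining finitely many $L$, and PFG follows from \eqref{eq:JZP-criterion}.

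\textbf{Unboundedness in (ii).} By \eqref{eq:kappa-equals-Q}, $\Gp\twoheadrightarrow A_n^k$ if and only if $Q_{A_n}\geq k$. Lemma \ref{lem:simple-poisson} says these variables are independent Poisson with mean $\lambda_{A_n,-1}=1/2$. The probability $P(\mathrm{Pois}(1/2)\geq k)$ is a positive constant independent of $n$, so the second Borel--Cantelli lemma gives infinitely many $n$ with $Q_{A_n}\geq k$, almost surely. Intersecting over $k$ yields \eqref{eq:unbounded-crowns}.

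\textbf{Part (iii).} For $u=-k\leq-2$, the variable $Q_{A_n}$ is Poisson with mean $\lambda_n=|A_n|^{k-1}/2$, as in Lemma \ref{lem:positive-relator-count}. By Chebyshev, $Q_{A_n}\geq\lambda_n/2$ with probability $1-O(1/\lambda_n)$, and these error terms are summable. Hence almost surely $\kappa_{A_n}(\Gp)\geq |A_n|^{k-1}/4$ for all large $n$. Since $l(A_n)=n$ and $|A_n|^{k-1}$ grows like $(n!)^{k-1}$, which is superpolynomial in $n$, \eqref{eq:JZP-criterion} fails, so $\Gp$ is not PFG.

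For UBERG, each copy of $A_n$ in the quotient $A_n^{Q}$ yields distinct irreducible constituents. Concretely, the deleted permutation module of $A_n$ over $\F_p$, with $p\nmid n$ (say $p=2$ and $n$ odd), has dimension about $n$. Inflating it through the $Q_{A_n}$ distinct kernels gives pairwise nonisomorphic irreducible representations, since they have distinct kernels. This gives $r_{n-1}(\Gp,2)\geq Q_{A_n}\geq (n!/2)^{k-1}/4$, which exceeds $2^{c(n-1)}$ for every fixed $c$. So UBERG fails.

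\textbf{Consequences.} \eqref{eq:PFG-dichotomy} follows from (i)--(iii), using that (i) implies PFG via \eqref{eq:JZP-criterion}. The remaining properties are handled as follows. PFG implies UBERG by \cite{CorobCookVannacci}. Finite presentation holds almost surely by Theorem \ref{thm:finite-presentation}. Then \cite[Theorems 3.9, 5.6]{KionkeVannacci} give PFR and polynomial minimal-extension growth exactly when finite presentation and UBERG hold, and positive finite presentation is equivalent to PFG plus finite presentation. For $u\leq-2$, the failure of UBERG and PFG kills all of these properties.

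The main obstacle is the summation in (i) and (ii). It requires checking that $|K|$ and the number of $L$ per value of $l(A)$ are polynomial in $n$ via Lemma \ref{lem:PFG-monolith-count}, and that $|\Out(A)|$ is polynomial in $l(A)$, which follows from CFSG bounds.
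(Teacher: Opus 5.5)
Your proof is correct in outline. It reaches PFG and part (iii) by routes different from the paper's, and one of your supporting claims is false (though the claim is not needed).

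\textbf{PFG for $u\geq-1$.} The paper never bounds crown multiplicities here. It uses the epimorphism form \eqref{eq:JZP-epi-criterion} of the Jaikin-Zapirain--Pyber criterion and one Markov inequality, $\mu_u\{|\Sur(\Gp,L)|>|L|\,l(A)^a\}\leq|L|^{-u-1}l(A)^{-a}\leq l(A)^{-a}$. It then applies Borel--Cantelli over the at most $n^{C_D}$ classes with $l(A)=n$. This needs no information about $K=L/A$.

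Your route goes through \eqref{eq:JZP-criterion} and the bound $\mu_{-1}(\Gp\twoheadrightarrow L_k)\leq|K|/k!$, so it does need control of $|K|$. The justification you give is wrong: neither $|\Out(A)|$ nor $|K|$ is polynomial in $l(A)$, even for $2$-generated $L$. For $L=A_5\wr\Sym_\rho$ one has $l(A)\leq 5^\rho$ by the product action, but $|K|=\rho!$. So your option $k=\lceil c\log n\rceil$ is not supported by what you wrote.

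Your option $k=n^c$ does survive, with a crude bound in place of a polynomial one. The proof of Lemma \ref{lem:PFG-monolith-count} provides a faithful primitive action of $L$ of degree at most $n^{c_7}$. Hence $|K|\leq|L|\leq(n^{c_7})!$, and for $c>c_7$ the terms $n^{C_D}(n^{c_7})!/(n^c)!$ are summable.

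\textbf{Part (i).} This is the paper's argument in different packaging. You use the same bound $\mu_u(\Gp\twoheadrightarrow L_k)\leq l(A)^{-(u+1)k}$. Borel--Cantelli at a single level $k_D$, plus finiteness of each $\kappa_L$, replaces the paper's limit $k\to\infty$ over the decreasing events $E_{D,k}$.

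\textbf{Part (iii).} The paper uses $T_m=\GL_m(2)$. Its Poisson mean is $\gg 2^{m^2}$ while $l(T_m)\leq 2^m-1$. Its natural $m$-dimensional module is obviously irreducible, so one family disposes of both PFG and UBERG.

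Your alternating family also works. Chebyshev suffices because $\lambda_{A_n,u}=(n!/2)^{-u-1}/2$ dwarfs every power of $l(A_n)=n$. The cost is the irreducibility of the deleted permutation module of $A_n$ over $\F_2$. That is a known fact, but you can avoid it: take a nontrivial composition factor of the permutation module. It is faithful because $A_n$ is simple, and it has dimension at most $n$, which is all the UBERG contradiction needs.

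One small correction: $\lambda_{A_n,-1}=1/2$ only for $n\geq 7$, since $|\Out(A_6)|=4$.
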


\begin{proof}
Suppose first that $u\geq0$, and fix $D\geq1$. Let $L$ be monolithic primitive with nonabelian socle $A$, and put $K=L/A$ and $n=l(A)$. The action of $A$ on itself by left multiplication gives $n\leq|A|$. Since $|L_k|=|K||A|^k$, Proposition \ref{prop:crown-automorphisms}(ii) and the Liu--Wood moment identity give
\begin{equation}\label{eq:one-crown-PFG}
\mu_u\{\Gp:\Gp\twoheadrightarrow L_k\}
\leq\frac{|L_k|^{-u}}{|\Aut(L_k)|}
\leq\frac{|K|^{-u}|A|^{-(u+1)k}}{k!}
\leq n^{-(u+1)k}.
\end{equation}
Let $E_{D,k}$ be the event that $d(\Gp)\leq D$ and $\Gp$ has a quotient $L_k$ with $\soc(L)$ nonabelian. Every such $L$ is $D$-generated. Lemma \ref{lem:PFG-monolith-count} therefore gives
\begin{equation}\label{eq:EDk-bound}
\mu_u(E_{D,k})\leq\sum_{n\geq2}n^{C_D-(u+1)k}.
\end{equation}
For sufficiently large $k$, the summands are bounded by $n^{-2}$ and tend to zero termwise as $k\to\infty$. Hence the right-hand side tends to zero by dominated convergence. The events $E_{D,k}$ decrease, with intersection
$$
\{d(\Gp)\leq D,\ \sup_L\kappa_L(\Gp)=\infty\}.
$$
This intersection has measure zero. Almost-sure finite generation and a union over $D$ prove \textup{(i)}.

We next prove that $\Gp$ is PFG almost surely for every $u\geq-1$. Fix $D\geq1$, and choose an integer $a>C_D+1$. For each $D$-generated monolithic primitive group $L$ with nonabelian socle $A$, set $n=l(A)$ and
$$
B_L(a):=\{\Gp:|\Sur(\Gp,L)|>|L|n^a\}.
$$
Markov's inequality gives
\begin{equation}\label{eq:markov-PFG}
\mu_u(B_L(a))\leq |L|^{-u-1}n^{-a}\leq n^{-a}.
\end{equation}
By Lemma \ref{lem:PFG-monolith-count},
$$
\sum_{\substack{L\text{ monolithic primitive}\\ \soc(L)\text{ nonabelian},\ d(L)\leq D}}
\mu_u(B_L(a))\leq\sum_{n\geq2}n^{C_D-a}<\infty.
$$
The first Borel--Cantelli lemma shows that, almost surely, only finitely many of these $L$ violate \eqref{eq:JZP-epi-criterion} with exponent $a$. On the event $d(\Gp)\leq D$, all epimorphism counts are finite. The finitely many exceptions can therefore be included by increasing $a$ to a finite value depending on $\Gp$. A group $L$ with $d(L)>D$ is not a quotient of $\Gp$, so its epimorphism count is zero. Thus \eqref{eq:JZP-epi-criterion} holds for every relevant $L$. Almost-sure finite generation and a union over $D$ prove the assertion.

When $u=-1$, Lemma \ref{lem:simple-poisson} gives independent random variables
$$
Q_{A_n}\sim\operatorname{Pois}(1/2)\qquad(n\geq7),
$$
since $|\Out(A_n)|=2$. For each fixed $k\geq1$, the events $Q_{A_n}\geq k$ are independent and have the same positive probability. By the second Borel--Cantelli lemma, infinitely many of them occur, almost surely. Intersecting over $k$ and using \eqref{eq:kappa-equals-Q} proves \textup{(ii)}.

Suppose now that $u\leq-2$. For $m\geq5$, put
$$
T_m:=\mathrm{PSL}_m(2)=\GL_m(2).
$$
These are pairwise nonisomorphic finite simple groups with $|\Out(T_m)|\leq2$ \cite{Atlas}, and
\begin{equation}\label{eq:Tm-order}
|T_m|=2^{m^2}\prod_{j=1}^m(1-2^{-j}) \geq \eta_2(\infty)2^{m^2}.
\end{equation}
The Poisson mean in \eqref{eq:simple-Poisson-PFG} consequently satisfies
\begin{equation}\label{eq:Tm-mean}
\lambda_{T_m,u}=\frac{|T_m|^{-u-1}}{|\Out(T_m)|} \geq \frac{\eta_2(\infty)}2\,2^{m^2}.
\end{equation}
Fix $c\geq1$. For all sufficiently large $m$, $2^{cm}\leq\lambda_{T_m,u}/2$. Applying Markov's inequality to $2^{-Q_{T_m}}$ gives
\begin{equation}\label{eq:poisson-tail}
\mu_u\{Q_{T_m}\leq2^{cm}\} \leq \mu_u\{Q_{T_m}\leq\lambda_{T_m,u}/2\} \leq \exp(-\lambda_{T_m,u}/8).
\end{equation}
The right-hand side is summable in $m$. Hence, almost surely,
\begin{equation}\label{eq:QTm-large}
Q_{T_m}(\Gp)>2^{cm}
\end{equation}
for all sufficiently large $m$. Intersecting over integral $c$ makes this simultaneous for every fixed $c$.

The natural action of $T_m=\GL_m(2)$ on the nonzero vectors of $\F_2^m$ is faithful and transitive, and hence
$$
l(T_m) \leq 2^m-1.
$$
Together with \eqref{eq:kappa-equals-Q}, \eqref{eq:QTm-large} violates \eqref{eq:JZP-criterion} for every exponent. Thus $\Gp$ is not PFG.

The same groups prove failure of UBERG. The natural representation of $T_m=\GL_m(2)$ on $\F_2^m$ is faithful and irreducible. Choose one epimorphism $\Gp\twoheadrightarrow T_m$ for each kernel counted by $Q_{T_m}(\Gp)$, and compose it with this representation. The resulting representations have distinct kernels and hence are pairwise nonisomorphic. Thus
$$
r_m(\Gp,2)\geq Q_{T_m}(\Gp).
$$
Equation \eqref{eq:QTm-large} contradicts every bound $r_m(\Gp,2)\leq2^{cm}$, so $\Gp$ does not have UBERG.

By Theorem \ref{thm:finite-presentation}, $\Gp$ is finitely presented almost surely. PFR is equivalent to polynomial minimal-extension growth, and for finitely presented groups it is also equivalent to UBERG \cite[Theorems 3.9 and 5.6]{KionkeVannacci}. PFG implies UBERG \cite[Corollary 1.11]{CorobCookVannacci}. Together with the characterization of positive finite presentation above, these implications prove the remaining assertions.

\end{proof}

Theorem \ref{thm:LW-PFG} implies in particular Theorem \ref{thm:intro-PFG}.

\end{document}